\numberwithin{equation}{section}
\theoremstyle{plain}
\newtheorem{Th}{Theorem}[section]
\newtheorem{Lemma}[Th]{Lemma}
\newtheorem{Prop}[Th]{Proposition}
\DeclareMathOperator{\R}{\mathbb{R}}
\DeclareMathOperator{\N}{\mathbb{N}}
\DeclareMathOperator{\di}{\text{div}}
\DeclareMathOperator{\supp}{\text{supp}}
\DeclareMathOperator{\dist}{\text{dist}}
\DeclareMathOperator{\E}{\alpha}
\theoremstyle{definition}
\newtheorem{Corollary}[Th]{Corollary}
\newtheorem{Rem}[Th]{Remark}
\newtheorem{?}[Th]{Problem}
\newcommand{\norm}[2]{\left\lVert #1 \right\rVert_{#2}}
\newcommand{\f}[2]{\frac{#1}{#2}}
\newcommand{\eps}{\varepsilon}
\begin{document}

\title[Biharmonic wave maps]{Biharmonic wave maps: Local wellposedness in high regularity}

\author[S.~Herr, T.~Lamm, T.~Schmid, R.~Schnaubelt]{Sebastian Herr, Tobias Lamm, Tobias Schmid and Roland Schnaubelt}

\address{Fakult\"at f\"ur Mathematik\\ Universit\"at Bielefeld\\Postfach 10 01 31 \\ 33501 Bielefeld  \\ Germany}
\email{herr@math.uni-bielefeld.de}

\address{Department of Mathematics \\ Karlsruhe Institute of
  Technology\\ Englerstra{\ss}e 2\\ 76131 Karlsruhe \\ Germany}
\email{tobias.lamm@kit.edu\\tobias.schmid@kit.edu\\roland.schnaubelt@kit.edu}

\thanks{TL, TS and RS gratefully acknowledge financial support by the Deutsche Forschungsgemeinschaft (DFG)
through CRC 1173.}


\date{\today}


\begin{abstract} 
We show the local wellposedness  of biharmonic wave maps with initial data of sufficiently high Sobolev regularity and
 a blow-up criterion in the sup-norm of the gradient of the solutions. In contrast to the wave maps equation we use 
a vanishing viscosity argument and an appropriate parabolic regularization in order to obtain the existence result. The geometric nature 
of the equation is exploited to prove convergence of approximate solutions, uniqueness of the limit, and continuous 
dependence on initial data.
\end{abstract}

\maketitle

\section{Introduction}
Let $ (N, g)$ be a smooth Riemannian manifold which we assume to be isometrically embedded into some Euclidean space $\R^L$. 
Biharmonic wave maps are critical points $ u : \R^n \times [0, T) \to N$ of the (extrinsic) action functional
\begin{equation}\label{action}
\Phi(u) = \frac12 \int_0^T \int_{\R^n} |\partial_t u|^2 - | \Delta u |^2 \,dx\,ds.
\end{equation} 
These maps model the movement of a thin, stiff, elastic object within the target manifold $N$.

The Euler-Lagrange equation of $\Phi$ has been calculated in \cite{LammSchnaubeltHerr} (in the case $N=S^l\subset \R^{l+1}$) and in \cite{schmid} 
(for arbitrary $N$) and it is given by
 \begin{align}\label{EL-condition}
 \partial_t^2u + \Delta^2u \perp T_u N \qquad \text{on } \R^n \times [0, T).
 \end{align}
In particular, if the manifold $N$ has non-vanishing curvature, the
condition \eqref{EL-condition} is rewritten as a nonlinear partial
differential equation
\begin{align}\label{equation-intro}
\partial_t^2 u + \Delta^2 u = \mathcal{N}(u, u_t, \nabla u, \nabla^2 u, \nabla^3 u ),
\end{align}
where $\mathcal{N}$ is a nonlinear expression of the indicated  derivatives of $u$. It is explicitely given in \eqref{expansion}. 
We also note that the following energy 
$$ E(u) = \f{1}{2} \int_{\R^n} |\partial_tu|^2 + |\Delta u|^2 ~dx $$
is (formally) conserved up to the existence time. 

In the flat case $ N = \R^L$ (or any affine subspace), the condition  \eqref{EL-condition} reduces to the free evolution of a system 
of decoupled (or linearly coupled) biharmonic wave equations
\begin{align}\label{equation-intro1}
\partial_t^2u + \Delta^2u = 0,
\end{align}
which appear in the elasticity theory of vibrating plates. Here,
requiring the parametrization of a \emph{thin} plate, the bending
energy of the elastic plate involves integrated curvature terms of the
plate's surface. Hence, in the case of \emph{sufficiently stiff}
material, the potential energy in \eqref{action} is a reasonable
approximation of the elastic energy. We refer to the classical book of
Courant and Hilbert \cite[Chapter 5.6]{courant_hilbert} for more information.

Semi-linear evolution problems related to \eqref{equation-intro1} without a geometric constraint, such as
\begin{align}\label{biharmonic_plate_nonlinear}
\partial_t^2u + \Delta^2u + m u + |u|^{p-1} u = 0,
\end{align}
have been thoroughly studied. For instance, if $ m > 0 $ and $ 1 +
\f{8}{n} <  p < \f{n+4}{n-4} $, global existence and scattering of
solutions of \eqref{biharmonic_plate_nonlinear} has been proved by
Pausader in \cite{pausader}, as conjectured by Levandosky and Strauss.

A well-studied hyperbolic geometric evolution problem is the wave maps equation
\begin{align}\label{wave_map}
\square  u = m^{\alpha \beta} A(u)(\partial_{\alpha} u, \partial_{\beta} u)   \qquad \text{on } \R^n\times\R,
\end{align}
which arises as the Euler-Lagrange equation of a first order analogue of the action  \eqref{action} with constraint $ u \in N $. Here,
$ \square = \partial_t^2 - \Delta $ is the d'Alembert operator, $m$ is the Minkowski metric  and $ A(u) $ is the second fundamental 
form of the embedded manifold $N$. The Cauchy problem for \eqref{wave_map} has been studied intensively
as a model for the subtle interplay of nonlinear dispersion, gauge
invariance and singularity formation. In particular, we refer to the
global regularity theory achieved by novel renormalization techniques
of Tao in \cite{tao1} and \cite{tao2}, see also the survey article by
Tataru \cite{tataru}. In the energy-critical dimension $n=2$ a proof of the
threshold conjecture on the question of blow-up versus global
regularity and scattering is given by Sterbenz and Tataru in \cite{sterbenz-tataru}.  

A different but related model is the Schr\"odinger maps problem for a map  $ u : \R^n \times \R \to N $ into a K\"ahler manifold $N$. This is the Hamiltonian flow for the Dirichlet energy of $u$ induced by the (symplectic) K\"ahler form on $N$. For $ N = S^{2} $ the Hamiltonian equation reads as 
\begin{align}
\partial_t u = u \times \Delta u \qquad  \text{on } \R^n \times \R,
\end{align}
and attracted a lot of attention in the past decades. We refer to the
global regularity results for $ N = S^2$ and $ n \geq 2$ by Bejenaru,
Ionescu, Kenig and Tataru in
\cite{bejenaru_et_al}  and for homogeneous spaces $N$ and large
dimension by Nahmod, Stefanov and Uhlenbeck in \cite{nahmod_et_al}. While different, the
methods in both cases exploit the geometric nature of the Schr\"odinger maps flow by the choice of a suitable frame system along a solution.

\smallskip

In sharp contrast, there is very little literature on the bi-harmonic wave maps \eqref{EL-condition}, as discussed below. 
The main goal of this paper is the proof
of the following local wellposedness result  for the Cauchy problem corresponding to \eqref{EL-condition} 
in Sobolev spaces with sufficiently high regularity. We stress that  it is difficult to employ the energy method for high regularity solutions 
of \eqref{EL-condition} since   $ \mathcal{N}$ explicitly depends on the third order derivatives $ \nabla_x^3 u$ and 
the energy contains only $\Delta u$. We will overcome this difficulty by exploiting the geometric constraints of solutions. From now on, 
let $ N $ be a compact Riemannian manifold,  isometrically embedded into $ \R^L$.

\begin{Th}\label{maintheorem}
	Let $ u_0, u_1 : \R^n \to \R^L$ satisfy $u_0(x) \in N$ and $u_1(x) \in T_{u_0(x)}N$ for a.e.\ $x \in \R^n $ as well as
	$$ (\nabla u_0, u_1) \in H^{k-1}(\R^n) \times H^{k-2}(\R^n) $$ 
	for some $ k \in \N$ with $k > \lfloor \f{n}{2} \rfloor + 2$. Then the following assertions hold:
\begin{itemize}
	\item[(a)]
	There exists a maximal existence time 
	$$T_m =T_m(u_0,u_1) > T = T(\norm{\nabla u_0}{H^{k-1}},\norm{u_1}{H^{k-2}}) >0$$
	 and a unique solution $ u : \R^n \times [0,T_m) \to N $ 
	of \eqref{EL-condition} with $u(0)=u_0$, $\partial_t u(0)=u_1$, and
	$$ u - u_0 \in C^0([0,T_m), H^k (\R^n)) \cap C^1([0,T_m), H^{k-2}(\R^n)).$$
	\item[(b)] For $   T_0 \in (0, T_m)$ there exists a (sufficiently small) radius $ R_0 >0$ such that for all initial data 
	$(v_0, v_1)$ as above that satisfy
	$$ \norm{(u_0, u_1) - (v_0, v_1) }{H^k(\R^n)\times H^{k-2}(\R^n) }  \leq  R_0, $$ 
	the unique solution $ v : \R^n \times[0, T_m(v_0,v_1))\to N $ exists on $  \R^n \times [0, T_0].$ Further, for such initial data
	the map $(v_0,v_1) \to (v(t),\partial_t v(t))$ is continuous in $ H^k(\R^n)\times H^{k-2}(\R^n)$ for $t\in[0,T_0]$.
	\item[(c)] If $T_m<\infty$, then 
\begin{align}
\int_0^{T_m} \norm{\nabla u(s)}{L^{\infty}}^{2k} + \norm{ u_t(s)}{L^{\infty}}^{2k}\,ds = \infty. \label{blowup}
\end{align}
\end{itemize}
In particular, for smooth initial data $ u_0, u_1 : \R^n \to \R^L$ with $u_0(x) \in N$ and $u_1(x) \in T_{u_0(x)}N$ for $x\in\R^n$ having compact 
$ \supp(\nabla u_0) \subset \R^n$ and $\supp(u_1)\subset \R^n$, there exist $ T_m > 0 $ and a smooth solution $ u : \R^n \times  [0, T_m) \to N $ 
of \eqref{EL-condition}.
\end{Th}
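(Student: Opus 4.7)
The plan is to prove Theorem~\ref{maintheorem} by a vanishing-viscosity approach in which an auxiliary parabolic regularization provides, for each $\varepsilon>0$, approximate solutions in high regularity; uniform-in-$\varepsilon$ estimates that crucially exploit the geometric constraint $u(x,t)\in N$ then pass to a solution of \eqref{EL-condition} on a common time interval.

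First I would introduce the regularized problem
$$ \partial_t^2 u^\varepsilon + \varepsilon \Delta^2 \partial_t u^\varepsilon + \Delta^2 u^\varepsilon \perp T_{u^\varepsilon} N, \qquad u^\varepsilon(0)=u_0, \ \partial_t u^\varepsilon(0)=u_1, $$
whose strong damping term makes the extrinsic formulation parabolic: the matrix operator $(u,v)\mapsto (v,-\Delta^2 u -\varepsilon\Delta^2 v)$ generates an analytic semigroup on $H^k\times H^{k-2}$. A Picard iteration against the extrinsic nonlinearity then yields local existence of a unique $u^\varepsilon$ with $u^\varepsilon - u_0 \in C([0,T_\varepsilon),H^k)\cap C^1([0,T_\varepsilon),H^{k-2})$, and a calculation of $\partial_t \dist(u^\varepsilon(x,\cdot),N)^2$ combined with the compatibility $u_1\in T_{u_0}N$ shows that the manifold constraint is preserved on the interval of existence. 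For each fixed $\varepsilon>0$ the damping term lets one trade derivatives in the energy estimate and continue $u^\varepsilon$ as long as suitable norms are finite.

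The main obstacle is deriving $\varepsilon$-uniform a priori bounds on a time interval depending only on the initial norms. With the natural energy
$$ E_k(t) = \tfrac12 \sum_{|\alpha|\le k-2}\int_{\R^n} |\partial^\alpha \partial_t u^\varepsilon|^2 + |\partial^\alpha \Delta u^\varepsilon|^2 \,dx, $$
the term $\varepsilon\Delta^2\partial_t u^\varepsilon$ has the correct sign, while the nonlinearity $\mathcal{N}$ produces a seemingly critical top-order contribution involving $\nabla^3 u^\varepsilon$ which is not controlled by $E_k$ alone and carries no $\varepsilon$-gain. Here the geometric structure enters decisively: writing \eqref{EL-condition} in normal form, differentiating the constraint $u^\varepsilon\in N$ in space and time, and using that $\partial_t u^\varepsilon$ lies in $T_{u^\varepsilon}N$ so that the energy currents test against tangential fields, the offending $\nabla^3 u^\varepsilon$-terms can be re-expressed after integration by parts as pairings of $\nabla^2 u^\varepsilon$ with products dominated by $E_k$ together with $\norm{\nabla u^\varepsilon}{L^\infty}$ and $\norm{\partial_t u^\varepsilon}{L^\infty}$. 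Moser-type product inequalities and the embedding $H^{k-1}\hookrightarrow L^\infty$ (valid since $k-1>\lfloor n/2\rfloor+1$) then yield
$$ \f{d}{dt} E_k \le C \bigl(1 + \norm{\nabla u^\varepsilon}{L^\infty}^{2k} + \norm{\partial_t u^\varepsilon}{L^\infty}^{2k}\bigr) E_k, $$
and Gronwall combined with Sobolev embedding closes the bootstrap on some $[0,T]$ with $T=T(\norm{\nabla u_0}{H^{k-1}},\norm{u_1}{H^{k-2}})$ independent of $\varepsilon$.

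With uniform bounds in hand, Aubin--Lions compactness extracts a limit $u^\varepsilon\to u$ in a suitable topology, and the limit satisfies \eqref{EL-condition} with $u(x,t)\in N$, proving (a). Uniqueness is obtained at low regularity: for two solutions $u,v$ the difference $w=u-v$ satisfies a linear equation whose top-order terms are again converted to tangential form via the geometric constraint, yielding a Lipschitz bound $\f{d}{dt}\norm{w}{L^2}^2 \le C\norm{w}{L^2}^2$ with constants depending on high norms of $u,v$, so $w\equiv 0$. Continuous dependence in $H^k\times H^{k-2}$ then follows by a Bona--Smith-type argument combining uniqueness, the uniform bounds and smoothing of initial data, proving (b). The blow-up criterion (c) is a direct consequence of the Gronwall estimate derived above: $E_k$ can only become infinite at $T_m$ if the integral in \eqref{blowup} diverges. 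The final smooth-data statement follows from applying (a) at every $k\in\N$ together with Sobolev embedding, using that finite-energy initial perturbations with $\supp(\nabla u_0),\supp(u_1)$ compact lie in every $H^k$.
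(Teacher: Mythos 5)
Your overall strategy is the same as the paper's: an artificial parabolic damping solved by semigroup/fixed-point methods, preservation of the constraint via the nearest-point projection, $\varepsilon$-uniform energy estimates that exploit $\mathcal{N}(u)=(I-P_u)\mathcal{N}(u)$ and $u_t\in T_uN$ together with Moser estimates and Gronwall, compactness to pass to the limit, a Bona--Smith argument for continuous dependence, and the blow-up criterion read off from the Gronwall inequality. The one step that is a genuine gap as written is uniqueness. You assert a closed bound $\f{d}{dt}\norm{w}{L^2}^2\le C\norm{w}{L^2}^2$ for $w=u-v$, but the difference $\mathcal{N}(u)-\mathcal{N}(v)$ contains $dP_v(\nabla^3 w\star\nabla v)$ and, after the geometric splitting, factors $P_u-P_v$ that must be estimated through $\norm{w}{L^{\infty}}$; testing with $w_t$ and integrating by parts still leaves $\nabla^2 w$, $\nabla w_t$ and $\norm{w}{L^{\infty}}$, none of which is controlled by $\norm{w}{L^2}$ (nor, for $n\ge 4$, by $\norm{w_t}{L^2}+\norm{\Delta w}{L^2}$). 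This loss of derivatives is exactly the difficulty the paper emphasizes (no Lipschitz estimate in $H^k\times H^{k-2}$ is available, which is why no fixed point is run on the limit equation); its uniqueness proof in Section~\ref{uni} instead propagates $\mathcal{E}(t)=\norm{w}{H^{k-1}}^2+\norm{w_t}{H^{k-3}}^2$, using the identity $(I-P_v)w_t=(P_u-P_v)u_t$ and integration by parts at top order, a separate low-order estimate, and interpolation. Your uniqueness sketch must be reformulated at that level; a pure $L^2$ contraction does not close.

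Two further points need attention, though they are repairable within your scheme. You regularize with $\varepsilon\Delta^2\partial_t$ rather than the paper's $-\varepsilon\Delta\partial_t$; then the tangential defect $\varepsilon(I-P_u)(\Delta^2\partial_t u)$ contains terms of the type $\nabla^4u\star u_t$ and $\nabla u\star\nabla^3u_t$, i.e.\ more derivatives than the energy controls, so the $\varepsilon$-uniform estimate requires additional integrations by parts and absorption into the (stronger) dissipation $\varepsilon\norm{\nabla^{k}u_t}{L^2}^2$ — this should be verified, whereas with the weaker damping the defect \eqref{eq:Nep} is of low order and the issue disappears. Finally, for the closing smooth-data statement, applying part (a) for every $k$ only gives $k$-dependent existence times; one needs the blow-up criterion (c) (or a persistence-of-regularity argument) to conclude that these times coincide, so that the solution is smooth on a common interval $[0,T_m)$.
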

It is worthwhile to remark that both $u_0$ and $u(t)$ do not necessarily belong to $L^2(\R^n)$ and it is only the difference of 
these two functions which is  contained in this space. We further mention that the lower bound $ k > \lfloor \f{n}{2} \rfloor + 2 $ ensures the existence of $L^{\infty} $ bounds for $ \partial_t u \in H^{k-2}(\R^n) $ from Sobolev's embedding. This is necessary in order to establish our energy estimates in the following sections. 

The first, second and fourth author have recently shown in \cite{LammSchnaubeltHerr} that there exists a global weak solution 
of \eqref{EL-condition} for initial data in the energy space $H^2\times L^2$ in the case $N=S^l\subset \R^{l+1}$. In \cite{LammSchnaubeltHerr} 
a crucial ingredient is a conservation law which allows to construct the desired solution as a weak limit of a sequence of solutions 
of suitably regularized problems. The derivation of this conservation law relies on the fact that the action functional $\Phi$ 
is invariant under rotations in the highly symmetric setting $N=S^l$, and this argument does not apply to arbitrary target manifolds $N$.

Moreover, the third author has shown energy estimates for biharmonic wave maps in low dimensions $n=1,2$ in \cite{schmid}. 
When combining this result with the above blow-up criterion \eqref{blowup}, he then obtained the 
existence of a unique global smooth solution of \eqref{EL-condition} for smooth and compactly supported initial data. This results extends 
earlier work of Fan and Ozawa \cite{fan2010regularity} for spherical target manifolds.

A local well-posedness result as in Theorem \ref{maintheorem}  is standard for second-order wave equations with derivative nonlinearities
such as wave maps. It can be found for example in the books of Shatah and Struwe \cite{shatahstruwe} and Sogge \cite{sogge}. 
In contrast to this case, our nonlinearity $\mathcal{N}(u)$ depends on the third spatial derivative of $u$ which cannot directly be controlled
by  the energy of \eqref{expansion}  that only contains second order spatial derivatives. In our proof we use the geometric nature of the equation 
in several crucial steps 
in order to be able to rewrite this expression in terms of derivatives of lower order.

Concerning the continuous dependence of the solution on the initial data, as the nonlinearity $ \mathcal{N}(u)$ depends 
on third spatial derivatives, no Lipschitz estimate 
in the norm $H^k \times H^{k-2}$ is expected from the energy method
(as we observe e.g.\ from the a priori estimates in Section \ref{uni})
and we cannot apply a fixed point argument. 
In comparison to semi-linear wave equations with derivative
nonlinearities (such as wave maps), this makes the well-posedness
problem for \eqref{EL-condition} more involved.

We briefly note that our result applies to an intrinsic version of a biharmonic wave map.
The functional $ \Phi$ has an intrinsic analogue $ \Psi$ defined by
\begin{equation}\label{intrinsic-psi} \Psi(u) = \f{1}{2} \int_0^T
  \int_{\R^n} | \partial_t u |^2 - |(\Delta u)^T|^2\,dx\,ds,
\end{equation}
where $ (\Delta u)^T = P_u( \Delta u) $ is the tension field of a smooth function $ u : \R^n \times [0, T)\to N $. In contrast to $ \Phi$, 
the functional $ \Psi$ is independent of the embedding of $N
\hookrightarrow \R^L$.
Since the Euler-Lagrange equation differs only by lower order terms
(see \eqref{intrinsic} in Section \ref{prelim} below),
we can prove the existence of local unique intrinsic biharmonic wave maps with initial data as in Theorem \ref{maintheorem}. 
However, we do not have a result for initial data with (only) \textit{covariant} derivatives in $L^2$.

In the following, we briefly outline the structure of the paper. In Section \ref{ex}, we use a vanishing viscosity 
approximation and solve the corresponding Cauchy  problem for the damped problem
$$ \partial_t^2u + \Delta^2u - \varepsilon \Delta \partial_tu \perp T_uN,~~ \varepsilon \in (0, 1]. $$
In order to obtain a limiting solution for \eqref{EL-condition} as $ \varepsilon\searrow 0$, we prove a priori energy estimates 
which are uniform in $\varepsilon$ in Section \ref{uniform-energy}. As a by-product we obtain the blow-up criterion in Theorem \ref{maintheorem}. 
The existence part in Theorem \ref{maintheorem} is then shown in Section \ref{main-theorem-proof}, and in Section \ref{uni} we prove that the 
solutions are unique. Finally we establish the continuity of the flow map  in Section \ref{conti}. 

\section{Notation and preliminaries}\label{prelim}
In this section and in the following we will write $C$ for a generic constant only depending on $N$, $n$ and $k$, and often also $\lesssim\dots$ instead of $\le C\,(\cdots)$.
In order to obtain the explicit form of \eqref{EL-condition}, we use the fact that there exists some $ \delta_0 > 0 $ and a smooth family 
of linear maps $ P_p : \R^L \to  \R^L $ for $ \dist(p, N) < \delta_0$ such that
$$ P_p : \R^L \to T_pN , \quad p \in N, $$
is an orthogonal projection onto the tangent space $ T_pN $.
The Euler-Lagrange equation \eqref{EL-condition} can thus be written as 
\[
\partial_t^2u + \Delta^2u = (I-P_u)(\partial_t^2u + \Delta^2u) .
\]
Exploiting that $u$ takes values in $N$, we have
\begin{align}\label{expansion}
\partial_t^2u + \Delta^2u &= dP_u(u_t,u_t) + dP_u(\Delta u,\Delta u) + 4 dP_u( \nabla u, \nabla \Delta u) + 2 dP_u(\nabla^2u, \nabla^2 u)\\ \nonumber
&\quad+2 d^2P_u(\nabla u, \nabla u, \Delta u)  + 4 d^2P_u( \nabla u, \nabla u, \nabla^2 u )\\ \nonumber
&\quad+ d^3P_u(\nabla u, \nabla u, \nabla u, \nabla u)\\ \nonumber 
&=: \mathcal{N}(u),
\end{align}
where the tensors $d^jP$ are explicitly described below.

We briefly remark that, compared with the right hand side of \eqref{expansion},
the Euler-Lagrange equation for the intrinsic biharmonic wave maps
problem \eqref{intrinsic-psi} differs by 
\begin{align}\label{intrinsic}
P_u( dP_u(\nabla u, \nabla u) \cdot d^2P_u( \nabla u, \nabla u, \cdot) ) + P_u( \di(dP_u(\nabla u, \nabla u)\cdot dP_u(\nabla u, \cdot))).
\end{align}

The projectors $P_p$ are derivatives of the metric distance (with respect to $N$) in $\R^L$, i.e.,
\begin{align}\label{projectors}
p = \pi(p) + \f{1}{2} \nabla_p ( \dist^2(p, N)),~~ P_p = d_p \pi(p),~~~\dist(p,N) < \delta_0.
\end{align}
Moreover, if $ p \in \R^L $ is sufficiently close to $N$, then $\pi$ has the nearest point property, i.e.,
 $ |\pi(p) - p| = \inf_{q \in N} | q - p|, $ and hence
\begin{align*}
d\pi_{|_p} = d\pi(p) = d(\pi^2 (p)) = d\pi_{|_{\pi(p)}}d\pi_{|_p}.
\end{align*}
Therefore $ P_p : \R^L \to T_{\pi(p)}N $ is well-defined. Using cut-off functions we extend the identity \eqref{projectors}, and thus 
also the equation $P_p = d_p \pi(p)$, to all of $ \R^L$.  Moreover, all derivatives of $P_p$ are bounded on $\R^n$.
In this way one can investigate  \eqref{expansion} without  restricting the coefficients a priori. Further, for $ l \in \N_0$ we denote by 
$ d^lP_p$ the derivative of order $ l $ of the map $ P_p $, which is a $ (l+1)$-linear form on $ \R^L$. For the coefficients 
in the standard coordinates in $ \R^L$ we write
$$  (d^jP_u)^k_{ l_0, \dots, l_j} = \f{\partial}{\partial p_{l_1}}\dots \f{\partial}{\partial p_{l_j}}(P_p)^k_{l_0}(u).$$
We now derive \eqref{expansion} from the condition \eqref{EL-condition} for smooth solutions $ u : \R^m \times [0, T) \to N $. Note that we use the sum convention, i.e. the same indices in super-/subscript means summation.\\[2pt]
Since $ \partial_t u \in T_uN $, we infer the identity 
\begin{align*}
[(I - P_u)(\partial_t^2 u)]^k &=  (\delta^k_l - (P_u)^k_l)(\partial_t^2 u^l) =  \partial_t(\delta^k_{l} - (P_u)^k_{l})(\partial_t u^{l}) +   (\partial_mP_u)^k_{l} \partial_t u^{l}\partial_t u^m\\ \nonumber
& =  (dP_u)^k_{m,l} \partial_t u^{l}\partial_t u^{m}
\end{align*}
 for $ k = 1, \dots, L$. Because of $ \nabla u \in T_uN $, we also obtain
\begin{align*}
[(I - P_u)(\Delta u)]^k &=  \partial^{x_{\alpha}}(\delta^k_{l} - (P_u)^k_{l})(\partial_{x_{\alpha}} u^{l}) +  (\partial_mP_u)^k_{l} \partial^{x_{\alpha}}u^{l}\partial_{x_{\alpha}} u^m\\ \nonumber
& =   (dP_u)^k_{m,l} \partial^{x_{\alpha}} u^{l}\partial_{x_{\alpha}} u^{m},
\end{align*}
and hence 
\begin{align*}
[(I - P_u)(\Delta^2 u)]^k &=    \Delta ( (dP_u)^k_{m,l} \partial^{x_{\alpha}} u^{l}\partial_{x_{\alpha}} u^{m}) +  \partial^{x_{\alpha}}  ( (dP_u)^k_{m,l} \Delta u^{l}\partial_{x_{\alpha}} u^{m})\\
&~~~ +    (dP_u)^k_{m,l} (\partial^{x_{\alpha}}\Delta u^{l})\partial_{x_{\alpha}} u^{m}.
\end{align*}
The symmetry of the indices then implies
\begin{align*}
[(I - P_u)(\Delta^2 u)]^k &= (d^3P_u)^k_{ l_0, l_1, l_2, l_3}\partial^{x_{\alpha}} u^{l_0} \partial_{x_{\alpha}} u^{l_1} \partial^{x_{\beta}} u^{l_2} \partial_{x_{\beta}} u^{l_3}\\
&~~~ + 2  (dP_u)^k_{ l_0, l_1}  \partial_{x_{\alpha} } \partial^{x_{\beta}} u^{l_0} \partial^{x_{\alpha}} \partial_{ x_{\beta}} u^{l_1} +  (dP_u)^k_{l_0, l_1}  \Delta  u^{l_0} \Delta u^{l_1}\\
&~~~  + 2  (d^2P_u)^k_{l_0, l_1, l_2}  \partial^{x_{\alpha}} u^{l_0} \partial_{x_{\alpha}} u^{l_1} \Delta u^{l_2} + 4   (dP_u)^k_{l_0, l_1}  \partial^{x_{\alpha}}\Delta  u^{l_0} \partial_{x_{\alpha}} u^{l_1}\\
&~~~ + 4  (d^2P_u)^k_{l_0, l_1, l_2}  \partial^{x_{\alpha}}  u^{l_0} \partial_{x_{\alpha}}\partial^{x_{\beta}} u^{l_1}  \partial_{x_{\beta}} u^{l_2}.
\end{align*}
We briefly state the expressions from \eqref{intrinsic} in coordinates, i.e.,
\begin{align*}
\left[P_u(dP_u(\nabla u, \nabla u) \cdot d^2P_u( \nabla u, \nabla u, \cdot))\right]^l 
 &=  \sum\nolimits_j (P_u)^l_j dP_u(\nabla u, \nabla u) \cdot (d^2P_u)_{k, m ,j} \partial_{x_{\alpha}} u^k \partial^{x_{\alpha}} u^m,\\
\left[P_u( \di(dP_u(\nabla u, \nabla u)\cdot dP_u(\nabla u, \cdot)))\right]^l 
   &= \sum\nolimits_{j} (P_u)^l_j \partial^{x_{\alpha}}( dP_u(\nabla u, \nabla u) \cdot (dP_u)_{k j} ~\partial_{x_{\alpha}} u^k )
\end{align*}
 for $ l = 1 , \dots , L $.
In the following we use the shorthand $ \nabla^{k_1} u \star  \nabla^{k_2}u $ for (linear combinations of) products of partial derivatives 
of the components $u^l$ of $u$ for $l = 1, \dots, L$. Here the partial derivatives are of order $ k_1 \in \N$ and $ k_2 \in \N $, 
respectively. With this notation we can rewrite equation \eqref{expansion} as
\begin{align*}
\partial_{t}^2 u + \Delta^2u &= dP_u( u_t, u_t) + dP_u(\nabla^2 u \star \nabla^2 u) + dP_u(\nabla^3 u \star \nabla u )\\
&~~~ + d^2P_u(\nabla u \star \nabla u \star \nabla^2 u)  + d^3P_u(\nabla u \star \nabla u \star \nabla u \star \nabla u).
\end{align*}
The Leibniz formula implies the following identity.
\begin{Lemma}\label{Lemma-Leibniz}
	For $ m \in \N$ and $ l \in \N_0$ we have
	\begin{equation} \label{eq:leibniz}
	\nabla^m (d^lP_u) = \sum_{j = 1}^m \sum_{\sum_{k=1}^j m_k = m -j} d^{j+l}P_u(\nabla^{m_1 +1}u \star \dots \star \nabla^{m_j +  1}u).
	\end{equation}
\end{Lemma}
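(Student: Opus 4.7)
The plan is to proceed by induction on $m \in \N$, keeping $l \in \N_0$ arbitrary at each stage so that the inductive hypothesis is available with the higher derivative $d^{l+1}P_u$ required in the step. Because the notation $\nabla^{k_1}u \star \nabla^{k_2}u$ already suppresses combinatorial constants and the precise contraction pattern, the identity \eqref{eq:leibniz} should be read as a purely structural statement about which multilinear tensors applied to which derivatives of $u$ can appear; no Fa\`a di Bruno coefficients need to be tracked.

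For the base case $m=1$, the chain rule applied to $x \mapsto d^lP_u(x)$ gives $\nabla(d^lP_u) = d^{l+1}P_u(\nabla u)$, which is exactly the single term with $j=1$ and $m_1 = 0$ on the right hand side.

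For the inductive step, assume \eqref{eq:leibniz} at order $m$ for every $l \in \N_0$, and apply $\nabla$ to both sides. Differentiating a typical summand $d^{j+l}P_u(\nabla^{m_1+1}u \star \cdots \star \nabla^{m_j+1}u)$ by the Leibniz rule (together with the chain rule on $d^{j+l}P_u$) produces two kinds of contributions. First, differentiating the factor $d^{j+l}P_u$ gives a term of the form $d^{j+l+1}P_u(\nabla u \star \nabla^{m_1+1}u \star \cdots \star \nabla^{m_j+1}u)$, which has $j+1$ factors whose orders sum to $(m-j) + 0 = (m+1)-(j+1)$; these exhaust the tuples in \eqref{eq:leibniz} at level $m+1$ with exactly $j+1$ factors. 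Second, differentiating one of the factors $\nabla^{m_i+1}u$ gives a term of the form $d^{j+l}P_u(\nabla^{m_1+1}u \star \cdots \star \nabla^{m_i+2}u \star \cdots \star \nabla^{m_j+1}u)$, with $j$ factors whose orders now sum to $(m-j)+1 = (m+1)-j$; these exhaust the tuples at level $m+1$ with exactly $j$ factors. Summing over $j = 1,\dots,m$ and over the multi-indices $(m_1,\dots,m_j)$ appearing at level $m$ thus covers all tuples $(m_1,\dots,m_{j'})$ with $\sum m_k = (m+1)-j'$ for $j' = 1,\dots,m+1$, which is exactly \eqref{eq:leibniz} at order $m+1$.

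The only subtle point is bookkeeping: distinct summands at level $m$ may, after differentiation, yield the same structural term at level $m+1$ with nontrivial multiplicity, but these multiplicities are absorbed by the $\star$-notation, which records only the list of derivative orders present. Consequently the proof is almost entirely combinatorial once the chain and product rules are written out, and no analytic difficulty arises beyond verifying that every derivative of $u$ produced by the Leibniz rule fits one of the two patterns above.
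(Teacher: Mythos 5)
Your proof is correct and matches the paper's intent: the paper states Lemma \ref{Lemma-Leibniz} as an immediate consequence of the Leibniz (and chain) rule without further detail, and your induction on $m$ with $l$ arbitrary is exactly the routine argument being invoked. The only cosmetic remark is that, since the $\star$-notation only asserts that each term is a linear combination of the indicated forms, the soundness half of your inductive step already suffices, so the ``exhaustiveness'' bookkeeping is not needed.
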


In order to include the case $ m = 0 $ in the lemma, we will use $ \sum_{j = \min\{ 1, m\}}^m $ for the sum 
in \eqref{eq:leibniz} or similar  formulas.  The calculation of derivatives $ \nabla^m(\mathcal{N}(u))$ and 
$ \nabla^m( \mathcal{N}(u) - \mathcal{N}(v))$ for sufficiently regular  $ u,v : \R^n \times [0, T] \to \R^L$ and $ m \in \N_0 $ 
has been included in Appendix \ref{section-appendix}, employing the $ \star$-convention. The results from Appendix \ref{section-appendix} 
will be used frequently throughout the paper. In the following sections, we also need a version of the classical Moser estimate, 
see e.g.\ \cite[Chapter 13]{taylor3}. 
\begin{Lemma}\label{Moser-estimate}
	Let $ l,k \in \N$ and $\alpha_1 ,\dots, \alpha_l \in \N_0^n$ satisfy $\sum_{i =1}^l  | \alpha_i | = k$. 
	There exists  $ C > 0 $ such that for all  $ f_1, \dots, f_l \in C_0(\R^n)\cap H^k(\R^n)$ we have
	\begin{align}
	\norm{D^{\alpha_1}f_1 \cdot \dots \cdot D^{\alpha_l}f_l}{L^2} 
	    \leq C \prod_{ i = 1}^l \norm{f_i}{L^{\infty}}^{1 - \f{|\alpha_i|}{k}}  \norm{f_i}{H^k}^{\f{|\alpha_i|}{k}}.
	\end{align}
	In particular,
	\begin{align}
	\norm{D^{\alpha_1}f_1 \cdot \dots \cdot D^{\alpha_l}f_l}{L^2} 
	   \leq C \sum_{j= 1 }^l \prod_{ i \neq j}^l \norm{f_i}{L^{\infty}} \left( \norm{f_1}{H^k} + \dots + \norm{f_l}{H^k}\right).
	\end{align}
\end{Lemma}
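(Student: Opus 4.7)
The plan is to reduce the estimate to the Gagliardo–Nirenberg interpolation inequality applied factor by factor. First I would apply H\"older's inequality with exponents $p_i = 2k/|\alpha_i|$ whenever $|\alpha_i| > 0$, and $p_i = \infty$ when $|\alpha_i| = 0$. Since $\sum_i |\alpha_i| = k$, one has $\sum_{i: |\alpha_i| > 0} 1/p_i = \f{1}{2}$, so
\[
\norm{D^{\alpha_1}f_1 \cdots D^{\alpha_l}f_l}{L^2} \leq \prod_{i=1}^l \norm{D^{\alpha_i} f_i}{L^{p_i}}.
\]

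Next, for each index $i$ with $|\alpha_i| > 0$, I would invoke the Gagliardo–Nirenberg inequality in the form
\[
\norm{D^{\alpha_i} f_i}{L^{2k/|\alpha_i|}} \leq C\, \norm{f_i}{L^\infty}^{1-|\alpha_i|/k}\, \norm{D^k f_i}{L^2}^{|\alpha_i|/k},
\]
which is the endpoint $q=\infty$, $r=2$, $\theta = |\alpha_i|/k$ of the standard interpolation between $L^\infty$ and the $H^k$-seminorm: with $j = |\alpha_i|$ the scaling condition $\f{1}{p} = \f{j}{n} + \theta\bigl(\f{1}{2} - \f{k}{n}\bigr)$ reduces precisely to $\f{1}{p} = \f{j}{2k}$, matching $p_i$. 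For $|\alpha_i| = 0$ the corresponding factor is trivially $\norm{f_i}{L^\infty}$. Bounding $\norm{D^k f_i}{L^2} \leq \norm{f_i}{H^k}$ and multiplying the resulting estimates over $i$ gives the first claimed inequality.

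For the second inequality I would use weighted AM-GM. Writing $a_i := \norm{f_i}{L^\infty}$, $b_i := \norm{f_i}{H^k}$ and $\theta_i := |\alpha_i|/k$ (so that $\sum_i \theta_i = 1$), and discarding the trivial case $a_i=0$ for some $i$ (where $f_i\equiv 0$ and both sides vanish), one has
\[
\prod_{i=1}^l a_i^{1-\theta_i} b_i^{\theta_i} = \Big(\prod_{i=1}^l a_i\Big)\prod_{i=1}^l \Big(\f{b_i}{a_i}\Big)^{\theta_i} \leq \Big(\prod_{i=1}^l a_i\Big)\sum_{i=1}^l \theta_i\, \f{b_i}{a_i} \leq \sum_{j=1}^l b_j \prod_{i\neq j} a_i,
\]
which is dominated by the right-hand side of the stated bound.

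The main obstacle is nothing novel: everything reduces to the classical Gagliardo–Nirenberg interpolation, which is standard (cf.\ Taylor, Chapter 13) and may be quoted without proof; the remainder is H\"older and weighted AM-GM.
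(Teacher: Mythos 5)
Your argument is correct: the paper gives no proof of this lemma and simply cites the classical Moser estimate from Taylor (Chapter 13), whose standard proof is exactly your combination of H\"older with exponents $2k/|\alpha_i|$, the Gagliardo--Nirenberg interpolation $\norm{D^{j}f}{L^{2k/j}}\lesssim \norm{f}{L^\infty}^{1-j/k}\norm{D^k f}{L^2}^{j/k}$, and weighted AM--GM for the second bound. So this is essentially the same (standard) route the paper relies on, and the exponent bookkeeping and the reduction of the second inequality to the first are carried out correctly.
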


\section{Existence for the parabolic approximation}\label{ex}
Since $ \mathcal{N}(u) = \mathcal{N}(u,u_t, \nabla u, \nabla^2 u, \nabla^3 u ),$
 energy estimates for the operator $ \partial_t^2 + \Delta^2 $ are not sufficient to show the existence of a solution of \eqref{expansion}. 
 Instead, we use the damped plate operator 
 $$  \partial_t^2 + \Delta^2 - \varepsilon \Delta \partial_t, $$ 
with $ \varepsilon \in (0, 1] $ fixed, as a regularization. More precisely, we prove the existence of a solution 
$ u^{\varepsilon}: \R^n \times [0, T_{\varepsilon} ] \to N$ of the Cauchy problem
\begin{align}\label{Reg-Cauchy-problem}
\begin{cases}
\partial_t^2u^{\varepsilon}(x,t) + \Delta^2u^{\varepsilon}(x,t) - \varepsilon \Delta \partial_t u^{\varepsilon}(x,t) \perp T_{u^{\varepsilon}(x,t)} N, 
 &  (x,t) \in \R^n\times [0, T_{\varepsilon}], \\[3pt]
u^{\varepsilon}(x,0) = u_0(x),~u^{\varepsilon}_t(0, x) = u_1(x), &  x \in \R^n, 
\end{cases}
\end{align}
where $ u_0, u_1 : \R^n \to \R^L$ satisfy  $u_0(x) \in N$ and $u_1(x) \in T_{u_0(x)}N$ for a.e. $x \in \R^n $ as well as
\[ (\nabla u_0, u_1) \in H^{k-1}(\R^n) \times H^{k-2}(\R^n) \]
for some $ k \in \N$ with $k > \lfloor \f{n}{2} \rfloor + 2 $. In the following we mostly drop the super-/subscript $ \varepsilon$ and write $(u,T)$ 
instead of $ (u^{\varepsilon}, T_{\varepsilon})$. We note that the condition in \eqref{Reg-Cauchy-problem} reads as 
\begin{align}\label{Reg-Cauchy-problem2}
\partial_t^2u + \Delta^2u- \varepsilon \Delta \partial_t u = \mathcal{N}(u) - \varepsilon ( I - P_u)( \Delta \partial_t u).
\end{align}
Using $u(t,x)\in N$, we can expand
\begin{equation}\label{eq:Nep}
\varepsilon ( I -P_u)( \Delta \partial_t u ) =  \varepsilon d^2 P_u ( u_t, \nabla u, \nabla u) + \varepsilon 2 dP_u(\nabla u_t, \nabla u) 
                                               + \varepsilon dP_u( u_t, \Delta u).
\end{equation}
We thus study the regularized problem
\begin{align}\label{Reg-expansion}
\partial_t^2 u + \Delta^2 u - \varepsilon \Delta \partial_tu 
&= \mathcal{N}(u) - \varepsilon d^2 P_u ( u_t, \nabla u, \nabla u) - \varepsilon 2 dP_u(\nabla u_t, \nabla u) - \varepsilon dP_u( u_t, \Delta u)\\ \nonumber
&=: \mathcal{N}_{\varepsilon}(u). 
\end{align}
We next solve \eqref{Reg-expansion}  without the geometric constraint, recalling that only $u(t)-u_0\in L^2(\R^n)$.
\begin{Lemma}\label{Lemma-existence}
	 Let $ \varepsilon \in (0,1) $ and take $ u_0, u_1 : \R^n \to \R^L$ with $u_0(x) \in N$ and $u_1(x) \in T_{u_0(x)}N$ for a.e.\ $x \in \R^n $
	 such that 
	$$ (\nabla u_0, u_1) \in H^{k-1}(\R^n) \times H^{k-2}(\R^n) $$ 
	for some $ k \in \N$ with $k > \lfloor \f{n}{2} \rfloor + 2 $. Then \eqref{Reg-expansion} has a unique local solution 
	$ u : \R^n \times [0, T_\varepsilon] \to \R^L $ satisfying $ u(0) = u_0$, $ u_t(0) = u_1$, and
	\begin{align}
	u - u_0 \in C^0([0,T_\varepsilon], H^k (\R^n)) \cap C^1([0,T_\varepsilon], H^{k-2}(\R^n))\cap  H^1(0,T_\varepsilon; H^{k-1}(\R^n)).
	\end{align}
 In addition,
\begin{align}
	\nabla u \in L^2(0,T_\varepsilon; H^{k}(\R^n))
\end{align}
and there exists a constant $C<\infty$ such that for $ 0 \leq t \leq T_\varepsilon$
\begin{align}\label{Lemma-existence-estimate}
&\norm{\nabla^{k-2}u_t(t)}{L^2}^2+ \norm{\nabla^{k}u(t)}{L^2}^2 + \varepsilon \int_0^t \norm{\nabla^{k-1}u_t(s)}{L^2}^2\,ds+\varepsilon \int_0^t \norm{\nabla^{k+1}u(s)}{L^2}^2\,ds\\\ \nonumber
& \quad\leq C\Big( \int_0^t \int_{\R^n} \nabla^{k-2} (\mathcal{N}_{\varepsilon}(u))\cdot \nabla^{k-2}  u_t\,dx\,ds  + \norm{\nabla u_0}{H^{k-1}}^2 + \norm{u_1}{H^{k-2}}^2\Big).
\end{align}
\end{Lemma}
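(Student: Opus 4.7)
The plan is to solve the regularized Cauchy problem \eqref{Reg-expansion} by a fixed-point argument on its Duhamel formulation, exploiting the parabolic character of the damped plate operator $L_{\varepsilon}:=\partial_t^2+\Delta^2-\varepsilon\Delta\partial_t$ for $\varepsilon\in(0,1]$. Viewed as a first-order system on the phase space $Y_k:=H^k(\R^n)\times H^{k-2}(\R^n)$, the symbol of $L_\varepsilon$ has roots with real part $\sim -\varepsilon|\xi|^2/2$, so the associated semigroup $S_\varepsilon(t)$ is analytic and supplies parabolic smoothing. I would first pass to the equivalent problem for $w:=u-u_0\in H^k$,
\begin{equation*}
\partial_t^2 w+\Delta^2 w-\varepsilon\Delta \partial_t w=\mathcal{N}_{\varepsilon}(u_0+w)-\Delta^2 u_0,\qquad w(0)=0,\quad \partial_t w(0)=u_1,
\end{equation*}
where the time-independent source $-\Delta^2 u_0$ is handled by the smoothing estimates of the analytic semigroup.

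For the fixed point I would work in the Banach space
\begin{equation*}
X_T:=\{w\in C([0,T];H^k)\,:\,\partial_t w\in C([0,T];H^{k-2})\cap L^2(0,T;H^{k-1}),\ \nabla w\in L^2(0,T;H^k)\}
\end{equation*}
whose $L^2_t$ components encode the parabolic smoothing. For $w$ in a closed ball of $X_T$, the Moser estimate of Lemma \ref{Moser-estimate}, combined with the Sobolev embedding $H^{k-2}\hookrightarrow L^{\infty}$ (valid since $k>\lfloor n/2\rfloor+2$) and the boundedness of $P$ and all of its derivatives on $\R^L$, would give $\mathcal{N}_{\varepsilon}(u_0+w)\in L^2(0,T;H^{k-2})$ with norm controlled polynomially in $\|w\|_{X_T}$ and with a $T^{\theta}$-small prefactor, as well as an analogous Lipschitz bound on differences. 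This closes the contraction on some interval $[0,T_\varepsilon]$ and produces the unique local solution $u=u_0+w$.

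For the a priori bound \eqref{Lemma-existence-estimate}, which I would establish on the already-smooth solution, I would combine two energy identities. Applying $\nabla^{k-2}$ to the equation and pairing with $\nabla^{k-2}\partial_t u$ gives the standard identity
\begin{equation*}
\tfrac12 \tfrac{d}{dt}\bigl(\|\nabla^{k-2}\partial_t u\|_{L^2}^2+\|\nabla^k u\|_{L^2}^2\bigr)+\varepsilon\|\nabla^{k-1}\partial_t u\|_{L^2}^2=\int_{\R^n}\nabla^{k-2}\mathcal{N}_\varepsilon(u)\cdot\nabla^{k-2}\partial_t u\,dx,
\end{equation*}
which after integration in time recovers the first three terms on the left-hand side of \eqref{Lemma-existence-estimate}. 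To produce the remaining $\varepsilon\int_0^t\|\nabla^{k+1}u\|^2\,ds$ term I would use the complementary multiplier obtained by pairing $\nabla^{k-1}$ of the equation with $\nabla^{k-1}u$, which yields an auxiliary identity featuring $\|\nabla^{k+1}u\|^2$ at the cost of a sign-indefinite $\|\nabla^{k-1}\partial_t u\|^2$ term. Multiplying this auxiliary identity by a suitable small multiple of $\varepsilon$ and adding to the first identity would let the indefinite term be absorbed into the existing damping integral, while the boundary contribution $\varepsilon\int\nabla^{k-1}\partial_t u\cdot\nabla^{k-1}u$ is bounded via Cauchy--Schwarz in Fourier by $\varepsilon\|\nabla^{k-2}\partial_t u\|_{L^2}\|\nabla^k u\|_{L^2}$, yielding a $t=0$ contribution controlled by $\|u_1\|_{H^{k-2}}\|\nabla u_0\|_{H^{k-1}}$ and a $t=t$ contribution absorbed by the left-hand side.

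The principal difficulty is the loss of derivatives in $\mathcal{N}_\varepsilon$, whose highest-order terms $\nabla^3 u$ and $\nabla\partial_t u$ sit one derivative beyond the pure energy control $(\partial_t u,\nabla^2 u)\in H^{k-2}$. The damping $-\varepsilon\Delta\partial_t u$ is introduced precisely to close this gap, upgrading $\partial_t u$ to $L^2_t H^{k-1}$ and $u$ to $L^2_t H^{k+1}$ and supplying the missing derivative both in the fixed-point estimates and in the absorption steps of the energy argument. Since these gains degenerate as $\varepsilon\to 0$, the existence time $T_\varepsilon$ constructed here is allowed to depend on $\varepsilon$; the $\varepsilon$-uniform estimates needed for the vanishing-viscosity limit are the subject of Section \ref{uniform-energy}.
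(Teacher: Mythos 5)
Your overall strategy (Duhamel fixed point for $v=u-u_0$ using the analytic semigroup of the damped plate operator, then two energy multipliers for \eqref{Lemma-existence-estimate}) is the same skeleton as the paper's proof, and the energy part — pairing $\nabla^{k-2}$ of the equation with $\nabla^{k-2}u_t$, plus an auxiliary multiplier of size $O(\varepsilon)$ to generate $\varepsilon\int\|\nabla^{k+1}u\|_{L^2}^2$ with absorption of the indefinite $\varepsilon\|\nabla^{k-1}u_t\|_{L^2}^2$ and of the mixed boundary term — is essentially the paper's argument (the paper tests with $-\nabla^l\Delta u_t$ and $\varepsilon\nabla^l\Delta^2 u$; your choice of $\varepsilon\nabla^{k-1}u$ is a harmless variant, though you should note that it leaves the extra term $\varepsilon\int_0^t\!\int\nabla^{k-1}\mathcal{N}_\varepsilon(u)\cdot\nabla^{k-1}u\,dx\,ds$ on the right, which is not literally the right-hand side of \eqref{Lemma-existence-estimate} and needs a further reduction).

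The genuine gap is in the contraction step, precisely at the point the paper flags as the main obstruction: since only $\nabla u_0\in H^{k-1}$, the stationary source $-\Delta^2u_0$ lies merely in $H^{k-4}$, so the forcing $f_\varepsilon(U)$ is in $C^0([0,T],H^{k-4})$, and this is \emph{not} enough to run the linear theory (Lemmas \ref{linear}, \ref{linear-energy} with $r=k-3$) and close a fixed point in your space $X_T$. Your claim that this term "is handled by the smoothing estimates of the analytic semigroup" does not work as stated: to land in $H^k\times H^{k-2}$ from a second-component source in $H^{k-4}$ you need a gain of two derivatives, which for this ($\varepsilon$-parabolic, second-order in the smoothing variable) semigroup costs $(t-s)^{-1}$ and is not integrable — it is exactly the borderline case. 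Relatedly, your asserted bound $\mathcal{N}_\varepsilon(u_0+w)\in L^2(0,T;H^{k-2})$ is false in general: the contributions involving only the initial datum, e.g.\ $dP_u(\nabla u_0,\nabla\Delta u_0)$, are only in $H^{k-3}$ (which is, however, all one needs, and is what the paper uses). The paper resolves the $\Delta^2u_0$ problem by regularizing the datum, $u_0^\delta\in C^\infty$ with $\nabla u_0^\delta\to\nabla u_0$ in $H^{k-1}$, running the contraction for each $\delta$ (where $f_{\varepsilon,\delta}(U)\in C^0([0,T],H^{k-3})$), and then passing to the limit $\delta\to0$ via difference estimates on $v^\delta-v^{\delta'}$ on a time interval independent of $\delta$; this approximation-and-limit step (or some substitute exploiting the time-independence of the source, e.g.\ representing its Duhamel contribution through the free evolution of $(u_0,0)$ interpreted via $\nabla u_0\in H^{k-1}$) is missing from your proposal, and without it the fixed-point argument does not close for the given rough data.
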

Before we prove Lemma \ref{Lemma-existence},  we reduce the problem to functions in $L^2$ by setting $ v(x,t) = u(x,t) - u_0(x) $. 
We thus rewrite \eqref{Reg-expansion} as
\begin{align}\label{first-order-system}
\partial_t U + \mathcal{A}_kU = \begin{pmatrix}
0\\
f_{\varepsilon}(U)
\end{pmatrix}, \qquad U(0) = \begin{pmatrix}
0\\
u_1
\end{pmatrix},
\end{align} 
where $ U = \begin{pmatrix}
v\\
v_t
\end{pmatrix} $ and $ f_{\varepsilon}(U) $ is defined through
\begin{align}
f_{\varepsilon}(U) : &= \mathcal{N}( v + u_0)  - \varepsilon d^2 P_{v + u_0} ( v_t, \nabla (v + u_0), \nabla ( v + u_0))\\ \nonumber
&~~~ - \varepsilon 2 dP_{v + u_0}(\nabla v_t, \nabla (v + u_0)) - \varepsilon dP_{v + u_0}( v_t, \Delta ( v + u_0)) - \Delta^2 u_0.
\end{align}
Further the operator $ \mathcal{A}_k : H^k(\R^n) \times H^{k-2}(\R^n) \supseteq \mathcal{D}(\mathcal{A}) \to  H^k(\R^n) \times H^{k-2}(\R^n) $ is 
given by
\begin{align}
\mathcal{A}_k = \begin{pmatrix}
0& -I\\
 \Delta^2 & -\varepsilon \Delta 
\end{pmatrix}, \quad  \mathcal{D}(\mathcal{A}) = H^{k+2}(\R^n) \times H^k(\R^n).
\end{align}
Since the operators $\mathcal{A}_k$ extend each other we drop the subscript $ k $.  It is well known that $-\mathcal{A}$ generates 
an analytic $C^0$-semigroup $ \{S_{\varepsilon}(t)\}_{t \geq 0} $, see e.g.\ \cite[Prop. 2.3]{denkschnaubelt} for the case $ k = 2$. Using also
standard parabolic theory, see e.g.\ \cite[Prop. 0.1]{lasieckacontrol} and \cite[Prop. 1.13]{lunardiinterpolation}, we obtain a first linear existence 
result with some extra regularity.
\begin{Lemma} \label{linear}
	Let $ r \in \N_0$, $ u_1 \in H^{r+1}(\R^n) $, and  $ g \in C^0([0, T],H^r(\R^n) )$. Then there exists a unique solution $ U $ of the linear equation
	\begin{align}\label{linear-equation}
	\partial_t U + \mathcal{A}U = \begin{pmatrix}
	0\\
	g
	\end{pmatrix}, \qquad U(0) = \begin{pmatrix}
	0\\
	u_1
	\end{pmatrix},
	\end{align}
	satisfying
	\begin{align*}
	U \in L^2(0,T; H^{r+4}\times H^{r+2}(\R^n) )\cap C^0(0, T; H^{r+3} \times H^{r+1}(\R^n))\cap H^1(0,T;H^{r+2}\times H^r(\R^n)).
	\end{align*}
\end{Lemma}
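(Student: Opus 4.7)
The plan is to read \eqref{linear-equation} as an inhomogeneous abstract Cauchy problem on the Hilbert space $X_r := H^{r+2}(\R^n) \times H^r(\R^n)$. On every scale $H^k(\R^n) \times H^{k-2}(\R^n)$ the operator $\mathcal{A}$ has the same formal expression and commutes with Bessel potentials, so invoking the case $k=2$ recalled from \cite{denkschnaubelt} I would argue that its realization on $X_r$ with domain $D(\mathcal{A}) = H^{r+4}(\R^n) \times H^{r+2}(\R^n)$ still generates an analytic $C^0$-semigroup. In this setup the inhomogeneity $F := (0, g)^{\top}$ lies in $C^0([0,T]; X_r) \hookrightarrow L^2(0,T; X_r)$, while the initial value $U_0 := (0, u_1)^{\top}$ belongs to the real interpolation space
$$(X_r, D(\mathcal{A}))_{1/2, 2} = H^{r+3}(\R^n) \times H^{r+1}(\R^n),$$
since $(H^s(\R^n), H^{s+2}(\R^n))_{1/2, 2} = H^{s+1}(\R^n)$ and $u_1 \in H^{r+1}(\R^n)$ by assumption.

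Next I would invoke the $L^2$-maximal regularity theorem for analytic semigroups on Hilbert spaces (de Simon's theorem), in the form cited as \cite[Prop.~0.1]{lasieckacontrol}. Because $F \in L^2(0,T; X_r)$ and the initial datum lies in the correct trace space, this should yield a unique strong solution
$$U \in L^2(0, T; D(\mathcal{A})) \cap H^1(0, T; X_r),$$
that is, $U \in L^2(0,T; H^{r+4} \times H^{r+2}) \cap H^1(0,T; H^{r+2} \times H^r)$. The continuous embedding of this maximal regularity space into $C^0([0,T]; (X_r, D(\mathcal{A}))_{1/2,2})$, provided by \cite[Prop.~1.13]{lunardiinterpolation}, would then promote the solution to $C^0([0,T]; H^{r+3} \times H^{r+1})$, with the initial value attained in this stronger sense. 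Uniqueness would follow at once, since the difference of two such solutions is a strong solution of the homogeneous Cauchy problem with zero data and must vanish by the semigroup identity.

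I do not anticipate a genuine analytic obstacle: the whole argument is bookkeeping built on two classical parabolic facts, namely analyticity of the semigroup generated by $-\mathcal{A}$ and Hilbert-space $L^2$-maximal regularity. The only point worth double-checking is the persistence of analyticity on every $H^{r+2} \times H^r$ scale, which is however automatic because $\mathcal{A}$ is a matrix of constant-coefficient Fourier multipliers, so conjugation with $(I-\Delta)^{r/2}$ componentwise reduces the general case to the base case $r=0$ already addressed in the literature.
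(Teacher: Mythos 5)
Your argument is correct and is essentially the paper's own route: the paper likewise deduces the lemma from analyticity of the semigroup generated by $-\mathcal{A}$ (citing \cite[Prop.~2.3]{denkschnaubelt} for $k=2$) together with Hilbert-space maximal $L^2$-regularity and the trace-space embedding, citing exactly \cite[Prop.~0.1]{lasieckacontrol} and \cite[Prop.~1.13]{lunardiinterpolation}. Your additional details (identification of $(X_r,\mathcal{D}(\mathcal{A}))_{1/2,2}=H^{r+3}\times H^{r+1}$ and the reduction to the base case by conjugation with Bessel potentials) are sound and merely flesh out what the paper leaves implicit.
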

We remark that the solution of \eqref{linear-equation} is given by
\begin{align}
U(t) = S_{\varepsilon }(t) \begin{pmatrix}  0\\ u_1\end{pmatrix} + \int_0^t S_{\varepsilon}(t-s) \begin{pmatrix}  0\\ g(s)\end{pmatrix}\,ds.
\end{align}
We quantify the above result by the following higher-order energy estimates.
\begin{Lemma} \label{linear-energy}	
Let $ r \in \N_0$, $g \in C^0([0, T],H^r(\R^n) )$, $u_1 \in H^{r+1}(\R^n) $, and $ u_0 : \R^n \to \R^L$ with $ \nabla u_0 \in H^{r+3}(\R^n)$. 
Then $v$ from Lemma \ref{linear} satisfies 
\begin{align}\label{linear-energy1}
&\norm{v_t(t)}{H^{r+1}}^2 +  \norm{ v(t)}{H^{r+3}}^2 + \varepsilon \int_{0}^T \norm{\nabla v_t(s)}{H^{r+1}}^2\,ds 
  + \varepsilon \int_{0}^T \norm{\nabla (v + u_0)(s)}{H^{r+3}}^2\,ds\\\nonumber
&\quad\leq C (1+ T)\left( \frac{1}{\varepsilon}\int_{0}^T \norm{g(s) + \Delta^2 u_0}{H^{r}}^2 \,ds + \norm{u_1}{H^{r+1}}^2 + \norm{  \nabla u_0}{H^{r+2}}^2\right)
\end{align}
 for $ 0 \leq t \leq  T$, and
\begin{align}\label{linear-energy2}
&\norm{\nabla^{r+1}v_t(t)}{L^2}^2 + \norm{ \nabla^{r+3} v(t)}{L^2}^2 
 + \varepsilon \int_{0}^T \norm{\nabla^{r+2} v_t(s)}{L^2}^2\,ds\\[5pt]\nonumber
&\quad\leq C  \Big( -\int_{0}^t \int_{\R^n} \nabla^{r} \left(g(s) + \Delta^2 u_0\right) \cdot \nabla^{r} \Delta v_t \,dx \,ds  
   + \norm{u_1}{H^{r+1}}^2 + \norm{  \nabla u_0}{H^{r+2}}^2\Big).
\end{align}
\end{Lemma}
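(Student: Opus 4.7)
I work with $u := v + u_0$, which solves
\[ \partial_t^2 u + \Delta^2 u - \varepsilon \Delta u_t = g + \Delta^2 u_0 =: \tilde f, \qquad u(0) = u_0, \quad u_t(0) = u_1, \]
so that $u_t = v_t$. Throughout I use the $\R^n$-identity $\norm{\Delta f}{L^2} = \norm{\nabla^2 f}{L^2}$ to interchange $\Delta$ and $\nabla^2$ freely.

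For \eqref{linear-energy2}, I apply $\nabla^r$ to the $u$-equation and test with $-\nabla^r \Delta u_t$; two integrations by parts produce
\[ \tfrac{1}{2} \tfrac{d}{dt}\bigl(\norm{\nabla^{r+1} u_t}{L^2}^2 + \norm{\nabla^{r+3} u}{L^2}^2\bigr) + \varepsilon \norm{\nabla^{r+2} u_t}{L^2}^2 = -\int_{\R^n} \nabla^r \tilde f \cdot \nabla^r \Delta u_t\,dx. \]
Integrating in $t$ with $u(0) = u_0$, $u_t(0) = u_1$, and applying $\norm{\nabla^{r+3} v}{L^2}^2 \leq 2\norm{\nabla^{r+3} u}{L^2}^2 + 2\norm{\nabla u_0}{H^{r+2}}^2$ yields \eqref{linear-energy2}.

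For \eqref{linear-energy1}, I combine two sweeps. The first applies $\nabla^j$ ($j = 0, \ldots, r+1$) and tests with $\nabla^j u_t$, producing the plate energy identity; for $j \geq 1$ I integrate by parts on the source and apply Young's inequality with weight $\varepsilon$ to replace the right-hand side by $\tfrac{C}{\varepsilon}\norm{\nabla^{j-1} \tilde f}{L^2}^2 + \tfrac{\varepsilon}{2}\norm{\nabla^{j+1} u_t}{L^2}^2$, absorbing the second summand into the damping; the case $j = 0$ closes via Cauchy--Schwarz and Gronwall (generating the $1+T$ factor). The output controls the $L^\infty_t(H^{r+1}\times H^{r+3})$-norm of $(v_t, v)$ and the $\varepsilon \int_0^T \norm{\nabla v_t}{H^{r+1}}^2$-term in \eqref{linear-energy1}. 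The second sweep instead tests the $\nabla^j$-differentiated equation with $-\Delta \nabla^j u$, yielding after integration by parts
\[ \tfrac{d}{dt}\Bigl[(\nabla^{j+1} u_t, \nabla^{j+1} u) + \tfrac{\varepsilon}{2}\norm{\nabla^{j+2} u}{L^2}^2\Bigr] + \norm{\nabla^{j+3} u}{L^2}^2 = \norm{\nabla^{j+1} u_t}{L^2}^2 - \int_{\R^n} \nabla^j \tilde f \cdot \Delta \nabla^j u\,dx. \]
Multiplying by $\varepsilon$, integrating in time, and controlling the right-hand side via the first-sweep output (plus the trivial bounds $\varepsilon \int_0^T \norm{\nabla^l u}{L^2}^2 \leq \varepsilon T \sup_t \norm{u}{H^{r+3}}^2$ for $l = 1, 2$) delivers the $\varepsilon \int_0^T \norm{\nabla(v + u_0)}{H^{r+3}}^2$-term; the $u_0$-contribution $\varepsilon T \norm{\nabla^{r+4} u_0}{L^2}^2$ is absorbed into $\tfrac{1}{\varepsilon}\int_0^T \norm{\tilde f}{H^r}^2\,ds$ using $\varepsilon \leq 1$.

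The main obstacle is the top-order index $j = r+1$ of the second sweep: the source naively requires $\norm{\tilde f}{H^{r+1}}$, which is unavailable, so I integrate by parts once more to transfer a derivative onto $u$ and then apply Young's inequality with the weight tuned so the resulting $\norm{\nabla^{r+4} u}{L^2}^2$ absorbs back into the left-hand side.
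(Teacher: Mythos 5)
Your strategy is essentially the paper's: estimate \eqref{linear-energy2} is obtained exactly as there (apply $\nabla^r$ to the equation for $u=v+u_0$ and test with $-\nabla^r\Delta u_t$), and for \eqref{linear-energy1} the paper likewise runs an energy sweep with the multipliers $-\nabla^l\Delta u_t$ ($l\le r$, Young's inequality with weight $\varepsilon$, absorption into the damping), a second sweep with $\varepsilon\nabla^l\Delta^2 u$ for the term $\varepsilon\int_0^T\norm{\nabla(v+u_0)}{H^{r+3}}^2\,ds$, and a test with $u_t$ for the lowest orders. Your variant (testing with $\nabla^j u_t$ and with $-\varepsilon\nabla^j\Delta u$ for $j\le r+1$, shifting one derivative off the source at top order and absorbing $\varepsilon\norm{\nabla^{r+4}u}{L^2}^2$ into the left-hand side) is the same argument with the multipliers shifted by one Laplacian, and your treatment of the $j=r+1$ source term is a correct substitute for the paper's restriction to $l\le r$ with the higher-order multiplier.

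Three points, however, need repair as written. (i) Neither sweep yields $\norm{v(t)}{L^2}$ or $\norm{\nabla v(t)}{L^2}$, which are part of $\norm{v(t)}{H^{r+3}}$ in \eqref{linear-energy1}; the paper obtains them from $v(0)=0$ via $\norm{v}{L^{\infty}_tL^2}\le T\norm{v_t}{L^{\infty}_tL^2}$, and this (together with a bilinear absorption of the source at order zero) is the actual origin of the factor $1+T$ — a plain Gronwall closure at $j=0$ produces an exponential factor in $T$ rather than $1+T$. (ii) Your auxiliary bound $\varepsilon\int_0^T\norm{\nabla^l u}{L^2}^2\,ds\le\varepsilon T\sup_t\norm{u}{H^{r+3}}^2$ is not meaningful, since $u=v+u_0$ need not belong to $L^2(\R^n)$; it has to be phrased through $\norm{\nabla v}{H^{r+2}}+\norm{\nabla u_0}{H^{r+2}}$, where the low-order piece of $\nabla v$ again requires the bound from (i). (iii) At $j=r+1$ the obstruction is not only the source: after multiplying by $\varepsilon$ and integrating in time, the boundary cross term $\varepsilon\int_{\R^n}\nabla^{r+2}u_t(t)\cdot\nabla^{r+2}u(t)\,dx$ appears, and $\sup_t\norm{\nabla^{r+2}u_t(t)}{L^2}$ is \emph{not} controlled by your first sweep (only its $\varepsilon$-weighted time integral is); one further integration by parts in $x$, rewriting it as $-\varepsilon\int_{\R^n}\nabla^{r+1}u_t(t)\cdot\nabla^{r+3}u(t)\,dx$, makes it controllable by the available sup bounds. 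With these routine insertions your argument closes and coincides in substance with the paper's proof.
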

\begin{proof}
Writing $U=(v,v_t)$ in  Lemma \ref{linear},  the function $ u = v + u_0 $ fulfills
\begin{align}\label{linear-energy-equation}
\partial_t^2 u + \Delta^2 u - \varepsilon \Delta \partial_t u = g + \Delta^2u_0 
\end{align}
in $ L^2(0, T;H^r(\R^n) )$. We first differentiate \eqref{linear-energy-equation} of order $ \nabla^{l}$ with $ l \in \{ 0, \dots, r \} $.
Testing with $ - \nabla^l \Delta u_t \in L^2_{t,x}$ and integrating by parts in $x$, we derive
\begin{align}\label{above}
\f{d}{dt}\norm{ \nabla^{l+1} u_t(t)}{L^2}^2& + \f{d}{dt}\norm{ \nabla^{l+3} u(t)}{L^2}^2 + \varepsilon \norm{\nabla^{l+2} u_t(t)}{L^2}^2\\ \nonumber
& \leq  \frac{C}{\varepsilon} \norm{\nabla^l (g + \Delta^2 u_0)}{L^2}^2 +  \f{\varepsilon}{2}\norm{\nabla^{l+2}u_t(t)}{L^2}^2,
\end{align}
which makes sense for a.e.\ $t$. (Here and below we use the duality $(H^1, H^{-1}) $ in intermediate steps.) 
We then absorb the last term by the left-hand side and integrate the inequality in $t$.

To control the second summand with $\varepsilon$ in \eqref{linear-energy1}, we test the differentiated version of \eqref{linear-energy-equation} 
 by $ \varepsilon \nabla^{l} \Delta^2 u$. Here we proceed similarly as before, where we integrate the term
$$ \varepsilon \int_0^T \int_{\R^n} \nabla^l\partial_t^2u \cdot \nabla^l\Delta^2 u \,dx\,ds$$
by parts in $t$ and $x$ before aborbing it.

It remains to estimate the $L^2$-norm of $v_t(t)$ and the $H^2$-norm of $v(t)$. These inequalities
follow by testing the equation with $u_t$ and using the fact that
\[ \norm{u-u_0}{L^{\infty}_tL^2} \leq T \norm{u_t}{L^{\infty}_tL^2}.\qedhere\]
\end{proof}

\begin{proof}[Proof of Lemma \ref{Lemma-existence}]
We aim at constructing a solution $ U \in C^0([0,T], H^k \times H^{k-2})$, but due to  $ \Delta^2 u_0 \in H^{k-4} $ we have 
$f_{\varepsilon}(U) \in  C^0([0,T], H^{k-4}) $, which is insufficient for an application of Lemmas \ref{linear} and \ref{linear-energy}
 in a fixed point argument for $v$.
 
We thus approximate $u_0$ by $u_0^{\delta} \in C^{\infty}(\R^n , \R^L) $ for $ \delta > 0 $ such that $\supp(\nabla u_0^{\delta})\subset \R^n $ is compact with 
\begin{align}
u_0^{\delta} \to u_0 \text{ \ a.e. \ \  and \ \ } \nabla u_0^{\delta} \to \nabla u_0~~\text{in}~ H^{k-1}(\R^n)~~\text{as}~~\delta \to 0^+.
\end{align}
Defining $f_{\varepsilon, \delta }$ as above with $ u_0^{\delta}$ instead of $u_0$, we obtain
$ f_{\varepsilon, \delta }(U) \in C^0([0, T], H^{k-3}(\R^n) )$.    For the data
$ (u_0^{\delta},u_1) $ we now prove the existence of a fixed point for the operator $ v \mapsto \mathcal{S}(v)$ defined through
\begin{align}
\begin{pmatrix}
\mathcal{S}(v)\\
\partial_t \mathcal{S} (v)
\end{pmatrix} = S_{\varepsilon} (t) \begin{pmatrix}
0 \\u_1
\end{pmatrix}  + \int_0^tS_{\varepsilon}(t-s)\begin{pmatrix}
0\\
f_{\varepsilon, \delta}(v)
\end{pmatrix} \,ds,
\end{align}
which acts on the space
\begin{align*}
\mathcal{B}_R(T) : = \big \{ & v \in C^0([0,T], H^k) \cap C^1([0,T], H^{k-2})~|~ v(0) = 0 ,~ v_t(0) = u_1,\\
& \norm{v}{\mathcal{B}} := \norm{v_t}{L^{\infty}H^{k-2}} + \norm{v}{L^{\infty}L^2 } + \norm{\nabla (v + u_0^{\delta})}{L^{\infty}H^{k-1}} \leq R \big \},
\end{align*}
for parameters $ R > 0$ and  $T \in (0, 1)$ fixed below and the metric given by
\begin{align*}
\norm{v_1 - v_2}{\mathcal{B}(T)} = \norm{v_1 - v_2}{L^{\infty} H^k} + \norm{\partial_t v_1 - \partial_t v_2}{L^{\infty} H^{k-2}},\quad  v_1, v_2 \in \mathcal{B}_R(T).
\end{align*}
Let $ \varepsilon\in (0,1)$ be fixed. We will show that the map
	$$ \mathcal{S}: \mathcal{B}_R(T) \to \mathcal{B}_R(T)$$
	is strictly contractive with respect to $ \norm{\cdot}{\mathcal{B}(T)}$ if we choose $R = R_{\delta}$ and $T = T_{\delta} $ with
\begin{align}\label{def:Tdelta}
	R^k_{\delta} &= 3 (\norm{\nabla u_0^{\delta}}{H^{k-1}} + \norm{u_1}{H^{k-2}} )^k =: 3 R_{0, \delta}^k, \notag\\
	T_{\delta} &= \f{1}{2} \min \left\{ \left(\f{\sqrt[k]{3}-1}{\sqrt[k]{3}}\right)^2 \f{\varepsilon}{\hat{C}^2(1 + 3 R_{0, \delta}^k)^2}, 
	   \f{\varepsilon }{\hat{C}^2 ( 1 + 6 R_{0, \delta}^k)^2} \right \}
\end{align}	   
for a constant $\hat{C}$ depending only on $N$, $n$, and $k$.
To show  this statement, we have to prove  the estimates
\begin{align}\label{fixed-point-estimate1}
	\norm{\mathcal{S}(v)}{\mathcal{B}} &\leq \f{\hat{C}}{\varepsilon^{\f{1}{2}}} T^{\f{1}{2}} ( 1 + \norm{v}{\mathcal{B}}^k)\norm{v}{\mathcal{B}} 
	+ \norm{\nabla u_0^{\delta}}{H^{k-1}} + \norm{u_1}{H^{k-2}},\\ \label{fixed-point-estimate2}
	\norm{\mathcal{S}(v) - \mathcal{S}(\tilde{v}) }{\mathcal{B}(T)} &\leq \f{\hat{C}}{\varepsilon^{\f{1}{2}}} T^{\f{1}{2}} ( 1 + \norm{v}{\mathcal{B}}^k 
	+ \norm{\tilde{v}}{\mathcal{B}}^k) \norm{v - \tilde{v}}{\mathcal{B}(T)}
\end{align}
 for $ v,\tilde{v} \in \mathcal{B}_R(T)$. To employ the inequality \eqref{linear-energy1} for $ r = k-3 $, we need to bound the norms
$$ \norm{\mathcal{N}_{\varepsilon} ( v(t) + u_0^{\delta})}{H^{k-3}}^2~~\text{and}~~ 
    \norm{\mathcal{N}_{\varepsilon} (v(t) + u_0^{\delta}) - \mathcal{N}_{\varepsilon} (\tilde{v}(t) + u_0^{\delta}) }{H^{k-3}}^2$$
    by $C( 1 + \norm{v}{\mathcal{B}}^k)\norm{v}{\mathcal{B}} $ and $C  ( 1 + \norm{v}{\mathcal{B}}^k 
	+ \norm{\tilde{v}}{\mathcal{B}}^k) \norm{v - \tilde{v}}{\mathcal{B}(T)}$, respectively.
This is done by means of Lemma \ref{diff-Lemma} and Corollary \ref{uniqueness-corollary2} combined with a careful application of  the Moser estimate in
Lemma \ref{Moser-estimate}.  We give the relevant  details below in Section \ref{uniform-energy} in the proof of the a priori estimate and 
in Section \ref{uni} for the uniqueness since these parts require more thought.
In this way  we obtain in the fixed point $  v^{\delta} = \mathcal{S}(v^{\delta})$ satisyfying
\begin{align}\label{delta-uniform-bound}
\norm{v^{\delta}_t}{L^{\infty}H^{k-2}}^2+ \norm{ v^{\delta}}{L^{\infty}H^k}^2  + \varepsilon\int_0^{T_{\delta}} \norm{v^{\delta}_t(s)}{H^{k-1}}^2 \,ds 
   + \varepsilon\int_0^{T_{\delta}} \norm{\nabla(v^{\delta} + u_0^{\delta})}{H^{k}}^2 \,ds \lesssim R^2_{\delta}.
\end{align}
In particular, $ v^{\delta} \in L^2(0,T_{\delta}; H^{k+1}) \cap H^1(0,T_{\delta}; H^{k-1} )$. 

We next define $ R_0,$ $R$ and $\tilde{T} > 0 $ in the same way as  $ R_{0, \delta},$ $R_{\delta}$ and $T_{\delta} $ using $ u_0$ instead of $ u_0^\delta$ 
and the $R_0$ instead of $ R_0^\delta$. Thus,
$$ R_{0, \delta} \to R_0, \quad R_{\delta} \to R, \quad T_{\delta} \to \tilde{T} \quad \text{as}~ \delta \to 0^+.$$
For sufficiently small  $ \delta > 0 $ we have $  T_{\delta} > \f{1}{2}\tilde{T} = : T $ and $ | R_{0, \delta} - R_0| \leq R_0 $.
Hence  $ v^{\delta} : \R^n \times [0,T] \to \R^L $ is well defined and $ \norm{v^{\delta}}{\mathcal{B}(T)} \leq C R $ for a constant $C > 0 $.
Observe that for sufficiently small $ \delta, \delta' > 0 $, the differences $ v^{\delta} - v^{\delta'}$
and $\partial_t v^{\delta}- \partial_t v^{\delta'} $ solve \eqref{linear-equation} with the nonlinearity 
$$\mathcal{N}_{\varepsilon}(v^{\delta} + u_0^{\delta}) - \mathcal{N}_{\varepsilon}(v^{\delta'} + u_0^{\delta'}) + \Delta^2(u_0^{\delta} - u_0^{\delta'}) \in C^0([0,T],H^{k-3}).$$
Similar to the proof of the Lipschitz estimate \eqref{fixed-point-estimate2}, Lemma \ref{linear-energy} then yields the bound
\begin{align*}
\norm{v^{\delta} - v^{\delta'}}{\mathcal{B}(T)}^2 +&  \varepsilon \int_0^{T_{\delta}} \norm{v^{\delta}_t(s) - v^{\delta'}_t(s) }{H^{k-1}}^2 ds 
     + \varepsilon \int_0^{T_{\delta}} \norm{\nabla(v^{\delta} -v^{\delta'})   + \nabla (u_0^{\delta} - u_0^{\delta'})}{H^{k}}^2 ds\\ \nonumber
&\leq C \f{T}{\varepsilon}(1 + R^{2k})\norm{v^{\delta} - v^{\delta'}}{\mathcal{B}(T)}^2  + \tilde{C}_{\varepsilon, R}\norm{\nabla u_0^{\delta}-\nabla u_0^{\delta'}}{H^{k-1}}^2.
\end{align*}
Hence, if $T=T(\varepsilon)$ is sufficiently small, as $\delta\to0$ the functions $v^{\delta}$ tend to a function
$$ v \in C^0([0,T], H^k)\cap C^1([0,T], H^{k-2}) \cap H^1(0,T; H^{k-1})$$ 
with $ \nabla (v + u_0) \in L^2(0,T; H^k)$, where the limits exist in these spaces. In particular, $ (v, v_t)$ is a solution of \eqref{first-order-system} and 
$ u = v + u_0 $ solves \eqref{Reg-expansion}. Moreover, by  \eqref{linear-energy1} the function $ u^{\delta} = v^{\delta} + u_0^{\delta}$ satisfies
inequality \eqref{Lemma-existence-estimate}, and therefore this estimate also holds for $ u $ since $  u^{\delta}_t \to  u_t$ strongly in $C^0([0,T], H^{k-2})$ and 
$ \mathcal{N}_{\varepsilon}( u^{\delta} ) \to \mathcal{N}_{\varepsilon}(u) $ strongly in $ L^2(0,T; H^{k-2})$ because of 
Corollary \ref{uniqueness-corollary2} and Lemma \ref{Moser-estimate}.

For the uniqueness of $v $, we note that, for  a second solution $ \tilde{v}$, the functions $ w = v - \tilde{v}$ and $w_t = v_t - \tilde{v}_t$ 
solve \eqref{linear-equation} with the nonlinearity $ \mathcal{N}_{\varepsilon }(v + u_0) - \mathcal{N}_{\varepsilon}(\tilde{v} +u_0) \in C^0([0,T], H^{k-3})$. 
 Lemma \ref{linear-energy} then yields  the estimate 
\begin{align}
 \norm{v - \tilde{v}}{\mathcal{B}(T)}^2 \leq C \f{T}{\varepsilon} ( 1 + R^{2k}) \norm{v - \tilde{v}}{\mathcal{B}(T)}^2. 
\end{align}
(Note that $ u_0 $ from the Lemma is different, namely $u_0 = 0$.)
Hence, if $T $ is sufficiently small, we obtain $ v = \tilde{v}$ and thus $ u = v + u_0 $ is unique.
\end{proof}
We next show that the above solution  actually takes values in the target manifold.
\begin{Prop}\label{Corollary-existence}
Let $ \varepsilon \in (0,1)$ and take $ u_0, u_1 : \R^n \to \R^L$ with $u_0(x) \in N$ and $u_1(x) \in T_{u_0(x)}N$ for a.e. $x \in \R^n $ satisfying
$$ (\nabla u_0, u_1) \in H^{k-1}(\R^n) \times H^{k-2}(\R^n) $$ 
for some $ k \in \N$ with $k > \lfloor \f{n}{2} \rfloor + 2 $. Then there exists a maximal existence time $T_{\varepsilon,m}\in(0,\infty]$ and a unique 
solution $u\in  \R^n \times [0,T_{\varepsilon,m}) \to N$ of \eqref{Reg-Cauchy-problem} with $u(0)=u_0$, $\partial_t u(0)=u_1$, 
\[ u-u_0\in C^0([0,T_{\varepsilon,m}), H^k) \cap C^1([0,T_{\varepsilon,m}), H^{k-2})\cap  H^1_{loc}([0,T_{\varepsilon,m}), H^{k-1}(\R^n))\]
and  $\nabla u \in L^2_{loc}([0,T_{\varepsilon,m}), H^{k}(\R^n))$ which satisfies \eqref{Lemma-existence-estimate} for $t\in [0,T_{\varepsilon,m})$.
\end{Prop}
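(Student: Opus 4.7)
The existence and uniqueness of a solution to \eqref{Reg-expansion} on a short interval $[0,T_\varepsilon]$ together with the claimed regularity follow directly from Lemma \ref{Lemma-existence} applied to the data $(u_0,u_1)$. The genuine content of the proposition is to upgrade this to a solution of the \emph{constrained} equation \eqref{Reg-Cauchy-problem}, i.e.\ to verify $u(\cdot,t)\in N$. Once this is established on the short interval, a standard reiteration argument at $(u(T),\partial_t u(T))$ yields the maximal existence time $T_{\varepsilon,m}\in(0,\infty]$.

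To see that $u$ stays on $N$, I would consider the deviation $\eta := u - \pi\circ u$. Since $k>\lfloor n/2\rfloor + 2$, Sobolev embedding gives $u\in C^0([0,T_\varepsilon]\times\R^n)$, and combined with $u(\cdot,0)=u_0\in N$, the compactness of $N$, and $\|u(\cdot,t)-u_0\|_{H^k}\to 0$ as $t\to 0^+$, this ensures that $u(x,t)$ stays in the $\delta_0$-tubular neighborhood of $N$ on a possibly smaller interval, so $\eta$ is well defined and $\pi(u)\in N$. The initial data satisfy $\eta(\cdot,0)=u_0-\pi(u_0)=0$ and, using $d\pi_{|u_0}=P_{u_0}$, $\partial_t\eta(\cdot,0)=(I-P_{u_0})u_1=0$, thanks to the hypotheses $u_0\in N$ and $u_1\in T_{u_0}N$.

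The decisive step is then to derive a fourth-order equation for $\eta$ of the form
\[
\partial_t^2\eta + \Delta^2\eta - \varepsilon\Delta\partial_t\eta
   = F\bigl(u,\partial_t u,\nabla u,\ldots;\,\eta,\partial_t\eta,\nabla\eta,\nabla^2\eta,\nabla^3\eta\bigr),
\]
in which $F$ vanishes whenever $\eta$ and its $(x,t)$-derivatives vanish. Structurally this is forced by the construction of $\mathcal{N}_\varepsilon$: from the derivation of \eqref{expansion} and \eqref{eq:Nep} the polynomial expressions entering $\mathcal{N}_\varepsilon(u)$ coincide with $(I-P_u)$ applied to $\partial_t^2u+\Delta^2u-\varepsilon\Delta\partial_tu$ whenever $u\in N$ and $\partial_tu\in T_uN$. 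Writing $u=\pi(u)+\eta$, applying the plate operator to this decomposition, and expanding the resulting projection and derivative terms via Lemma \ref{Lemma-Leibniz} then produces the desired $F$, with coefficients depending smoothly on the derivatives of $u$ up to order three.

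The conclusion follows by applying the energy estimate \eqref{linear-energy2} of Lemma \ref{linear-energy} to $\eta$: the vanishing initial data for $\eta$, the vanishing of $F$ at $\eta=0$, and a Lipschitz bound on $F$ in $(\eta,\partial_t\eta,\ldots)$ with Lipschitz constant controlled by the Sobolev norms of $u$ (bounded through \eqref{Lemma-existence-estimate} combined with the Moser estimate of Lemma \ref{Moser-estimate}) reduce the matter to a Gr\"onwall argument that forces $\eta\equiv 0$. Hence $u=\pi(u)\in N$, so $u$ in fact solves \eqref{Reg-Cauchy-problem}, and the maximal existence time $T_{\varepsilon,m}$ is then produced by the standard continuation argument. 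The main obstacle is the bookkeeping required to derive the equation for $\eta$ with a remainder that truly vanishes to first order in $\eta$: the nonlinear dependence of $\pi$ on $u$ and the appearance of $\nabla^3u$ in $\mathcal{N}$ make this the most delicate point, and it depends crucially on the Leibniz formula of Lemma \ref{Lemma-Leibniz}.
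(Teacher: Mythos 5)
Your overall strategy coincides with the paper's: take the solution of the unconstrained problem \eqref{Reg-expansion} from Lemma \ref{Lemma-existence}, show that the deviation from $\pi(u)$ vanishes on a short interval using its zero initial data, and then continue to a maximal time. The difference lies in how the vanishing is concluded, and here the paper's mechanism is both simpler and stronger than what you sketch. Setting $w=\pi(u)-u$, the chain-rule expansions of $\partial_t^2\pi(u)$, $\Delta\partial_t\pi(u)$ and $\Delta^2\pi(u)$ reproduce \emph{exactly} the defining polynomial of $\mathcal{N}_\eps(u)$ (recall $P_p=d\pi_p$, so $dP=d^2\pi$ etc.), whence $(\partial_t^2+\Delta^2-\eps\Delta\partial_t)w=d\pi_u(\mathcal{N}_\eps(u))\in T_{\pi(u)}N$, while $w_t=(d\pi_u-I)u_t\perp T_{\pi(u)}N$. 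Testing with $w_t$ therefore kills the right-hand side identically and gives the exact identity $\partial_t\tfrac12\int(|w_t|^2+|\Delta w|^2)\,dx+\eps\int|\nabla w_t|^2\,dx=0$, so $w\equiv0$ with no Lipschitz estimate, no Gr\"onwall argument, and no confrontation with the third-order derivatives in $\mathcal{N}$.

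In your version the decisive step is precisely the one you defer as ``bookkeeping'', and as stated there is a genuine gap. First, the forcing in the $\eta$-equation is $F=-d\pi_u(\mathcal{N}_\eps(u))$, an expression in $u$ alone; to obtain the claimed Lipschitz structure in $(\eta,\partial_t\eta,\dots,\nabla^3\eta)$ one needs the additional observation that $P_{\pi(u)}\mathcal{N}_\eps(\pi(u))=0$ identically (because $\pi(u)$ maps into $N$, so $\mathcal{N}_\eps(\pi(u))$ is normal there), which lets one write $F=-\bigl(P_u\mathcal{N}_\eps(u)-P_{\pi(u)}\mathcal{N}_\eps(\pi(u))\bigr)$ and estimate it as a difference via Corollary \ref{uniqueness-corollary2}, Lemma \ref{remark-parabolic-nonlin} and Lemma \ref{Moser-estimate}; nothing in your sketch produces this cancellation. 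Second, even granting it, $F$ contains $\nabla^3\eta$ and $\nabla\eta_t$, so the Gr\"onwall inequality does not close with the plate energy by itself: in \eqref{linear-energy2} at level $r$ the test function $\nabla^r\Delta\eta_t$ must be absorbed by the damping term $\eps\int\|\nabla^{r+2}\eta_t\|_{L^2}^2$ at the cost of constants of order $\eps^{-1}$ (admissible here since $\eps$ is fixed, but it must be said, and it shows this route, unlike the paper's orthogonality argument, cannot survive the later limit $\eps\to0$). Moreover, for the admissible range $k>\lfloor n/2\rfloor+2$ the coefficients $\nabla^3u$, $u_t$, $\nabla u_t$ need not be in $L^\infty$, so the ``Lipschitz constant controlled by Sobolev norms of $u$'' has to be implemented by Moser-type splittings at a sufficiently high level (e.g. $r=k-4$) rather than pointwise. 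With these ingredients supplied your route can be completed for fixed $\eps$, but as written the central cancellation and the closing of the Gr\"onwall argument are missing; the paper's tangency/orthogonality argument is the short way around both obstacles.
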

\begin{proof}
Fix $\varepsilon\in(0,1)$.
Let $ u: \R^n \times [0,T] \to \R^L $ be the solution of \eqref{Reg-expansion} constructed in Lemma \ref{Lemma-existence}.
We first show that $ u(x,t) \in N $ for  $ x \in \R^n $ and $ t > 0 $ small enough. Since
$$ C^0([0,T], H^k) \hookrightarrow C^0( \R^n \times [0,T])$$
and $ u_0 \in N $ a.e. on $ \R^n$, there exists a time $ \tilde{T}\in (0, T]$ such that for $t \in [0, \tilde{T}]$ the distance
\begin{align*}
\| \dist(u(t), N)\|_{L^\infty} \leq \sup_{x \in \R^n} | u(x,t) - u_0(x)|\lesssim \norm{ u(t) -  u_0}{H^k}
\end{align*}
is so small that $ \bar{u} = \pi (u) $ is well-defined. We then let $w=\bar u-u$ and we note that $w(0)=\partial_t w(0)=0$. Calculating
\begin{align*}
\partial_t^2 \bar u&= d\pi_u \partial_t^2 u+d^2\pi_u (u_t,u_t),\\
\Delta \bar u_t &= d\pi_u \Delta u_t+d^2\pi_u(\Delta u,u_t)+2d^2\pi_u (\nabla u_t,\nabla u)+d^3\pi_u(\nabla u, \nabla u,u_t),\\
\Delta^2 \bar u&= d\pi_u \Delta^2 u+d^2\pi_u (\Delta u,\Delta u)+4d^2\pi_u(\nabla u,\nabla \Delta u)+2d^2\pi_u (\nabla^2 u, \nabla^2 u)\\
&\quad +2d^3\pi_u (\nabla u,\nabla u,\Delta u)+4d^3\pi_u (\nabla u,\nabla u,\nabla^2 u)\\
&\quad +d^4\pi_u (\nabla u,\nabla u,\nabla u,\nabla u),
\end{align*}
 we conclude that
\begin{align*}
(\partial_t^2+\Delta^2-\eps \Delta \partial_t)w&= d\pi_u \Big((\partial_t^2+\Delta^2-\eps \Delta \partial_t)u\Big)+\mathcal{N}_\eps(u)-\mathcal{N}_\eps(u)\\
  &= d\pi_u (\mathcal{N}_\eps(u))  \in  T_{\bar u}N.
\end{align*}
Next, we note that
\[
w_t=\Big((\pi-I)(u)\Big)_t=(d\pi_{\bar u}-I)u_t \perp T_{\bar u} N.
\]
By testing the above equation for $w$ by $w_t$, it follows
\[
\partial_t \frac12 \int_{\R^n} |w_t|^2\,dx +\partial_t \frac12 \int_{\R^n} |\Delta w|^2\,dx +\eps \int_{\R^n} |\nabla w_t|^2\,dx =0.
\] 
This fact implies that $w_t=0$ and hence $w=0$, which means that $u\in N$.

The claimed uniqueness follows similarly to the end of the proof of Lemma \ref{Lemma-existence}. Finally, we let $T_{\varepsilon,m}\ge \tilde{T}$ be the supremum of times $T'>0$ such that we have 
a solution $u:[0,T']\times \mathbb{R}^n\to N$ of  \eqref{Reg-Cauchy-problem} with $u(0)=u_0$, $\partial_t u(0)=u_1$,  
\[u-u_0\in C^0([0,T'], H^k) \cap C^1([0,T'], H^{k-2})\cap  H^1(0,T'; H^{k-1}(\R^n))\]
and  $\nabla u \in L^2(0,T'; H^{k}(\R^n))$ which satisfies \eqref{Lemma-existence-estimate} on $[0,T']$.
\end{proof}
\begin{Rem}\label{rem:ep-sol}
We remark that up to now we fixed $ \varepsilon \in (0,1)$. Since the constants in the upper bound in estimates such as \eqref{delta-uniform-bound} 
are of order $ O\left(\varepsilon^{-1} \right)$, we have to prove $\varepsilon$ independent estimates in the next section. 
\end{Rem}

\section{The a priori estimate}\label{uniform-energy}
We now prove an a priori estimate for the solution $ u^\varepsilon: \R^n \times [0, T_{\varepsilon,m} ) \to N $ of the equation
\begin{equation}\label{regularization}
\partial_t^2u^\varepsilon + \Delta^2u^\varepsilon - \varepsilon \Delta \partial_t u^\varepsilon \perp T_{u^\varepsilon} N \quad \text{on}~ \R^n\times [0, T_{\varepsilon,m})
\end{equation}
given by Proposition \ref{Corollary-existence} with $ \varepsilon \in (0,1)$ and initial data $ u_0, u_1 : \R^n \to \R^L$ such that $u_0(x) \in N$ and
$u_1(x) \in T_{u_0(x)}N$ for a.e. $x \in \R^n $ as well as 
$$ (\nabla u_0, u_1) \in H^{k-1}(\R^n) \times H^{k-2}(\R^n) $$ 
for some $ k \in \N$ with $k > \lfloor \f{n}{2} \rfloor + 2 $.
As before we write  $u$ instead of  $u^{\varepsilon}$, and we fix a number $T<T_{\varepsilon,m}$.
Moreover, \eqref{Lemma-existence-estimate} says that 
\begin{align}\label{energy-estimate}\nonumber
&\norm{\nabla^{k-2}u_t(t)}{L^2}^2 + \norm{\nabla^k u(t)}{L^2}^2 + \varepsilon \int_0^t \norm{\nabla^{k-1}u_t(s)}{L^2}^2 ds\\ 
 &\qquad \lesssim\int_0^t \int_{\R^n} \nabla^{k-2}\left[\mathcal{N}(u) - \varepsilon (I-P_u)(\Delta u_t) \right]\cdot \nabla^{k-2}u_t \,dx \,ds
 + \norm{\nabla^{k-2}u_1}{L^2}^2 + \norm{\nabla^k u_0}{L^2}^2
\end{align}
for $ t \in [0, T]$. We recall that the summand with $\varepsilon$ on the right-hand side is well defined because of \eqref{eq:Nep}.

In the following, we often make use of the relations $ \mathcal{N}(u) \perp T_uN $ and $ u_t \in T_uN $ which hold since $ u(x,t) \in N $ for a.e. 
$(x,t) \in \R^n \times [0, T]$. In particular,   $ \mathcal{N}(u) = (I-P_u)\mathcal{N}(u)$. Using this fact, we first write
\begin{align}\label{eq:cancel}
\nabla^{k-2}( \mathcal{N}(u) ) \nabla^{k-2}u_t &= \sum_{\substack{ m_1 + m_2 = k-2\\ m_1 > 0 }} \nabla^{m_1}( I- P_u) \star \nabla^{m_2}( \mathcal{N}(u)) \nabla^{k-2} u_t \\
&\quad+ \nabla^{k-2}(\mathcal{N}(u)) (I- P_u) \nabla^{k-2}u_t\notag\\
&= \sum_{\substack{ m_1 + m_2 = k-2\\ m_1 > 0 }} \nabla^{m_1}( I- P_u) \star \nabla^{m_2}( \mathcal{N}(u)) \nabla^{k-2} u_t \notag\\
&\quad - \sum_{\substack{ l_1 + l_2 = k-2\\ l_1 > 0 }}\nabla^{k-2}(\mathcal{N}(u))\star \nabla^{l_1}[(I- P_u)] \nabla^{l_2}u_t\notag\\
& =: I_1 + I_2,\notag
\end{align}
where the second equality follows from the Leibniz formula
\begin{align}\label{Leibniz}
0 = \nabla^{k-2} \left[(I-P_u)u_t \right] = \sum_{\substack{ l_1 + l_2 = k-2\\ l_1 > 0 }} \nabla^{l_1}[ (I-P_u)] \star\nabla^{l_2}u_t + (I-P_u) \nabla^{k-2} u_t.
\end{align}
In \eqref{energy-estimate} we thus split 
\begin{align}
\int_{\R^n}\nabla^{k-2}&(\mathcal{N}(u) - \varepsilon (I-P_u)(\Delta u_t)) \cdot \nabla^{k-2}u_t\,dx \notag\\ 
&= \int_{\R^n}\nabla^{k-2}(\mathcal{N}(u))\cdot \nabla^{k-2}u_t\,dx -   \varepsilon\int_{\R^n}\nabla^{k-2}( (I-P_u)(\Delta u_t))\cdot \nabla^{k-2}u_t\,dx\notag \\ 
&= \int_{\R^n} I_1 \,dx + \int_{\R^n} I_2 \,dx - \varepsilon\int_{\R^n}\nabla^{k-2}( (I-P_u)(\Delta u_t))\cdot \nabla^{k-2}u_t\,dx, \label{eq:energy-ep} 
\end{align}

We start by estimating 
$$ \int_{\R^n}I_1 \,dx \leq \sum_{\substack{m_1 + m_2 = k - 2\\ m_1 > 0 }}\norm{\nabla^{m_1}(I-P_u)\star\nabla^{m_2}(\mathcal{N}(u))}{L^2}\norm{\nabla^{k-2}u_t}{L^2}.$$
Lemma \ref{Lemma-Leibniz} yields the identity
\begin{align}\label{eq:nabla-I-Pu}
\nabla^{m_1} (I-P_u) = - \sum_{j = 1}^{m_1} \sum_{\sum_{i=1}^j \tilde{k}_i=m_1-j} d^jP_u( \nabla^{\tilde{k}_1 + 1}u\star\dots\star\nabla^{\tilde{k}_j+1}u),
\end{align}
which implies the pointwise inequality
\begin{align}
|\nabla^{m_1} (I-P_u)| \lesssim \sum_{j = 1}^{m_1} \sum_{\sum_{i=1}^j \tilde{k}_i = m_1-j}|\nabla^{\tilde{k}_1+1}u|\cdots |\nabla^{\tilde{k}_j+1}u|.
\end{align}
On the other hand, Lemma \ref{diff-Lemma} allows us to bound  $ | \nabla^{m_2}( \mathcal{N}(u))| $ pointwise (up to a constant) by terms of the form
\begin{align}\label{eins}
&|\nabla^{\tilde{m}_1 + 1}u | \cdots |\nabla^{\tilde{m}_i +1}u| \, \big[| \nabla^{k_1 }u_t| | \nabla^{k_2}u_t| + | \nabla^{k_1 +2 }u| | \nabla^{k_2+2}u| 
   +| \nabla^{k_1 + 3 }u| | \nabla^{k_2+1}u|  \big],\\ \label{zwei}
& |\nabla^{\tilde{m}_1 + 1}u | \cdots |\nabla^{\tilde{m}_i +1}u \, \big[| \nabla^{k_1 +1 }u| | \nabla^{k_2 + 1}u|| \nabla^{k_3 +2 }u| \big], \\ \label{drei}
& |\nabla^{\tilde{m}_1 + 1}u | \cdots |\nabla^{\tilde{m}_i +1}u|\, \big[| \nabla^{k_1 +1 }u| | \nabla^{k_2 + 1}u|| \nabla^{k_3 +1 }u|| \nabla^{k_4 +1 }u|  \big],
\end{align}
where $  i = 1, \dots, m_2 $ and $ \tilde{m}_1 + \dots +  \tilde{m}_i +  k_1 +  \dots  = m_2 - i$ are as in Lemma \ref{diff-Lemma}. Moreover, in the case $i=0$
(where no derivatives fall on the coefficients) the terms are of the form
\begin{align*}
&| \nabla^{k_1 }u_t| | \nabla^{k_2}u_t| + | \nabla^{k_1 +2 }u| | \nabla^{k_2+2}u| +| \nabla^{k_1 + 3 }u| | \nabla^{k_2+1}u|, \\
&| \nabla^{k_1 +1 }u| | \nabla^{k_2 + 1}u|| \nabla^{k_3 +2 }u|,\\
& | \nabla^{k_1 +1 }u| | \nabla^{k_2 + 1}u|| \nabla^{k_3 +1 }u|| \nabla^{k_4 +1 }u|, 
\end{align*}
where $k_j\in\mathbb{N}_0$ and $k_1 + k_2 + \dots  = m_2$. Note  that $ m_2 \leq k-3$ since $ m_1 > 0$. 
In the following we use the notation \eqref{eins} - \eqref{drei} for all five cases, setting $ i = 0$ for the latter three. 

Combining the above considerations with  Lemma \ref{Moser-estimate}, we can now estimate the norm 
$$ \norm{\nabla^{m_1}(I-P_u)\nabla^{m_2}(\mathcal{N}(u))}{L^2},$$
where we distinguish five cases according to the terms in the brackets in \eqref{eins} - \eqref{drei}.

\smallskip

\noindent
\textit{Case 1:}~ $\nabla^{k_1}u_t \star \nabla^{k_2}u_t$

We use Lemma \ref{Moser-estimate} with
$$ f_1 = \nabla u,~ \dots,~ f_j = \nabla u,~ f_{j+1} = \nabla u,~\dots,~ f_{j + i} = \nabla u,~ f_{j + i +1} = u_t,~f_{j + i+2} = u_t,$$
and derivatives of order 
$$ \tilde{k}_1 + \dots + \tilde{k}_j + \tilde{m}_1 + \dots + \tilde{m}_i + k_1 + k_2  = m_1 + m_2 -i - j = k-2 - (i+j). $$
Employing also  Young's inequality, it follows 
\begin{align*}
&\norm{| \nabla^{\tilde{k}_1 +1}u | \cdots  |\nabla^{ \tilde{k}_j +1}u||\nabla^{\tilde{m}_1 + 1}u | \cdots |\nabla^{\tilde{m}_i+1}u|| \nabla^{k_1 }u_t| | \nabla^{k_2}u_t }{L^2}\\ \nonumber
&\quad\lesssim \big( (1+\norm{\nabla u}{L^{\infty}}^{k-3})\norm{u_t}{L^{\infty}}^2 +(1+ \norm{\nabla u}{L^{\infty}}^{k-2})\norm{u_t}{L^{\infty}}\big)
( \norm{\nabla u}{H^{k-2 -i-j}} +  \norm{ u_t}{H^{k-2 -i-j}})\\\nonumber
&\quad\lesssim ( 1 + \norm{\nabla u}{L^{\infty}}^{k-1}+ \norm{u_t}{L^{\infty}}^{k-1})( \norm{\nabla u}{H^{k-1}} +  \norm{ u_t}{H^{k-2}}).
\end{align*}
The other cases will be treated similarly. Note that here and in the following the $ L^{\infty}$ norms and especially $\norm{ u_t }{L^{\infty}}$ are bounded by our choice of $ k$.

\smallskip

\noindent
\textit{Case 2:}~ $\nabla^{k_1+2}u \star \nabla^{k_2+2}u$

Here it is exploited that $m_1>0$ in $I_1$ due to the cancellation from \eqref{Leibniz}. 
This time Lemma \ref{Moser-estimate} is applied with $f_1= \dots = f_{j + i+2} = \nabla u$
and derivatives of order 
$$ \tilde{k}_1 + \dots + \tilde{k}_j + \tilde{m}_1 + \dots + \tilde{m}_i + k_1  + k_2 +2   = m_1 + m_2 +2  -i - j = k - (i+j) \leq k-1 $$
since $j > 0$ by \eqref{eq:nabla-I-Pu}.  We estimate
\begin{align*}
&\norm{| \nabla^{\tilde{k}_1 +1}u | \cdots  |\nabla^{ \tilde{k}_j +1}u||\nabla^{\tilde{m}_1 + 1}u | \cdots 
    |\nabla^{\tilde{m}_i+1}u|| \nabla^{k_1+2 }u| | \nabla^{k_2+2}u| }{L^2}\\
&\quad \lesssim \sum_{i,j}\norm{\nabla u}{L^{\infty}}^{i + j +1}\norm{\nabla u}{H^{k -i-j}}
 \lesssim ( 1+ \norm{\nabla u}{L^{\infty}}^{k-1})\norm{\nabla u}{H^{k -1}}.
\end{align*}

\noindent
\textit{Case 3:}~ $\nabla^{k_1+3}u \star \nabla^{k_2+1}u$

As in  the previous case,  $C(1+  \norm{\nabla u}{L^{\infty}}^{k-1}) \norm{\nabla u}{H^{k-1}}$ dominates
\begin{align*}
&\norm{| \nabla^{\tilde{k}_1 +1}u | \cdots  |\nabla^{ \tilde{k}_j +1}u||\nabla^{\tilde{m}_1 + 1}u | \cdots 
 |\nabla^{\tilde{m}_i+1}u|| \nabla^{k_1+3 }u| | \nabla^{k_2+1}u| }{L^2}. 
\end{align*}

\noindent
\textit{Case 4:}~ $\nabla^{k_1+1}u \star \nabla^{k_2+1}u \star \nabla^{k_3+2}u$

We apply Lemma \ref{Moser-estimate}  to the functions $f_1 = \dots  =f_{j + i+3} = \nabla u$ with 
derivatives of order 
$$ \tilde{k}_1 + \dots + \tilde{k}_j + \tilde{m}_1 + \dots + \tilde{m}_i + k_1   + k_2 + k_3 +1 = m_1 + m_2 +1    -i - j = k -1- (i+j), $$
leading to  the bound
\begin{align*}
&\norm{| \nabla^{\tilde{k}_1 +1}u | \cdots  |\nabla^{ \tilde{k}_j +1}u||\nabla^{\tilde{m}_1 + 1}u | \cdots
  |\nabla^{\tilde{m}_i+1}u|| \nabla^{k_1+1 }u| | \nabla^{k_2+1}u| | \nabla^{k_3+2}u|  }{L^2}\\
&\quad \lesssim  \sum_{i,j}\norm{\nabla u}{L^{\infty}}^{i +j+2} \norm{\nabla u}{H^{k-2 -i-j}}
 \lesssim ( 1 + \norm{\nabla u}{L^{\infty}}^{k}) \norm{\nabla u}{H^{k-1}}.
\end{align*}

\noindent
\textit{Case 5:}~ $\nabla^{k_1+1}u \star \nabla^{k_2+1}u \star \nabla^{k_3+1}u \star \nabla^{k_4+1}u$

We now use Lemma \ref{Moser-estimate} with $f_1 = \dots  =f_{j + i+4} = \nabla u$
and derivatives of order 
$$ \tilde{k}_1 + \dots + \tilde{k}_j + \tilde{m}_1 + \dots + \tilde{m}_i + k_1   + k_2 + k_3 + k_4  = m_1 + m_2    -i - j = k -2- (i+j). $$
Hence, we have
\begin{align*}
&\norm{| \nabla^{\tilde{k}_1 +1}u | \cdots  |\nabla^{ \tilde{k}_j +1}u||\nabla^{\tilde{m}_1 + 1}u | \cdots |\nabla^{\tilde{m}_i+1}u|| \nabla^{k_1+1 }u| | \nabla^{k_2+1}u| | \nabla^{k_3+1}u| | \nabla^{k_4+1}u| }{L^2}\\\nonumber
&\quad \lesssim  \sum_{i,j}\norm{\nabla u}{L^{\infty}}^{i + j +3}\norm{\nabla u}{H^{k-2 -i-j}} 
 \lesssim ( 1 + \norm{\nabla u}{L^{\infty}}^{k+1}) \norm{\nabla u}{H^{k-1}}.
\end{align*}
Summing up the five cases, we infer
\begin{equation}\label{est:I1}
\|I_1\|_{L^1} \lesssim ( 1 + \norm{\nabla u}{L^{\infty}}^{k-1}+ \norm{u_t}{L^{\infty}}^{k-1})( \norm{\nabla u}{H^{k-1}} +  \norm{ u_t}{H^{k-2}}).
\end{equation}

\smallskip

Next, in $I_2$ from \eqref{eq:energy-ep} we integrate by parts in order to conclude
\begin{align*}\int_{\R^n} I_2 \, dx 
&= \sum_{\substack{l_1 + l_2 = k-2\\ l_1 > 0}} \int_{\R^n} \nabla^{k-3}( \mathcal{N}(u))\star [ \nabla^{l_1 + 1}( I - P_u) \nabla^{l_2}u_t]\,dx\\
&\quad + \sum_{\substack{l_1 + l_2 = k-2\\ l_1 > 0}} \int_{\R^n} \nabla^{k-3}( \mathcal{N}(u))\star [ \nabla^{l_1 }( I - P_u) \nabla^{l_2+1}u_t]\,dx\\
&=: I_2^1 + I_2^2.
\end{align*}
These terms are estimated by 
\begin{align}
|I_2^1| \lesssim \sum_{\substack{l_1 + l_2 = k-2\\ l_1 > 0}} \norm{\nabla^{k-3}(\mathcal{N}(u))}{L^2}\norm{\nabla^{l_1 + 1}( I - P_u) \nabla^{l_2}u_t}{L^2},\\
|I_2^2| \lesssim \sum_{\substack{l_1 + l_2 = k-2\\ l_1 > 0}} \norm{\nabla^{k-3}(\mathcal{N}(u))}{L^2}\norm{\nabla^{l_1 }( I - P_u) \nabla^{l_2+1}u_t}{L^2}.
\end{align}
We control $\norm{\nabla^{k-3}(\mathcal{N}(u))}{L^2}$ by terms of the form \eqref{eins} - \eqref{drei} in the $ L^2 $ norm, obtaining as above
\begin{align*}
\norm{\nabla^{k-3}(\mathcal{N}(u))}{L^2} \lesssim ( 1 + \norm{\nabla u}{L^{\infty}}^k + \norm{u_t}{L^{\infty}}^{k-2})( \norm{\nabla u}{H^{k-1}} + \norm{u_t}{H^{k-2}}).
\end{align*}
Equation \eqref{eq:nabla-I-Pu} and  Lemma \ref{Moser-estimate} further imply 
\begin{align*}
\norm{\nabla^{l_1 + 1}( I - P_u) \nabla^{l_2}u_t}{L^2} 
&\lesssim  \sum_{j = 1}^{l_1+1} \sum_{\sum_{i=1}^j \tilde{m}_i = l_1+1-j}  \norm{|\nabla^{\tilde{m}_1 + 1}u | \cdots |\nabla^{\tilde{m}_i+1}u|
   | \nabla^{l_2}u_t|}{L^2}\\\nonumber
& \lesssim ( 1 + \norm{\nabla u}{L^{\infty}}^{k-1} + \norm{u_t}{L^{\infty}}^{k-1})( \norm{\nabla u}{H^{k-1}} + \norm{u_t}{H^{k-2}}) 
\end{align*}
where  $ \tilde{m}_1 + \dots + \tilde{m}_i +l_2 = k - 1  -i  \leq k -2$. Similarly, we have
\begin{align*}
\norm{\nabla^{l_1 }( I - P_u) \nabla^{l_2+1}u_t}{L^2} 
  &\lesssim\sum_{j=1}^{l_1} \sum_{\sum_{i=1}^j \tilde{m}_i = l_1-j} \norm{|\nabla^{\tilde{m}_1 + 1}u | \cdots |\nabla^{\tilde{m}_i+1}u|| \nabla^{l_2+1}u_t|}{L^2}\\\nonumber
& \lesssim ( 1 + \norm{\nabla u}{L^{\infty}}^{k-2} + \norm{u_t}{L^{\infty}}^{k-2})( \norm{\nabla u}{H^{k-1}} + \norm{u_t}{H^{k-2}}) 
\end{align*}
by Lemma \ref{Moser-estimate} with $ \tilde{m}_1 + \dots + \tilde{m}_i +l_2 + 1 = k - 1  -i  \leq k -2,$ since $l_1>0$.
The above three inequalities  yield
\begin{align}\label{est:I2}
\norm{I_2 }{L^1} \lesssim ( 1 + \norm{\nabla u}{L^{\infty}}^{2k-1} + \norm{u_t}{L^{\infty}}^{2k-1})( \norm{\nabla u}{H^{k-1}}^2 + \norm{u_t}{H^{k-2}}^2).
\end{align}

Finally, for the regularization term, we observe
\begin{align*}
- \varepsilon \int_{\R^n} \nabla^{k-2}[ ( I- P_u)(\Delta u_t)] \nabla^{k-2}u_t\,dx &=~~\varepsilon \int_{\R^n} \nabla^{k-3}[ ( I- P_u)(\Delta u_t)]\nabla^{k-1} u_t\,dx\\
&\leq C \norm{\nabla^{k-3}[ ( I- P_u)(\Delta u_t)]}{L^2}^2 + \f{\varepsilon}{2}\norm{\nabla^{k-1}u_t}{L^2}^2.
\end{align*}
In view of \eqref{eq:Nep},  to bound  $ \norm{\nabla^{k-3}[ ( I- P_u)(\Delta u_t)]}{L^2}^2 $  it suffices to estimate  
\begin{align}
&\norm{|\nabla^{\tilde{m}_1 + 1}u | \cdots |\nabla^{\tilde{m}_i + 1}u| \big[| \nabla^{k_1 + 1 }u_t| | \nabla^{k_2 +1} u| + | \nabla^{k_1  }u_t| | \nabla^{k_2+2}u| \big]}{L^2}^2,\\
&\norm{|\nabla^{\tilde{m}_1 + 1}u | \cdots |\nabla^{\tilde{m}_i + 1}u|| \nabla^{k_1 }u_t| | \nabla^{k_2 + 1}u|| \nabla^{k_3 +1 }u|}{L^2}^2,
\end{align}
where $ \tilde{m}_1 + \dots + \tilde{m}_i + k_1 + k_2 +1  = k - 2 - i$ and $ \tilde{m}_1 + \dots + \tilde{m}_i + k_1 + k_2 + k_3  = k - 3 - i$, respectively.
As before, Lemma \ref{Moser-estimate} implies the inequalities
\begin{align}\label{final9}
&\norm{|\nabla^{\tilde{m}_1 + 1}u | \cdots |\nabla^{\tilde{m}_i + 1}u| \big[| \nabla^{k_1 + 1 }u_t| | \nabla^{k_2 +1} u| + | \nabla^{k_1  }u_t| | \nabla^{k_2+2}u| \big]}{L^2}^2\\ \nonumber
&\quad \lesssim ( 1 + \norm{\nabla u}{L^{\infty}}^{2(k-2)} + \norm{u_t}{L^{\infty}}^{2(k-2)})( \norm{u_t}{H^{k-2}}^2 + \norm{\nabla u}{H^{k-2}}^2, \\
\label{final10}
&\norm{|\nabla^{\tilde{m}_1 + 1}u | \cdots |\nabla^{\tilde{m}_i + 1}u|| \nabla^{k_1 }u_t| | \nabla^{k_2 + 1}u|| \nabla^{k_3 +1 }u|}{L^2}^2\\ \nonumber
&\quad \lesssim ( 1 + \norm{\nabla u}{L^{\infty}}^{2(k-1)} + \norm{u_t}{L^{\infty}}^{2(k-1)})( \norm{u_t}{H^{k-2}}^2 + \norm{\nabla u}{H^{k-2}}^2). 
\end{align}
Putting together \eqref{est:I1},  \eqref{est:I2}, \eqref{final9} and \eqref{final10}, we arrive at the inequality
\begin{align*}
&\left| \int_{\R^n}\nabla^{k-2}(\mathcal{N}(u) - \varepsilon (I-P_u)(\Delta u_t)) \cdot \nabla^{k-2}u_t\,dx\right|\\
&\quad \lesssim ( 1 + \norm{\nabla u}{L^{\infty}}^{2k} + \norm{u_t}{L^{\infty}}^{2k})( \norm{\nabla u}{H^{k-1}}^2 + \norm{u_t}{H^{k-2}}^2) 
   + \f{\varepsilon}{2} \norm{\nabla^{k-1} u_t}{L^2}^2.
\end{align*}
Subtracting the last term
on both sides of \eqref{energy-estimate}, for $ t \in [0, T]$ we conclude
\begin{align}\label{est:k}
&\norm{\nabla^{k-2}u_t(t)}{L^2}^2 + \norm{\nabla^k u(t)}{L^2}^2 + \f{\varepsilon}{2} \int_0^t \norm{\nabla^{k-1}u_t(s)}{L^2}^2\,ds \nonumber\\
&\quad \lesssim \int_0^t \Big[( 1 + \norm{\nabla u}{L^{\infty}}^{2k} + \norm{u_t}{L^{\infty}}^{2k})( \norm{\nabla u}{H^{k-1}}^2 
  + \norm{u_t}{H^{k-2}}^2)\Big]ds
  + \norm{\nabla^{k-2}u_1}{L^2}^2 + \norm{\nabla^k u_0}{L^2}^2.
\end{align}

  It remains to bound the lower order terms.  Testing \eqref{regularization} by $u_t \in T_uN $, we infer
\begin{align}\label{est:2}
\norm{u_t(t)}{L^2}^2 + \norm{\Delta  u(t)}{L^2}^2 + \varepsilon \int_0^t \norm{\nabla u_t(s)}{L^2}^2\,ds 
   =  \norm{u_1}{L^2}^2 + \norm{ \Delta  u_0}{L^2}^2.
\end{align}
Since also
$$ \f{d}{dt} \int_{\R^n} |\nabla u|^2 \,dx \leq \int_{\R^n} | u_t|^2\,dx + \int_{\R^n} |\Delta  u|^2\,dx, $$
it follows
\begin{align}\label{est:1}
 \|\nabla u(t)\|_{L^2}^2 &\le \|\nabla u_0 \|_{L^2}^2 + \int_0^t \norm{\Delta  u(s)}{L^2}^2  + \norm{u_t(s)}{L^2}^2  ds\\
 &= \|\nabla u_0 \|_{L^2}^2 + t(  \norm{u_1}{L^2}^2 + \norm{ \Delta  u_0}{L^2}^2)\notag
\end{align}
for $ t \in [0,T]$. The other derivatives are treated via interpolation, more precisely
\begin{align*}
\norm{\nabla^l u_t}{L^2}^2 &\lesssim \norm{\nabla^{k-1}u_t}{L^2}^{\f{2(l-1)}{k-2}}\norm{\nabla u_t}{L^2}^{\f{2(k-1-l)}{k-2}},\quad l = 2,\dots, k-2,\\
\norm{\nabla^l u_t}{L^2}^2 &\lesssim \norm{\nabla^{k-2}u_t}{L^2}^{\f{2l}{k-2}}\norm{u_t}{L^2}^{\f{2(k-2-l)}{k-2}},\quad l = 1,\dots, k-3,\\
\norm{\nabla^l u}{L^2}^2 &\lesssim \norm{\nabla^{k}u}{L^2}^{\f{2(l-2)}{k-2}}\norm{\Delta u}{L^2}^{\f{2(k-l)}{k-2}},\quad  l = 3,\dots, k-1.
\end{align*}
Estimate \eqref{est:k} and the above inequalities lead to the core estimate
 \begin{align}\label{final-estimate}
&\norm{u_t(t)}{H^{k-2}}^2 + \norm{ \nabla u(t)}{H^{k-1}}^2 + \f{\varepsilon}{2} \int_0^t \norm{\nabla u_t(s)}{H^{k-2}}^2\,ds\\ \nonumber
 &\quad \lesssim \int_0^t \left[( 1 + \norm{\nabla u}{L^{\infty}}^{2k} + \norm{u_t}{L^{\infty}}^{2k})( \norm{\nabla u}{H^{k-1}}^2 
   + \norm{u_t}{H^{k-2}}^2)  \right] \,ds\\ \nonumber
 &\qquad + (1+T)(\norm{u_1}{H^{k-2}}^2 + \norm{  \nabla u_0}{H^{k-1}}^2), \qquad t \in [0,T].
 \end{align}
 for   solutions of \eqref{Reg-Cauchy-problem} and $T<T_{\epsilon,m}$. Using Gronwall's lemma we also obtain
\begin{align}\label{blowup2}
 & \sup_{t \in [0,T]}\left(\norm{u_t(t)}{H^{k-2}}^2 + \norm{ \nabla u(t)}{H^{k-1}}^2\right)\\ \nonumber
 & \quad \leq C (1+T) \left( \norm{u_1}{H^{k-2}}^2 + \norm{  \nabla u_0}{H^{k-1}}^2 \right)\exp\left(\int_0^T ( 1 + \norm{\nabla u}{L^{\infty}}^{2k} 
     + \norm{ u_t}{L^{\infty}}^{2k})\,ds\right).
\end{align}

At least for small times we want to  remove the dependence on $u$ on the right-hand side of \eqref{final-estimate} and thus we introduce the quantity
\[\alpha(t)=   \|\nabla u(t) \|_{L^2}^2 + \|\Delta  u(t)\|_{L^2}^2 + \norm{\nabla^k u(t)}{L^2}^2 + \norm{u_t(t)}{L^2}^2+ \norm{\nabla^{k-2}u_t(t)}{L^2}^2 \]
for $t\in [0, T_{\epsilon,m})$. We observe that $\alpha(t)$ is equivalent to the square of the Sobolev norms appearing in \eqref{final-estimate}.
Since the solutions to \eqref{Reg-Cauchy-problem} are (locally) unique, our reasoning is also valid for any initial
time $t_0\in(0, T_{\epsilon,m})$. The estimates \eqref{est:k}, \eqref{est:2} and \eqref{est:1} thus imply
\[\alpha(t)- \alpha(t_0)\le C\int_{t_0}^t (1+\alpha(s)^k)\alpha(s)\,ds.\]
By the above arguments, the function $\alpha$ is differentiable a.e.\ so that 
  \begin{align}\label{diff-estimate}
   \f{d}{dt}\E(t) \leq C ( 1 + \E(t)^k)\E(t)
    \end{align}
for a.e.\ $ 0\le t_0 \le t < T_{\varepsilon,m}$.   
 We now proceed similarly to \cite{kenig2010cauchy}, where regularization by the (intrinsic) biharmonic energy  has been applied in order to obtain 
 the existence of local Schr\"odinger maps. 
 \begin{Lemma}\label{uniform-energy-estimates}
 Let $ \varepsilon \in (0,1)$ and take data $ u_0, u_1 : \R^n \to \R^L$ with $u_0(x) \in N$ and $u_1(x) \in T_{u_0(x)}N$ 
 for a.e.\ $x \in \R^n $ satisfying
 $$ (\nabla u_0, u_1) \in H^{k-1}(\R^n) \times H^{k-2}(\R^n) \text{ \ \ for some $ k \in \N$ with $ k > \lfloor \tfrac{n}{2} \rfloor + 2 $.}  $$ 
 Let $T_{\varepsilon,m}>0$ be the maximal existence time of the solution
 $ u^{\varepsilon} : \R^n \times [0, T_{\varepsilon,m}) \to N $ of \eqref{Reg-Cauchy-problem} with $u^\varepsilon(0)=u_0$ and $\partial_t u^\varepsilon(0)=u_1$ 
 from Proposition \ref{Corollary-existence}. 
 Then there is a time $ T_0 = T_0(\| \nabla u_0\|_{H^{k-1}}, \| u_1\|_{H^{k-2}}) > 0 $ such that $T_{\varepsilon,m}> T_0$ for all $ \varepsilon \in (0,1)$.
 \end{Lemma}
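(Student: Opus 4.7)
The plan is to use the differential inequality \eqref{diff-estimate} for $\alpha(t)$, which holds uniformly in $\varepsilon$, as an ODE comparison to produce an $\varepsilon$-independent lower bound on the existence time, followed by a continuation argument to show that the maximal existence time $T_{\varepsilon,m}$ cannot be smaller than this bound.

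First I would set $\alpha_0 = \alpha(0)$ and note that $\alpha_0$ is controlled by $\|\nabla u_0\|_{H^{k-1}}^2 + \|u_1\|_{H^{k-2}}^2$, with a constant depending only on $n$ and $k$. I would then consider the scalar Cauchy problem $\phi'(t) = C(1+\phi(t)^k)\phi(t)$ with $\phi(0) = \alpha_0$, which has a unique maximal solution on some interval $[0, T^*)$ with $T^*=T^*(\alpha_0) > 0$ (finite, since $\phi$ blows up as $\phi \nearrow \infty$). Standard separation of variables gives
\[
CT^* = \int_{\alpha_0}^{\infty} \frac{d\phi}{\phi(1+\phi^k)},
\]
a finite integral depending only on $\alpha_0$. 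I would then set $T_0 := T^*/2$ and $M := \phi(T_0) < \infty$.

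The comparison step: since $\alpha \in W^{1,1}_{\mathrm{loc}}$ satisfies the pointwise inequality \eqref{diff-estimate} a.e.\ on $[0, T_{\varepsilon,m})$ and $\alpha(0) = \phi(0)$, a standard Gronwall-type argument for scalar ODEs yields $\alpha(t) \leq \phi(t) \leq M$ for all $t \in [0, \min\{T_0, T_{\varepsilon,m}\})$. Since $C$ and hence $T_0, M$ are independent of $\varepsilon$, this produces a uniform-in-$\varepsilon$ a priori bound on $\|\nabla u^\varepsilon(t)\|_{H^{k-1}}^2 + \|u_t^\varepsilon(t)\|_{H^{k-2}}^2$ on $[0,\min\{T_0,T_{\varepsilon,m}\})$.

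The remaining step, which I expect to be the only mildly subtle point, is to use this a priori bound to rule out $T_{\varepsilon,m} \leq T_0$ by contradiction. Suppose $T_{\varepsilon,m} \leq T_0$. Pick any $t_1 \in [0, T_{\varepsilon,m})$; by continuity of the solution in $H^k\times H^{k-2}$ the pair $(u^\varepsilon(t_1), u^\varepsilon_t(t_1))$ satisfies the hypotheses of Lemma \ref{Lemma-existence} with $\|\nabla u^\varepsilon(t_1)\|_{H^{k-1}}^2 + \|u^\varepsilon_t(t_1)\|_{H^{k-2}}^2 \leq M$. Invoking Lemma \ref{Lemma-existence} (and Proposition \ref{Corollary-existence}) with this initial datum produces a solution on an interval $[t_1, t_1+\tau]$ with $\tau = \tau(\varepsilon, M) > 0$ independent of $t_1$ (since the local existence time in \eqref{def:Tdelta} depends only on $\varepsilon$ and the size of the initial data, both of which are now fixed). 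Choosing $t_1$ with $T_{\varepsilon,m} - t_1 < \tau$ and gluing yields an extension of $u^\varepsilon$ beyond $T_{\varepsilon,m}$, contradicting the definition of the maximal existence time. Hence $T_{\varepsilon,m} > T_0$, and since $T_0 = T_0(\|\nabla u_0\|_{H^{k-1}}, \|u_1\|_{H^{k-2}})$, the lemma is proved.
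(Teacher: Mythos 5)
Your proposal is correct and follows essentially the same strategy as the paper: integrate the $\varepsilon$-independent differential inequality \eqref{diff-estimate} to get a uniform bound on $\alpha$ up to a time $T_0$ depending only on the data, then rule out $T_{\varepsilon,m}\le T_0$ by restarting the fixed-point argument of Lemma \ref{Lemma-existence} at a time $t_1$ close to $T_{\varepsilon,m}$, using that the local existence time \eqref{def:Tdelta} depends only on $\varepsilon$ and the (uniformly bounded) size of the data at $t_1$. The only cosmetic difference is that you integrate \eqref{diff-estimate} by ODE comparison/separation of variables, whereas the paper does it via an explicit logarithmic substitution; both yield the same $\varepsilon$-independent $T_0$ and bound.
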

 \begin{proof}
 Let $ \varepsilon \in (0,1)$ and $t\in[0,T_{\varepsilon,m})$. We write  $u = u^{\varepsilon}$.
>From \eqref{diff-estimate} we infer
 \begin{align}\label{log-bound}
 \f{d}{dt} \log\left( \f{\E}{( 1 + \E^k)^{\f{1}{k}}}\right) = \f{\E'}{( 1 + \E^k)\E} \leq C,
 \end{align}
With $\alpha_0=\alpha(0)$ it follows 
 \begin{align*} 
 \f{\E(t)^k}{( 1 + \E(t)^k)} &\leq e^{C t k}\f{\E_0^k}{( 1 + \E_0^k)} \leq ( 1 + 4Ctk ) \f{\E_0^k}{( 1 + \E_0^k)},\\
\E(t)^k &\leq (1 + 4 C t k)\E_0^k + 4 C t k \E_0^k \E^k
\end{align*}
 for $0 \leq t  \le \f{1}{8Ck}$, and hence
\begin{align*}
\E(t)^k \leq 2(1 + 4Ctk)\E_0^k \leq 3 \E_0^k
 \end{align*}
 for  $0 \leq t \le \f{1}{8Ck}\min\{ 1, \f{1}{\E_0^k}\}=:T_0$. Since $\alpha$ and the Sobolev norms are equivalent, we infer
 \begin{align}\label{uniform-energy-inequality}
\norm{u_t(t)}{H^{k-2}}^2 + \norm{\nabla u(t)}{H^{k-1}}^2 \leq c_0( \norm{u_1}{H^{k-2}}^2 + \norm{\nabla u_0}{H^{k-1}}^2) 
 \end{align}
 for $t \in [0, \min\{T_{\varepsilon,m}, T_0\})$ and some constant $c_0=c_0(k,n)>0$.
 
 We now assume by contradiction that $T_{\varepsilon,m} \le T_0$ for some (fixed) $\varepsilon\in(0,1)$.
We apply the contraction argument in the proof of Lemma \ref{Lemma-existence} for the initial time  $ t_0 \in [0, T_{\varepsilon,m})$ and data $(u(t_0), u_t(t_0))$ 
in the  fixed-point space $ \mathcal{B}_r(T) $ with radius 
 $$r^k = 3 r(t_0)^k := 3\Big( \norm{\nabla u(t_0)}{H^{k-1}} + \norm{ u_t(t_0)}{H^{k-2}}\Big)^k.$$
 Since $t_0<T_0$, estimate \eqref{uniform-energy-inequality} yields the uniform bound
 \[r(t_0)\le \sqrt{2c_0} ( \norm{u_1}{H^{k-2}}^2 + \norm{\nabla u_0}{H^{k-1}}^2)^{1/2}=:\hat{c}_0.\] 
 As a result, the time 
 \begin{align*}
 T := \f{1}{4}\min \left\{ \left(\f{\sqrt[k]{3}-1}{\sqrt[k]{3}}\right)^2 \f{\varepsilon}{\hat{C}^2(1 + 3 \hat{c}_0^k)^2}, \f{\varepsilon }{\hat{C}^2 ( 1 + 6 \hat{c}_0^k)^2} \right \}.
 \end{align*}
  is less or equal than the time $T_\delta$ for $ \mathcal{B}_r(T)$ in \eqref{def:Tdelta}.  Therefore, 
 the solution can be uniquely extended to $ [0, t_0 + T] $ in the regularity class of Proposition \ref{Corollary-existence}.
 For $t_0 > T_{\varepsilon,m}-T$ this fact contradicts the maximality  of $T_{\varepsilon,m}$, showing the result.
 \end{proof}

\section{Proof of the main theorem}\label{main-theorem-proof}
We now combine the existence result from Proposition \ref{Corollary-existence} with Lemma~\ref{uniform-energy-estimates}.
Thus, there exists a solution $ u^{\varepsilon } : \R^n \times [0, T_0] \to N $ of \eqref{Reg-Cauchy-problem} for each $\varepsilon \in (0,1)$, 
where $ T_0 > 0 $ only depends on $ \norm{\nabla u_0}{H^{k-1}}$ and $\norm{ u_1}{H^{k-2}}$. From \eqref{uniform-energy-inequality} and the inequality
$$ \norm{u^{\varepsilon} - u_0}{L^{\infty}_tL^2_x} \leq T_0 \norm{u^{\varepsilon}_t}{L_t^{\infty }L^2_x},$$
we extract a limit $ u : \R^n \times [0, T_0] \to \R^L$ as $ \varepsilon \to 0^+$
of the solutions $ u^{\varepsilon}_{|_{[0, T_0]}}$ in the sense 
\begin{align*}
&\nabla^{l_1} u^{\varepsilon} \overset{*}{\rightharpoonup}\nabla^{l_1} u,~~~u^{\varepsilon} - u_0 \overset{*}{\rightharpoonup} u - u_0,
~~\text{ and }~~\nabla^{l_2-2} u_t^{\varepsilon} \overset{*}{\rightharpoonup}  \nabla^{l_2-2} u_t~~\text{ in}~~ L^{\infty}(0,T_0; L^2),
\end{align*}
where $ 1 \leq l_1 \leq k $ and $ 0 \leq l_2 \leq k $. (Here and below we do not indicate that we pass to subsequences.) In particular,
$$ u - u_0 \in L^{\infty}(0,T_0; H^k) \cap W^{1, \infty}(0,T_0;H^{k-2}) $$
and $(\nabla u, \partial_t u) $ is weakly continuous in $ H^{k-1} \times H^{k-2}$. We first assume $ k \geq 4$ (which is no restriction if $n\ge 2$).
Estimating the nonlinearity similarly to Section \ref{uniform-energy}, we also deduce from \eqref{eq:Nep} and \eqref{uniform-energy-inequality} 
that $ \partial_t^2 u^{\varepsilon } \in C^0([0,T_0], H^{k-4})$ is uniformly bounded as $ \varepsilon \to 0^+$. Compactness and Sobolev's embedding 
further yield
\begin{align}\label{eq:conv}
& \nabla^3 u^{\varepsilon} \to \nabla^3 u~~\text{ in}~~C^0([0,T_0],L^2_{loc}(\R^n)),\notag\\
& \partial_tu^{\varepsilon} \to \partial_t u,~~u^{\varepsilon} \to u,~~ \nabla u^{\varepsilon} \to \nabla u,~~ \nabla^2 u^{\varepsilon} \to \nabla^2 u
  ~~\text{locally~uniformly~on}~\R^n\!\times [0, T_0].
\end{align}
 More precisely for $ \alpha \in (0,1) $ and $ v^{\varepsilon} = u^{\varepsilon} - u_0$, our a priori estimates and  \cite[Prop. 1.1.4]{lunardisemigroup}
 imply uniform bounds (in $ \varepsilon$) in the spaces
\begin{align}
v^{\varepsilon} \in C^{\alpha} H^{k - 2 \alpha},~~\nabla  v^{\varepsilon} \in C^{\alpha}H^{k-1 - 2 \alpha},~~ 
  \nabla^2  v^{\varepsilon}\in C^{\alpha}H^{k -2 - 2 \alpha},~~\partial_t v^{\varepsilon} \in C^{\alpha}H^{k -2 - 2 \alpha}.   
\end{align}
As a result, $u$ takes values in $N$. Moreover, since \eqref{final-estimate} and \eqref{uniform-energy-inequality} give
\begin{align}
 &  \int_0^{T} \norm{\sqrt{\varepsilon}\nabla u^{\varepsilon}_t(s)}{H^{k-2}}^2\,ds\\[5pt]\nonumber
 &\quad \lesssim \big( T_0  (1 + \norm{u_1}{H^{k-2}}^{2k} + \norm{  \nabla u_0}{H^{k-1}}^{2k}) + 1\big) (\norm{u_1}{H^{k-2}}^2 + \norm{  \nabla u_0}{H^{k-1}}^2)
\end{align}
and $k\ge 3$, we infer that $ \varepsilon \Delta \partial_t u^{\varepsilon}  \to 0 $ in $L^2_{t,x} $. Combining this fact with \eqref{eq:conv}
and recalling \eqref{Reg-expansion}, we conclude
\begin{align*}
\mathcal{N}_{\varepsilon}(u^{\varepsilon}) \to \mathcal{N}(u) \quad \text{ in }~ L^2_{loc}(\R^n \times [0, T_0]).
\end{align*}

In the case $ n = 1$ and $ k = 3 $ we obtain the convergence 
$ \mathcal{N}_{\varepsilon}(u^{\varepsilon})\to \mathcal{N}(u)$ in the sense of the duality $(H^1, H^{-1}) $ because we  still have
$$ \nabla u^{\varepsilon} \to \nabla u,\quad  \nabla^2 u^{\varepsilon} \to \nabla^2 u, \quad \partial_t u^{\varepsilon} \to \partial_t u$$
locally uniformly, as well as $ \nabla^3 u^{\varepsilon} \to \nabla^3 u$ and $ \nabla\partial_t u^{\varepsilon} \to  \nabla\partial_t u$ in $C^0([0,T_0], H^{-1}_{loc})$ 
as $ \varepsilon \to 0^+$.

Summing up, we have constructed a local solution $u:[0, T_0]\times \R^n\to N$ of \eqref{expansion} with $u(0)=u_0$ and $\partial_t u(0)=u_1$
such that $(\nabla u, \partial_t u) $ is bounded and weakly continuous in $ H^{k-1} \times H^{k-2}$.

\smallskip

In  Lemma \ref{uniqueness} it will be shown that such a solution is locally unique. We recall from the proof of Proposition \ref{uniform-energy-estimates} 
that the solution $u : \R^n \times [0, T) \to N $ for some $ T > 0 $ can be extended  if 
$ \limsup_{t \to T^-}( \norm{\nabla u(t)}{H^{k-1}} + \norm{u_t(t)}{H^{k-2}} ) < \infty $. There thus exists a maximal time of existence 
$ T_m \in (T_0, \infty] $  of $u$ with
$$\limsup_{t \to T_m^-}( \norm{\nabla u(t)}{H^{k-1}} + \norm{u_t(t)}{H^{k-2}} ) = \infty \qquad \text{if \ } T_m< \infty.$$

Arguing as in Section \ref{uniform-energy}, we establish the energy equality
\begin{align}\label{identity-final}
\norm{\nabla^{k} u}{L^2}^2 + \norm{\nabla^{k-2}u_t}{L^2}^2 &=  2 \int_0^t\int_{\R^n} \nabla^{k-2}( \mathcal{N}(u))\cdot \nabla^{k-2}u_t\,dx\,ds\\ \nonumber
&~~~  + \norm{\nabla^{k} u_0}{L^2}^2 + \norm{\nabla^{k-2} u_1}{L^2}^2
\end{align} 
for $t\in [0,T_m)$. (The integral is well-defined in view of the cancellation of one derivative in \eqref{eq:cancel}.) However, in contrast to the approximations 
$u^\varepsilon $, the solution $u$ has only $k$ weak spatial derivatives (and $\partial_t u$ has $k-2)$. For this reason, when deriving  \eqref{identity-final} 
we have to replace one spatial derivative by a difference quotient. The details are  outlined in Appendix \ref{appendix-identity-final}.

We conclude that the highest derivatives $\nabla^{k-2} u_t, \nabla^{k}u:[0, T_m)\to L^2$ are continuous, employing their weak continuity and that 
the right-hand side of \eqref{identity-final} is continuous in $t$. The continuity of the lower order derivatives can be shown as in the next section, so that 
%
$$  u - u_0 \in C^0([0,T_m), H^k)\cap C^1([0,T_m), H^{k-2})$$
as asserted.
Finally, following the proof of the a priori estimate in Section \ref{uniform-energy} we can derive the blow-up criterion  \eqref{blowup}, 
cf.\ Appendix \ref{appendix-identity-final}.

To show Theorem \ref{maintheorem} it thus remains to establish the uniqueness statement and the continuous dependence on the initial data, which is done in the 
next Sections \ref{uni} and \ref{conti}.

\section{Uniqueness}\label{uni}
\begin{Lemma}\label{uniqueness}
Let $ u, v : \R^n \times [0, T] \to N$ be two solutions of \eqref{EL-condition} with initial data $ u_0 :  \R^n \to N$ and  $u_1 : \R^n \to \R^L  $ 
such that $ u_1 \in T_{u_0} N $ on $ \R^n $ and 
$$ (\nabla u_0, u_1) \in H^{k-1} (\R^n) \times H^{k-2}(\R^n)$$
for some $ k \in \N$ with $ k > \lfloor \f{n}{2} \rfloor +2 $. Also let
$$ u - u_0,~ v- u_0  \in L^{\infty}(0, T; H^{k}(\R^n))\cap W^{1, \infty}(0, T; H^{k-2}(\R^n)). $$
Then $ u_{\vert_{ [0,T]}} = v_{\vert_{ [0,T]}}.$
\end{Lemma}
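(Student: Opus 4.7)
Set $w := u - v$, which solves
$$\partial_t^2 w + \Delta^2 w = \mathcal{N}(u) - \mathcal{N}(v)$$
with $w(0) = \partial_t w(0) = 0$, and lies in $L^\infty(0,T;H^k)\cap W^{1,\infty}(0,T;H^{k-2})$. The strategy is an energy estimate at a low regularity level, closed by a Gronwall argument. Concretely, I work with
$$F(t) := \|w(t)\|_{L^2}^2 + \|w_t(t)\|_{L^2}^2 + \|\Delta w(t)\|_{L^2}^2,$$
noting $F(0) = 0$, and aim to prove $F'(t) \le C F(t)$ on $[0, T]$ so that Gronwall forces $F\equiv 0$ and hence $u \equiv v$.

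The critical term in $F'(t)$ is $I(t) := \int (\mathcal{N}(u) - \mathcal{N}(v))\cdot w_t\,dx$, which naively involves $\nabla^3 w$ and is not controlled by $F$. I exploit the geometric perpendicularity $\mathcal{N}(u)\perp T_u N\ni u_t$, $\mathcal{N}(v)\perp T_v N\ni v_t$ together with $u_t = P_u u_t$, $v_t = P_v v_t$, rewriting
$$I(t) = -\int \mathcal{N}(u)\cdot (P_v - P_u)v_t\,dx - \int \mathcal{N}(v)\cdot (P_u - P_v)u_t\,dx.$$
No derivatives of $w$ appear on the right; instead $|P_v - P_u|\lesssim |w|$ pointwise. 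Lemma~\ref{Moser-estimate} applied to the expansion \eqref{expansion} bounds $\|\mathcal{N}(u)\|_{L^2}, \|\mathcal{N}(v)\|_{L^2}$ uniformly on $[0,T]$, and $H^{k-2}\hookrightarrow L^\infty$ (since $k - 2 > \lfloor n/2\rfloor$) yields uniform $L^\infty$ bounds on $u_t, v_t$. Hence $|I(t)|\lesssim \|w\|_{L^2}$.

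This alone only yields $F'\lesssim \sqrt{F}$, which does not close the Gronwall loop. To upgrade to a truly linear bound I also test the equation against $w$ and invoke a sharper geometric fact: since both $u(x,t), v(x,t)$ lie on the smooth compact submanifold $N$, the normal component of the chord is second-order, $|(I - P_u)(u - v)|\lesssim |u - v|^2 = |w|^2$ pointwise (globally, using compactness of $N$). Combined with $\mathcal{N}(u)\cdot P_u w = 0$ and the analogous identity for $v$, this produces
$$\int (\mathcal{N}(u) - \mathcal{N}(v))\cdot w\,dx \;\lesssim\; \|w\|_{L^4}^2,$$
and Gagliardo--Nirenberg interpolation (combined with the uniform $H^k$-bound on $w$, enlarging the energy if $n$ is large) controls $\|w\|_{L^4}^2$ by $C F(t)$. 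Combining the two multiplier identities through a modified energy $\tilde F := F + \mu\int w\, w_t\,dx$ with a small $\mu > 0$, the $\|\Delta w\|^2$ contribution from the $w$-multiplier absorbs the weak $\sqrt{F}$-term from the $w_t$-multiplier, yielding $\tilde F'(t) \le C \tilde F(t)$. Gronwall with $\tilde F(0) = 0$ gives $\tilde F\equiv 0$, hence $u \equiv v$ on $[0, T]$.

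\textbf{Main obstacle.} The dependence of $\mathcal{N}$ on $\nabla^3$ is genuinely outside the scope of any energy controlling only $\Delta w$; only the perpendicularity $\mathcal{N}\perp TN$ can remove this loss. The first application produces the linear factor $|P_v - P_u|\lesssim |w|$, which is sub-linear in $F$ and insufficient for uniqueness. The second key ingredient is the chord estimate $(I - P_u)(u - v) = O(|w|^2)$ — a consequence of the smoothness of $N$, accessible only through the $w$-multiplier — which supplies the extra power of $w$ needed to upgrade to a linear Gronwall inequality. Balancing the two multiplier identities through a suitable modified energy is the technical heart of the argument.
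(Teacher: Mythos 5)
Your two geometric identities are correct and are indeed the same cancellations the paper exploits: $(\mathcal{N}(u)-\mathcal{N}(v))\cdot w_t=-\mathcal{N}(u)\cdot(P_v-P_u)v_t-\mathcal{N}(v)\cdot(P_u-P_v)u_t$ uses $\mathcal{N}\perp TN$, $u_t=P_uu_t$, and the chord estimate $|(I-P_u)(u-v)|\lesssim|w|^2$ is also fine on a compact $N$. The gap is in how you try to close the Gronwall loop at the lowest energy level. The $w_t$-multiplier gives only $|I(t)|\le C\,\norm{w}{L^2}\le C\sqrt{F(t)}$, where $C$ is a \emph{fixed} constant built from $\norm{\mathcal{N}(u)}{L^2},\norm{\mathcal{N}(v)}{L^2}$ and $L^\infty$ norms of $u_t,v_t$; this quantity has no structural relation to $\norm{\Delta w}{L^2}$, so the proposed absorption into the $-\mu\norm{\Delta w}{L^2}^2$ term coming from the $w$-multiplier cannot work. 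Any attempt to absorb via Young's inequality, $C\sqrt{F}\le \eta F+C^2/(4\eta)$, leaves a non-vanishing constant, and Gronwall for $\tilde F'\le C_1\tilde F+C_2$ with $\tilde F(0)=0$ gives $\tilde F(t)\le (C_2/C_1)(e^{C_1t}-1)\neq0$. Nor does a differential inequality $F'\lesssim \sqrt{F}$ (or $F'\lesssim F^{1-\theta}$ after interpolating $\nabla^3w$ between $H^2$ and $H^k$) with $F(0)=0$ force $F\equiv0$: the Osgood condition fails and $y'=Cy^{1/2}$, $y(0)=0$ has the nontrivial solution $y=(Ct/2)^2$. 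So the "technical heart" of your argument, as stated, does not produce uniqueness. (A secondary soft spot: $\norm{w}{L^4}^2\lesssim F$ via $H^2\hookrightarrow L^4$ needs $n\le 8$; "enlarging the energy" for larger $n$ reintroduces exactly the derivative-loss problem you are trying to avoid.)

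The underlying obstruction is that after your pointwise cancellations the prefactors $\mathcal{N}(u)$, $\mathcal{N}(v)$, or $\mathcal{N}(u)-\mathcal{N}(v)$ are merely \emph{bounded} in $L^2$ (they contain $\nabla^3u$, $\nabla^3v$, $\nabla^3w$), not small in terms of $F$, which is why you can never get better than $\sqrt{F}$ out of the $w_t$-multiplier at the $L^2$ level. The paper's proof avoids this by running the energy estimate one derivative \emph{below} the solutions' regularity, with $\mathcal{E}(t)=\norm{w(t)}{H^{k-1}}^2+\norm{w_t(t)}{H^{k-3}}^2$: it differentiates the equation $k-3$ times, uses the same decomposition $\mathcal{N}(u)-\mathcal{N}(v)=(P_v-P_u)\mathcal{N}(u)+(I-P_v)(\mathcal{N}(u)-\mathcal{N}(v))$ together with $(I-P_v)w_t=(P_u-P_v)u_t$, and integrates by parts once so that every term is bounded by $\mathcal{E}(t)$ times powers of the a priori norms $\norm{\nabla u}{H^{k-1}},\norm{u_t}{H^{k-2}},\norm{\nabla v}{H^{k-1}},\norm{v_t}{H^{k-2}}$; the lost derivative always lands on $u$ or $v$ (which have one more derivative available) rather than on $w$. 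That is what makes the inequality genuinely linear in $\mathcal{E}$, and it is the step your low-order scheme cannot reproduce. To repair your argument you would essentially have to redo the estimate at the level $H^{k-1}\times H^{k-3}$ as in Section \ref{uni}.
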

\begin{proof}[Proof of Lemma \ref{uniqueness}]
We derive the uniqueness statement from a Gronwall argument based on the equality
\begin{align}\label{derivative}
   \f{d}{dt}\f{1}{2}\int_{\R^n} |\nabla^{l} w_t|^2 + | \nabla^{l+2} w|^2\,dx = \int_{\R^n} \nabla^{l}( \mathcal{N}(u) - \mathcal{N}(v))\cdot \nabla^{l} w_t \,dx,
\end{align}
for $w=u-v$, $l \in \{ 0, \dots, k-3\} $ and $t\in[0,T]$, which is a consequence of \eqref{expansion}. Setting
$$ \mathcal{E}(t) = \norm{w(t)}{H^{k-1}}^2 + \norm{w_t(t)}{H^{k-3}}^2,$$
we want to prove
\begin{align}
	\f{d}{dt}\mathcal{E}(t) \leq C ( 1 + \norm{\nabla u}{H^{k-1}}^{2k} + \norm{u_t}{H^{k-2}}^{2k} + \norm{\nabla v}{H^{k-1}}^{2k} + \norm{v_t}{H^{k-2}}^{2k}) \mathcal{E}(t)
\end{align}
for $t \in [0, T]$. We first estimate \eqref{derivative} in the case $ l = k-3$.
Since $ u$ and  $v $ map into $N$, we have $ \mathcal{N}(u) = (I-P_u)(\mathcal{N}(u))$ and analogously for $v$. It follows
\begin{align*}
	\mathcal{N}(u) - \mathcal{N}(v) &= ( I - P_u)\mathcal{N}(u) - (I- P_v)\mathcal{N}(v)\\
	& = (P_v - P_u)  \mathcal{N}(u) + (I-P_v)(\mathcal{N}(u) - \mathcal{N}(v)),
\end{align*}
and hence
\begin{align*}
	\nabla^{k-3}(\mathcal{N}(u) - \mathcal{N}(v))\cdot \nabla^{k-3}w_t &= \nabla^{k-3} [(P_v - P_u)  \mathcal{N}(u)] \cdot \nabla^{k-3} w_t\\
	&~~~ + \nabla^{k-3}[(I-P_v)(\mathcal{N}(u) - \mathcal{N}(v))] \cdot \nabla^{k-3} w_t.
\end{align*}
In this way, we can avoid that all derivatives fall on $ \nabla^3 w $. We next write
\begin{align*}
	\nabla^{k-3} [(P_v - P_u)  \mathcal{N}(u)] \cdot \nabla^{k-3} w_t &= (P_v - P_u) \nabla^{k-3} [ \mathcal{N}(u)] \cdot \nabla^{k-3}w_t\\
	&\quad + \!\!\!\sum_{\substack{l_1 + l_2 = k-3 \\ l_1 > 0} }\!\!\nabla^{l_1}[(P_v - P_u)] \star \nabla^{l_2} [ \mathcal{N}(u)] \cdot \nabla^{k-3}w_t = : I_1 + I_2.
\end{align*}
Observe that
$$ \int_{\R^n} I_1 \,dx \lesssim \norm{w}{L^{\infty}} \norm{\nabla^{k-3} \mathcal{N}(u)}{L^2} \norm{\nabla^{k-3}w_t}{L^2}.$$
We then control $ \norm{\nabla^{k-3} \mathcal{N}(u)}{L^2} $ using Lemma \ref{Moser-estimate} as above for the a priori estimate \eqref{final-estimate}.
Further, Lemma \ref{uniqueness-lemma1}  implies that $ \int_{\R^n} I_2\,dx $ is bounded by terms of the form
\begin{align}
	&\norm{w}{L^{\infty}} \norm{|\nabla^{m_1 + 1}u| \cdots |\nabla^{m_j + 1}u | | \nabla^{l_2} \mathcal{N}(u)| }{L^2} \norm{\nabla^{k-3}w_t}{L^2}, \label{I}\\ 
	 &\norm{\nabla^{k-3}w_t}{L^2}\norm{|\nabla^{m_1 +1}w| | \nabla^{m_2 +1}h_1| \cdots |\nabla^{m_j + 1} h_{j-1}||\nabla^{l_2}\mathcal{N}(u)|}{L^2},\label{II}
\end{align}
where $m_1,\dots, m_j$ and $h_1,\dots, h_{j-1} $ are as in Lemma \ref{uniqueness-lemma1}. In \eqref{I} we then estimate as above in the a priori estimate. 
For \eqref{II}, it suffices to control terms of the form
\begin{align}\label{III}
	|\nabla^{m_1 +1}w| | \nabla^{m_2 +1}h_1| \cdots |\nabla^{m_j + 1} h_{j-1}|| \nabla^{\tilde{m}_1 + 1} u| \cdots  
	    |\nabla^{\tilde{m}_i + 1} u| \left[ | \nabla^{k_1} u_t | | \nabla^{k_2}u_t| \cdots \right ],
\end{align}
where $ \left[ | \nabla^{k_1} u_t | | \nabla^{k_2}u_t| \cdots \right ] $ is given as in the nonlinearity $ \mathcal{N}(u)$ and the orders 
$ m_1 \dots, m_j$, $\tilde{m}_1, \dots,\tilde{m}_i,$ and  $k_1, k_2 \dots $  are as used before. To apply Lemma \ref{Moser-estimate}, as above we choose  
$$ f_1 = w,~ f_2 = \nabla h_1,~ \dots, f_{j} = \nabla h_{j-1},f_{ j + 1} = \nabla u,\dots, f_{i+j} = \nabla u, $$
and $ f_{i+j+1},$ $f_{i+j+2}, \dots$, 
according to the respective terms in $\mathcal{N}(u)$. 
We can thus estimate \eqref{III} in $L^2$ by 
\begin{align*}
&\left(\norm{w}{L^{\infty}}^{1 - \f{m_1}{k - 2 -i-j}} \norm{w}{H^{k-2-i-j}}^{\f{m_1}{k-2 - i - j}} 
    + \norm{w}{L^{\infty}}^{1 - \f{m_1}{k - 1 -i-j}} \norm{w}{H^{k-1-i-j}}^{\f{m_1}{k-1 - i - j}}\right)\\
&\qquad \cdot(1 + \norm{\nabla u}{H^{k-1}}^{2k} + \norm{u_t}{H^{k-2}}^{2k} + \norm{\nabla v}{H^{k-1}}^{2k} + \norm{v_t}{H^{k-2}}^{2k})\\
&\ \ \lesssim \norm{w}{H^{k-1}}(1 + \norm{\nabla u}{H^{k-1}}^{2k} + \norm{u_t}{H^{k-2}}^{2k} + \norm{\nabla v}{H^{k-1}}^{2k} + \norm{v_t}{H^{k-2}}^{2k}),
\end{align*}
noting that $l_1 > 0,$ $j \geq 1 $ and $ i + j < k-2$. We  continue by computing
\begin{align*}
&\nabla^{k-3}[(I-P_v)(\mathcal{N}(u) - \mathcal{N}(v))] \cdot \nabla^{k-3} w_t\\[4pt]
& = \nabla^{k-3}(\mathcal{N}(u)\! -\! \mathcal{N}(v)) (I- P_v)\nabla^{k-3}w_t
   +\!\! \sum_{\substack{l_1 + l_2 = k-3 \\ l_1 > 0} } \!\nabla^{l_1}(I - P_v)\star \nabla^{l_2}(\mathcal{N}(u)\! -\! \mathcal{N}(v)) \cdot \nabla^{k-3}w_t \\
&= \nabla^{k-3}(\mathcal{N}(u) \!- \!\mathcal{N}(v))\nabla^{k-3}[(P_u \!-\! P_v) u_t] 
- \!\!\!\!  \sum_{\substack{l_1 + l_2 = k-3 \\ l_1 > 0} }\!\!\! \nabla^{k-3}(\mathcal{N}(u)\! - \!\mathcal{N}(v))\! \cdot\! \nabla^{l_1}[(I-P_v)] \!\star\! \nabla^{l_2}w_t\\
&\quad + \sum_{\substack{l_1 + l_2 = k-3 \\ l_1 > 0} } \nabla^{l_1}(I - P_v)\star \nabla^{l_2}(\mathcal{N}(u) - \mathcal{N}(v)) \cdot \nabla^{k-3}w_t =: J_1 + J_2 + J_3.
\end{align*}
where the second equality is a consequence of
\begin{align*}
	(I-P_v) w_t = (I-P_v)u_t = [(I-P_v) - (I-P_u)]u_t = (P_u - P_v )u_t.
\end{align*}
We use integration by parts to treat $ \int J_1\,dx$ and $\int J_2 \,dx$. Here we assume that $ k \geq 4$. (If $k = 3$ the estimate becomes easier 
and we only employ integration by parts for  $ dP_v( \nabla^3 w \star \nabla u )$ in the difference $ \mathcal{N}(u) - \mathcal{N}(v)$.) It follows
\begin{align*}
\int_{\R^n} J_1 \,dx &= - \int_{\R^n} \nabla^{k-4}[\mathcal{N}(u) - \mathcal{N}(v)] \cdot \nabla^{k-2 }[(P_u - P_v)u_t]\,dx,\\
\int_{\R^n} J_2 \,dx &= \sum_{\substack{l_1 + l_2 = k-3\\l_1>0}}\int_{\R^n} \nabla^{k-4}[\mathcal{N}(u)-\mathcal{N}(v)] \cdot [\nabla^{l_1+1 }(I-P_v)\star\nabla^{l_2}w_t\\
   &\qquad + \nabla^{l_1 }(I-P_v)\star \nabla^{l_2+1}w_t  ]\,dx.
    \end{align*}
We first bound
\begin{align*}
  \int_{\R^n} J_1 \,dx \lesssim \norm{\nabla^{k-4}[\mathcal{N}(u) - \mathcal{N}(v)] }{L^2}\norm{\nabla^{k-2 }[(P_u - P_v)u_t]}{L^2}.
\end{align*}
Corollary \ref{uniqueness-corollary1}, Lemma \ref{uniqueness-lemma1} and Lemma \ref{Moser-estimate} yield
\begin{align*}
    \norm{\nabla^{k-4}[\mathcal{N}(u) - \mathcal{N}(v)] }{L^2} &\lesssim (\norm{ w }{H^{k-1}} + \norm{w_t}{H^{k-3}})\\
      &\qquad \cdot(1 + \norm{\nabla u}{H^{k-1}}^{2k} + \norm{u_t}{H^{k-2}}^{2k} + \norm{\nabla v}{H^{k-1}}^{2k} + \norm{v_t}{H^{k-2}}^{2k}),\\
\norm{\nabla^{k-2 }[(P_u - P_v)u_t]}{L^2} &\lesssim \norm{w}{H^{k-1}}(1 + \norm{\nabla u}{H^{k-1}}^{2k} + \norm{u_t}{H^{k-2}}^{2k} 
+ \norm{\nabla v}{H^{k-1}}^{2k} + \norm{v_t}{H^{k-2}}^{2k}).
\end{align*}
 The integrals of $J_2$ and $J_3$ are treated similary.   Summing up, we obtain
    \begin{align*}
    \f{d}{dt} \int_{\R^n}\!\!\! | \nabla^{k-3}w_t|^2 \!+ \! |\nabla^{k-1} w|^2\,dx\lesssim \mathcal{E}(t)(1 + \norm{\nabla u}{H^{k-1}}^{2k} + \norm{u_t}{H^{k-2}}^{2k} + \norm{\nabla v}{H^{k-1}}^{2k} + \norm{v_t}{H^{k-2}}^{2k}).
    \end{align*}
We can similarly  derive the  estimate (integrating $ dP_v(\nabla^3w \star \nabla u)$ by parts) 
    \begin{align*}
    \f{d}{dt} \int_{\R^n} | w_t|^2 + |\Delta w|^2\,dx \lesssim \mathcal{E}(t)(1 + \norm{\nabla u}{H^{k-1}}^{2k} + \norm{u_t}{H^{k-2}}^{2k} + \norm{\nabla v}{H^{k-1}}^{2k} + \norm{v_t}{H^{k-2}}^{2k}).
    \end{align*}
Interpolation on the left-hand side then yields 
    $$ \f{d}{dt}\mathcal{E}(t) \lesssim \mathcal{E}(t)(1 + \norm{\nabla u(t)}{H^{k-1}}^{2k} + \norm{u_t(t)}{H^{k-2}}^{2k} + \norm{\nabla v(t)}{H^{k-1}}^{2k} 
    + \norm{v_t(t)}{H^{k-2}}^{2k}).$$
By assumption,  we have $ \mathcal{E}(0) = 0$ and
    $$ \sup_{t \in [0, T]}( \norm{\nabla u(t)}{H^{k-1}}^{2k} + \norm{u_t(t)}{H^{k-2}}^{2k} + \norm{\nabla v(t)}{H^{k-1}}^{2k} + \norm{v_t(t)}{H^{k-2}}^{2k} ) < \infty,$$
so that $ \mathcal{E} = 0$ on $[0,T]$ as asserted.
\end{proof}

\section{Continuity of the flow map}\label{conti}
We now prove that the solutions of the Cauchy problem for \eqref{EL-condition} depend continuously on the initial data. As seen in the previous section, 
the difference $u - v $ of two solutions $u$ and $v$ satisfies estimates in which one loses a derivative
compared the a priori bounds such as \eqref{final-estimate} for the solutions $u$ and $v$ themselves. To deal with this problem, we apply the Bona--Smith argument, which is 
outlined e.g.\ in \cite{Tzvetkov} (for the Burgers equation) and in \cite{erdogan_tzirakis} (for the KdV equation); see also the references therein.

Let $T_m$ be the maximal existence time of the solution $u$  with initial data $(u_0,u_1)$ from Theorem \ref{maintheorem}. Fix $T_0\in(0,T_m)$. 
Take data  $(v_0,v_1)$ as in the theorem satisfying
\begin{equation}\label{est:data}
\|(u_0,u_1)- (v_0,v_1)\|_{H^k\times H^{k-2}}\le R
\end{equation}
for some $R>0$.  (We note that we have to assume $ u_0 - v_0 \in L^2 $ in order to establish the a priori estimate for 
the difference of the solutions as in the Section \ref{uni}.) 
We use regularized  data $(u_0^{\delta}, u_1^{\delta})$ and $ (v_0^{\delta}, v_1^{\delta})$ in the sense of Lemma \ref{approximation-lemma} 
from Appendix \ref{section-appendix2}, where $\delta\in(0,\delta^*]$ for some $\delta^*>0$ depending on $N$.
The corresponding solutions are denoted by $u^\delta$ and $v^\delta$. They satisfy the regularity assertions    
of part a) of Theorem \ref{maintheorem} for \emph{all} $k>\lfloor \tfrac{n}{2}\rfloor +2$.  It is crucial that the a priori estimates for $u^{\delta}$ and $v^{\delta}$ 
are uniform in $\delta$. We split $u-v$ into
$$ u - v = u - u^{\delta} + u^{\delta} - v^{\delta} + v^{\delta} - v$$
and bound each of the differences in $H^k \times H^{k-2}$.

In order to estimate  $ u^{\delta} - u $ and $v^{\delta} - v$, we use the geometric structure (as before in Section  \ref{uni}). It allows us to fix
a (small) parameter $\delta>0$ for which the differences are small in  $H^k \times H^{k-2}$. This can be done uniformly for $(v_0,v_1)$ in a certain
ball around $(u_0,u_1)$. For fixed $\delta$, one can then estimate  $u^{\delta} - v^{\delta}$ employing their extra regularity,
but paying the price of a large constant (arising from the small parameter $\delta$). We can control this constant, however, by choosing 
a small radius $R>0$ in \eqref{est:data}.

 We start with some preparations concerning the cancellations caused by the geometric constraints. As in Section  \ref{uni}, we have
\begin{align}
\mathcal{N}(u^{\delta}) - \mathcal{N}(u) &= (P_{u} - P_{u^\delta})( \mathcal{N}(u^{\delta})) + (I- P_u)( \mathcal{N}(u^{\delta}) - \mathcal{N}(u)),\notag\\
(I-P_u)( u^{\delta} - u)_t &= (P_{u^{\delta}}  - P_u)u^{\delta}_t. \label{geometric-structure}
\end{align}
We then calculate (again similar to Section  \ref{uni})
\begin{align} \label{identity}
\int_{\R^n}&\nabla^{k-2}(\mathcal{N}(u^{\delta}) - \mathcal{N}(u)) \cdot \nabla^{k-2} (u^{\delta} - u)_t\,dx \\ \nonumber
&=  \int_{\R^n}(P_{u}-P_{u^\delta}) \nabla^{k-2}[\mathcal{N}(u^{\delta})]\cdot \nabla^{k-2}(u^{\delta}-u)_t\,dx\\ \nonumber
&\quad+ \sum_{\substack{l_1+ l_2 = k-2 \\ l_1 > 0}} \int_{\R^n}\nabla^{l_1}[(P_{u^{\delta}}-P_u)]\star  \nabla^{l_2}\mathcal{N}(u^{\delta})
   \cdot  \nabla^{k-2}(u^{\delta}-u)_t\,dx\\ \nonumber
&\quad+ \sum_{\substack{l_1+ l_2 = k-2 \\ l_1 > 0}} \int_{\R^n}\nabla^{l_1}(I-P_u) \star\nabla^{l_2}[\mathcal{N}(u^{\delta}) - \mathcal{N}(u)]
  \cdot \nabla^{k-2}(u^{\delta}-u)_t\,dx\\ \nonumber
&\quad+ \int_{\R^n} \nabla^{k-2}[\mathcal{N}(u^{\delta}) - \mathcal{N}(u)]\cdot (I-P_u)\nabla^{k-2}(u^{\delta}-u)_t\,dx. 
\end{align}
Using integration by parts and \eqref{geometric-structure}, the last term is rewritten as
\begin{align}\label{identity2}
\int_{\R^n}& \nabla^{k-2}[\mathcal{N}(u^{\delta}) - \mathcal{N}(u)]\cdot (I-P_u)\nabla^{k-2}(u^{\delta}-u)_t\,dx \\
&= \sum_{\substack{l_1+ l_2 = k-2 \\ l_1 > 0}} \int_{\R^n}\nabla^{k-3}[\mathcal{N}(u^{\delta}) - \mathcal{N}(u)]\cdot 
    \nabla (\nabla^{l_1}(I - P_u) \star\nabla^{l_2}(u^{\delta}-u)_t )\,dx\notag\\
&\quad-\sum_{\substack{l_1+ l_2 = k-1 \\ l_1 > 0}}\int_{\R^n} \nabla^{k-3}[\mathcal{N}(u^{\delta}) - \mathcal{N}(u)]\cdot 
      \nabla^{l_1} [(P_{u^{\delta}}  - P_u)]\star \nabla^{l_2} u^{\delta}_t\,dx\notag\\
&\quad- \int_{\R^n} \nabla^{k-3}[\mathcal{N}(u^{\delta}) - \mathcal{N}(u)]\cdot  (P_{u^{\delta}}  - P_u) \nabla^{k-1}u^{\delta}_t\,dx,  \notag
\end{align}
which is well defined by the higher regularity of $ u^{\delta}$. Technically this has to be established by difference quotients as 
in Appendix \ref{appendix-identity-final}, however we omit the details here. The advantage of estimating $u^{\delta} - u$ is that the \textit{bad~terms} 
(with respect to the regularity of $u$) 
\begin{equation}\label{eq:bad}
\norm{\nabla^{k-2}\mathcal{N}(u^{\delta})}{L^2} \quad \text{ and } \quad \norm{\nabla^{k-1}u^{\delta}_t}{L^2}
\end{equation}
will be bounded by the regularized initial data from Lemma \ref{approximation-lemma}. Their norm will grow  as $ \delta \to  0^+$ in a controlled way.
Moreover, when estimating \eqref{identity} and \eqref{identity2}, these bad terms only appear in the products
\begin{align*}
&\norm{u^{\delta}- u}{L^{\infty}}\norm{\nabla^{k-2}\mathcal{N}(u^{\delta})}{L^2}\norm{\nabla^{k-2}(u^{\delta}- u)_t}{L^2},\\
&\norm{u^{\delta}- u}{L^{\infty}}\norm{\nabla^{k-3}(\mathcal{N}(u^{\delta}) - \mathcal{N}(u))}{L^2}\norm{\nabla^{k-1}u^{\delta}_t}{L^2}.
\end{align*}
Here the decay of $\norm{u^{\delta}- u}{L^{\infty}}$ as $ \delta \to 0^+$ will compensate the growth in \eqref{eq:bad}. We now carry out the details
in several steps.

\smallskip

\textit{Step~1.} Since $T_0<T_m$, we have the bound
$$ \sup_{t \in [0, T_0]}\left( \norm{\nabla u(t)}{H^{k-1}} + \norm{u_t(t)}{H^{k-2}}\right) =: \overline{C} < \infty.$$
Lemma \ref{approximation-lemma} allows us to fix a parameter $\delta_1'\in(0,\delta^*]$ depending on $(u_0,u_1)$ such that  
\begin{equation} \label{est0}
\norm{(\nabla u_0^\delta, u_1^\delta)}{H^{k-1}\times H^{k-2}}\le 3 \overline{C}/2
\end{equation}
for all $\delta\in (0, \delta_1']$. We let $\delta\in (0, \delta_1']$ and  also $R\le \overline{C}/2$ in \eqref{est:data}. Hence
\begin{align}\label{est1}
\norm{(\nabla v_0, v_1)}{H^{k-1}\times H^{k-2}} &\leq \norm{(\nabla u_0, u_1)}{H^{k-1}\times H^{k-2}}  + R \leq  3\overline{C}/2,\\ \label{est2}
\norm{(\nabla v_0^\delta, v_1^\delta)}{H^{k-1}\times H^{k-2}} 
&\le \norm{(\nabla u_0^\delta, u_1^\delta)}{H^{k-1}\times H^{k-2}} + 2C_0R \le 2\overline{C}.
\end{align}
Here the constant $C_0\ge1$ is given by \eqref{fünfapp} and we have chosen $0<R<\min\{1, \overline{C}/(4C_0)\}=:R_0$.
We define a time $\tilde{T}_0>0$ as in  Lemma \ref{uniform-energy-estimates}, replacing $\alpha(0)$ there by a multiple of $\overline{C}$.
We then combine the uniform a priori bound \eqref{uniform-energy-inequality} for the approximate solution to the $\epsilon$--problem for $v$
on $ [0, \tilde{T}_0]$ with \eqref{est1}. Likewise one treats $ u^{\delta}$ and $v^{\delta}$ using \eqref{est0} and \eqref{est2}, respectively. Following
the existence proof  in Section  \ref{main-theorem-proof}, we then see that the solutions 
$ u_{_{[0, \tilde{T}_0]}},$ $v_{_{[0, \tilde{T}_0]}},$  $u^{\delta}_{_{[0, \tilde{T}_0]}},$ and  $v^{\delta}_{_{[0, \tilde{T}_0]}}$
exist on $[0, \tilde{T}_0]$. Proceeding as in Sections  \ref{uniform-energy} and \ref{uni}, we further obtain a constant
$ \tilde{C} = \tilde{C}(N,k,\tilde{T}_0) > 0$ such that
\begin{align}
\norm{\nabla u}{H^{m}}^2 + \norm{u_t}{H^{m-1}}^2 &\leq \tilde{C} (\norm{\nabla u_0}{H^{m}}^2 + \norm{u_1}{H^{m-1}}^2),\label{a-priori-est1}\\
\norm{\nabla v}{H^{m}}^2 + \norm{v_t}{H^{m-1}}^2 &\leq \tilde{C}(\norm{\nabla v_0}{H^{m}}^2 + \norm{v_1}{H^{m-1}}^2) ,\label{a-priori-est2}\\
\norm{ u -v}{H^{m}}^2 + \norm{u_t -v_t}{H^{m-2}}^2 &\leq \tilde{C} (\norm{ u_0 - v_0}{H^{m}}^2 + \norm{u_1 - v_1}{H^{m-2}}^2).\label{a-priori-est-diff}
\end{align}
on $ [0, \tilde{T}_0]$ and for orders $ m \in \{2, \dots,k-1\}$.  Analogously, $ u^{\delta} $ and $ v^{\delta} $ satisfy 
the estimates \eqref{a-priori-est1} respectively \eqref{a-priori-est2}, and the differences $ u - u^{\delta}$,  $v - v^{\delta}$ and $u^{\delta} - v^{\delta} $
fufill  \eqref{a-priori-est-diff} with the same constant $ \tilde{C}$ independent of $\delta\in (0, \delta^*]$.
For the regularized data we can replace here $k$ by $k+1$, deriving 
\begin{align}\label{a-piori-est1-higher}
\norm{\nabla u^{\delta}}{H^{k}}^2 + \norm{u_t^{\delta}}{H^{k-1}}^2 &\leq \tilde{C} (\norm{\nabla u_0^{\delta}}{H^{k}}^2 + \norm{u_1^{\delta}}{H^{k-1}}^2),\\
\norm{\nabla v^{\delta}}{H^{k}}^2 + \norm{v_t^{\delta}}{H^{k-1}}^2 &\leq \tilde{C} (\norm{\nabla v_0^{\delta}}{H^{k}}^2 + \norm{v_1^{\delta}}{H^{k-1}}^2).\notag
\end{align}


\smallskip

\textit{Step~2.} Estimating  \eqref{identity} and \eqref{identity2}  as in Section  \ref{uni}, we derive
\begin{align*}
\f{d}{dt}\big(\norm{  u - u^{\delta}}{H^{k}}^2 + \norm{ u_t - u^{\delta}_t}{H^{k-2}}^2 \big) 
&\leq C \norm{u - u^{\delta}}{L^{\infty}} \norm{\nabla^{k-2} \mathcal{N}(u^{\delta})}{L^2} \norm{\nabla^{k-2}(u_t - u^{\delta}_t)}{L^2}\\
&\ \ + C   \norm{ u - u^{\delta}}{L^{\infty}} \! \norm{\nabla^{k-3}(\mathcal{N}(u^{\delta})\! -\! \mathcal{N}(u))}{L^2} \! \norm{\nabla^{k-1}u^{\delta}_t}{L^2}\\
&\ \ + C (\norm{  u -  u^{\delta}}{H^{k}}^2 + \norm{u_t - u^{\delta}_t}{H^{k-2}}^2)
\end{align*}
for some $ C = C(N,\overline{C}, \tilde{C}) > 0 $.  The nonlinearities are treated as in Sections  \ref{uniform-energy} and \ref{uni}.
Using also \eqref{a-priori-est1}, \eqref{a-priori-est-diff} and \eqref{a-piori-est1-higher}, we then conclude
\begin{align*}
\f{d}{dt}&\big(\norm{  u - u^{\delta}}{H^{k}}^2 + \norm{ u_t - u^{\delta}_t}{H^{k-2}}^2 \big)\\
& \leq C\norm{u - u^{\delta}}{H^{k-1}} ( 1+ \norm{\nabla u^{\delta} }{H^{k}}+ \norm{u^{\delta}_t }{H^{k-2}})(\norm{u_t}{H^{k-2}} + \norm{u^{\delta}_t}{H^{k-2}})\\
&\quad + C\norm{u - u^{\delta}}{H^{k-1}}(1+  \|\nabla u\|_{H^{k-1}} + \|\nabla u^\delta\|_{H^{k-1}} + \|u_t\|_{H^{k-3}} + \| u_t^\delta\|_{H^{k-3}} )
  \norm{ u^{\delta}_t}{H^{k-1}} \\ &\quad + C (\norm{  u -  u^{\delta}}{H^{k}}^2 + \norm{u_t - u^{\delta}_t}{H^{k-2}}^2)\\
&\leq C(\norm{u_0 - u_0^{\delta}}{H^{k-1}} +\norm{u_t - u^{\delta}_t}{H^{k-3}}) (1+\norm{\nabla u_0^{\delta} }{H^{k}} + \norm{ u^{\delta}_1}{H^{k-1}})\\
&\quad+ C (\norm{  u -  u^{\delta}}{H^{k}}^2 + \norm{u_t - u^{\delta}_t}{H^{k-2}}^2)
\end{align*}
on $[0, \tilde{T}_0]$. Gronwall's inequality  and Lemma \ref{approximation-lemma} thus yield
\begin{align*}
&\sup_{ t \in [0, \tilde{T}_0]}\big(\norm{  u -  u^{\delta}}{H^k}^2 + \norm{ u_t - u^{\delta}_t}{H^{k-2}}^2\big) \\
&\quad\leq \f{C\tilde{T}_0}{\sqrt{\delta}}  \big(\norm{u_0 - u_0^{\delta}}{H^{k-1}}  +\norm{u_1 - u_1^{\delta}}{H^{k-3}}\big)
+ C \big(\norm{ u_0 - u^{\delta}_0}{H^{k}}^2 + \norm{u_1 - u^{\delta}_1}{H^{k-2}}^2\big) = o(1) 
\end{align*}
as $\delta \to 0^+.$ In view of our a priori bounds, we can estimate  $ v - v^{\delta} $  in the same way.
Here we have to split the initial values, obtaining
\begin{align*}
\sup_{ t \in [0, \tilde{T}_0]}&\big(\norm{  v -  v^{\delta}}{H^k}^2 + \norm{ v_t - v^{\delta}_t}{H^{k-2}}^2\big)\\
&\leq \f{C\tilde{T}_0}{\sqrt{\delta}}( \norm{v_0 - v_0^{\delta}}{H^{k-1}} + \norm{v_1 - v_1^{\delta}}{H^{k-3}}) + C (\norm{ v_0 - v^{\delta}_0}{H^{k}}^2 + \norm{v_1 - v^{\delta}_1}{H^{k-2}}^2)\\
&\leq \f{C\tilde{T}_0}{\sqrt{\delta}}( \norm{u_0 - u_0^{\delta}}{H^{k-1}} + \norm{u_1-u_1^{\delta}}{H^{k-3}}) + C (\norm{u_0 - u_0^{\delta}}{H^{k}}^2 + \norm{u_1 - u_1^{\delta}}{H^{k-2}}^2 )\\
&\quad+\f{C\tilde{T}_0}{\sqrt{\delta}}(\norm{u_0 - v_0}{H^{k-1}} + \norm{u_1 - v_1}{H^{k-3}} + \norm{u_0^{\delta} - v_0^{\delta}}{H^{k-1}} + \norm{u_1^{\delta} - v_1^{\delta}}{H^{k-3}} )\\
&\quad+ C (\norm{u_0 - v_0}{H^{k}}^2 + \norm{u_1 - v_1}{H^{k-2}}^2 + \norm{u_0^{\delta} - v_0^{\delta}}{H^{k}}^2 + \norm{u_1^{\delta} - v_1^{\delta}}{H^{k-2}}^2 ).
\end{align*}
Lemma \ref{approximation-lemma} now implies that
\begin{align*}
\sup_{ t \in [0, \tilde{T}_0]}&\big(\norm{  v -  v^{\delta}}{H^k}^2 + \norm{ v_t - v^{\delta}_t}{H^{k-2}}^2\big)\\
&\leq \f{C\tilde{T}_0}{\sqrt{\delta}}( \norm{u_0 - u_0^{\delta}}{H^{k-1}} + \norm{u_1-u_1^{\delta}}{H^{k-3}}) + C (\norm{u_0 - u_0^{\delta}}{H^{k}}^2 + \norm{u_1 - u_1^{\delta}}{H^{k-2}}^2 )\\
&\quad+\f{C\tilde{T}_0}{\sqrt{\delta}} R + C R^2.
\end{align*}
On the regularized level, we use the coarse estimate
\begin{align*}
\sup_{ t \in [0, \tilde{T}_0]}\big(\norm{  u^{\delta} -  v^{\delta}}{H^k}^2 + \norm{ u_t^{\delta} - v^{\delta}_t}{H^{k-2}}^2\big) 
&\leq \f{C}{\sqrt{\delta}}\tilde{T}_0( \norm{v_0^{\delta} - u_0^{\delta}}{H^{k}} + \norm{v_1^{\delta}-u_1^{\delta}}{H^{k-2}})\\
&\quad+C(\norm{u_0^{\delta} - v_0^{\delta}}{H^{k}}^2 + \norm{u_1^{\delta} - v_1^{\delta}}{H^{k-2}}^2)\\
&\leq \f{C\tilde{T}_0}{\sqrt{\delta}}R + CR^2.
\end{align*}
Since $u - v = u - u^{\delta} + u^{\delta} - v^{\delta} + v^{\delta} - v$, it follows
\begin{align}\label{est:pre-final}
\sup_{ t \in [0, \tilde{T}_0]}\big(\norm{  u -  v}{H^k}^2 + \norm{ u_t - v_t}{H^{k-2}}^2\big) 
&\leq \f{C\tilde{T}_0}{\sqrt{\delta}}( \norm{u_0 - u_0^{\delta}}{H^{k-1}} + \norm{u_1 - u_1^{\delta}}{H^{k-3}})\\
&\quad+ C (\norm{ u_0 - u^{\delta}_0}{H^{k}}^2 + \norm{u_1 - u^{\delta}_1}{H^{k-2}}^2)\notag\\
&\quad+\f{C\tilde{T}_0}{\sqrt{\delta}}R + C  R^2.\notag
\end{align}

Now take $\eta\in(0,\overline{C}/2]$ and $r_1\in (0,\eta]$. We first fix $\delta=\delta_1=\delta_1(r_1)\in(0,\delta_1']$ and then choose 
$R_1 =  R_1 (\delta_1) \in(0,R_0]$ such that for all
$R\in(0,R_1]$ we have
\begin{equation}\label{final-est}
\sup_{ t \in [0, \tilde{T}_0]}\big(\norm{  u -  v}{H^k}^2 + \norm{ u_t - v_t}{H^{k-2}}^2\big) \le r_1\le \eta.
\end{equation}
%

In the above reasoning we now replace $(u_0,u_1)$ with corresponding solution $u$ by data $(\hat{u}_0,\hat{u}_1)$ with  solution $\hat{u}$ that satisfy
the same assumptions as $(v_0,v_1)$. The function $\hat{u}$ thus fulfills the same a priori estimates as $v$ and also \eqref{final-est}. Moreover, we assume that
\begin{equation}\label{est:data-hat}
\|(\hat{u}_0,\hat{u}_1)- (v_0,v_1)\|_{H^k\times H^{k-2}}\le \hat{R}
\end{equation}
for some radius $\hat{R}>0$.
We can then repeat the above arguments replacing $u$ by $\hat{u}$. The resulting regularization parameter $\hat{\delta}_1$ depends on $\hat{u}$,
and thus also the upper bound
$\hat{R}_1 =  \hat{R}_1 (\delta_1)$ for the radii in \eqref{est:data-hat}. For given $0\le \hat{r}_1\le \hat{\eta}$, we  infer
\begin{equation}\label{final-est-hat}
\sup_{ t \in [0, \tilde{T}_0]}\big(\norm{  \hat{u} -  v}{H^k}^2 + \norm{ \hat{u}_t - v_t}{H^{k-2}}^2\big) \le \hat{r}_1\le \hat{\eta}
\end{equation}
provided that $0< \hat{R}\le \hat{R}_1$ in  \eqref{est:data-hat}.

\smallskip

\textit{Step~3.}
In the case $ \tilde{T}_0 \geq T_0 $ the proof is complete. Otherwise we repeat the same argument starting from 
\[ (u_{0}^{(1)}, u_{1}^{(1)})=   (u(\tilde{T}_0), u_t(\tilde{T}_0)) \qquad \text{and}\quad (v_{0}^{(1)}, v_{1}^{(1)})=   (v(\tilde{T}_0), v_t(\tilde{T}_0)).\]
Observe that  \eqref{final-est} yields
\[ \norm{(\nabla v_0^{(1)}, v_1^{(1)})}{H^{k-1} \times H^{k-2}} \leq \eta + \norm{(\nabla u_{0}^{(1)}, u_{1}^{(1)})}{H^{k-1} \times H^{k-2}} 
    \leq 3\overline{C}/2.\]
For a sufficiently small $\delta_2'\in(0,\delta^*]$ and all $\delta\in (0, \delta_2']$, we derive 
\[\norm{(\nabla  (u_0^{(1)})^{\delta}, (u_1^{(1)})^{\delta})}{H^{k-1}\times H^{k-2}}, 
  \norm{(\nabla  (v_0^{(1)})^{\delta}, (v_1^{(1)})^{\delta})}{H^{k-1}\times H^{k-2}} \leq 2\overline{C}\]
as in \eqref{est0} and \eqref{est2}.
Based on these bounds we can repeat the arguments of Steps 1 and 2 on the interval $[\tilde{T}_0,\min\{2\tilde{T}_0, T_0\}]=:J_1$. 
However we have to replace the bound \eqref{est:data} involving $R$ by \eqref{final-est} which yields
\[ \|(u_0^{(1)},u_1^{(1)})- (v_0^{(1)},v_1^{(1)})\|_{H^k\times H^{k-2}}\le r_1.\]
Let $r_2\in(0,\eta]$.  Lemma \ref{approximation-lemma} allows us to fix  a parameter $\delta=\delta_2=\delta_2(r_2)\in(0,\delta_2']$ such that 
\begin{align*}
&\f{C\tilde{T}_0}{\sqrt{\delta}}  \big(\|u_0^{(1)} - (u_0^{(1)})^{\delta}\|_{H^{k-1}}  +\|u_0^{(1)}  - (u_0^{(1)})^{\delta}\|_{H^{k-3}}\big)\\
&\qquad + C \big(\|u_0^{(1)} - (u_0^{(1)})^{\delta}\|_{H^{k}}  +\|u_0^{(1)}  - (u_0^{(1)})^{\delta}\|_{H^{k-2}}\big)  \le r_2/4.
\end{align*}
As in \eqref{est:pre-final} we then obtain
\begin{equation*}
\sup_{ t \in J_1}\big(\norm{  u -  v}{H^k}^2 + \norm{ u_t - v_t}{H^{k-2}}^2\big) 
    \le r_2/ 4 + r_2/4 + \f{C\tilde{T}_0}{\sqrt{\delta_2}}r_1 + C  r_1^2  \le r_2\le \eta
\end{equation*}
if we choose $r_1$, and hence $R$, small enough. 

Again we can argue in the same way for $\hat{u}$ instead of $u$, replacing $r_i$, $\delta_i$ and  $R$ by  $\hat{r}_i$, $\hat{\delta}_i$ and  $\hat{R}$.
For given  $0<\hat{r}_2\le \hat\eta$, we thus obtain
\begin{equation*}
\sup_{ t \in J_1}\big(\norm{  \hat{u} -  v}{H^k}^2 + \norm{ \hat{u}_t - v_t}{H^{k-2}}^2\big) 
    \le \hat{r}_2/ 4 + \hat{r}_2/4 + \f{C\tilde{T}_0}{\sqrt{\hat{\delta}_2}}\hat{r}_1 + C  \hat{r}_1^2  \le \hat{r}_2\le \hat\eta
\end{equation*}
if $\hat{r}_1$ and $\hat{R}$ are  small enough.

\smallskip

\textit{Step~4.}

The previous step  can be repeated $m$ times until $m\tilde{T}_0\ge T_0$. We set $R_0=R(\overline{C}/2)$ (with $\eta=\overline{C}/2$)
and use the resulting radius $\hat{R}=\hat{R}(\hat\eta)$ for the contunuity at $\hat{u}$,
concluding the proof of the continuous dependence and thus of Theorem~\ref{maintheorem}.

\appendix 

\section{Derivatives of the nonlinearity}\label{section-appendix}
In this section  we assume $u, v : \R^n \times[0,T] \to \R^L$  are smooth maps. The calculations hold if $ u$ and $v$ are sufficiently regular to apply the Leibniz formula (e.g. with weak derivatives in $L^2$).
Lemma \ref{Lemma-Leibniz} and the Leibniz formula imply the following substitution rule.
\begin{Lemma}\label{diff-Lemma}
	Let $ l \in \N$. Then we have
	\begin{align*}
	\nabla^l( \mathcal{N}(u))= J_1 + J_2 + J_3,
	\end{align*}
	where the terms  $J_1,$ $J_2,$ and $J_3$ are of the form (with $k_i, m_i \in \N_0$)
	\begin{align*}
	J_1 \!= \! \sum_{(*)}\!d^{j+1}\!P_u (\nabla^{m_1+1} u \!\star\! \dots\! \star\! \nabla^{m_j +  1}u )
	  [ \nabla^{k_1} u_t \!\star\! \nabla^{k_2}u_t + \nabla^{k_1 +2} u\! \star\! \nabla^{k_2 +2} u + \nabla^{k_1 +3} u \!\star\! \nabla^{k_2 +1} u ]
	\end{align*}
	with $ (*) : 0 \leq m \leq l,~ \sum_{i=1}^2 k_i = l -m,~ j = \min \{ 1,m\}, \dots, m,~\sum_{k=1}^j m_k = m -j$;
	\begin{align*}
	J_2 =  \sum_{(*)} d^{j+2}P_u (\nabla^{m_1+1} u \star \dots \star \nabla^{m_j +  1}u )
	     [  \nabla^{k_1 +1} u \star \nabla^{k_2 + 1} u \star \nabla^{k_3 + 2} u ]
	\end{align*}
	with $ (*) : 0 \leq m \leq l,~ \sum_{i=1}^3 k_i = l -m,~ j = \min \{ 1,m\}, \dots, m,~\sum_{k=1}^j m_k = m -j$;
	\begin{align*}
	J_3 =  \sum_{(*)} d^{j+3}P_u (\nabla^{m_1+1} u \star \dots \star \nabla^{m_j +  1}u )
	   [  \nabla^{k_1 +1} u \star \nabla^{k_2 + 1} u\star \nabla^{k_3 + 1} u \star \nabla^{k_4 + 1} u ]
	\end{align*}
	with $ (*) : 0 \leq m \leq l,~ \sum_{i=1}^4 k_i = l -m,~ j = \min \{ 1,m\}, \dots, m,~\sum_{k=1}^j m_k = m -j$.
\end{Lemma}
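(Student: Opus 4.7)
The plan is to apply the generalized Leibniz rule termwise to the expansion \eqref{expansion} of $\mathcal{N}(u)$ and to substitute the identity from Lemma \ref{Lemma-Leibniz} for every occurrence of $\nabla^m(d^sP_u)$. The seven summands of $\mathcal{N}(u)$ fall into three families according to the order of $d^sP_u$ that appears: one copy of $dP_u$ paired with two factors ($s=1$), one copy of $d^2P_u$ paired with three factors ($s=2$), and one copy of $d^3P_u$ paired with four factors ($s=3$). These will contribute to $J_1$, $J_2$ and $J_3$ respectively.

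First I would treat a generic summand of the form $d^sP_u(F_1,\dots,F_{s+1})$, where each $F_i$ is a fixed monomial derivative of $u$ or $u_t$ (for example $F_i=\Delta u=\nabla^2 u$, $F_i=\nabla\Delta u=\nabla^3 u$, or $F_i=u_t$). The generalized Leibniz rule yields
\begin{align*}
\nabla^l\bigl(d^sP_u(F_1,\dots,F_{s+1})\bigr)
= \sum_{m+\alpha_1+\dots+\alpha_{s+1}=l}\!\!\binom{l}{m,\alpha_1,\dots,\alpha_{s+1}}
 (\nabla^m d^sP_u)(\nabla^{\alpha_1}F_1,\dots,\nabla^{\alpha_{s+1}}F_{s+1}).
\end{align*}
Applying Lemma \ref{Lemma-Leibniz} to the factor $\nabla^m d^sP_u$ produces a sum of terms of the form $d^{j+s}P_u(\nabla^{m_1+1}u\star\cdots\star\nabla^{m_j+1}u)$ with $\sum_{k=1}^j m_k = m-j$ and $j\in\{\min\{1,m\},\dots,m\}$. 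Since $F_i$ is already a pure derivative of $u$ (or $u_t$), each $\nabla^{\alpha_i}F_i$ is again such a derivative, so after relabeling $k_i:=\alpha_i$ the whole expression lies in the advertised $\star$-monomial form, with degree constraint $\sum k_i=l-m$.

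Next I would collect the contributions by family. For $s=1$ the seven summands in \eqref{expansion} with a single $dP_u$ give precisely the three bracket shapes appearing in $J_1$: the pair $(u_t,u_t)$ yields $\nabla^{k_1}u_t\star\nabla^{k_2}u_t$; the pairs $(\Delta u,\Delta u)$ and $(\nabla^2u,\nabla^2u)$ both yield $\nabla^{k_1+2}u\star\nabla^{k_2+2}u$; the pair $(\nabla u,\nabla\Delta u)$ yields $\nabla^{k_1+1}u\star\nabla^{k_2+3}u$, which after swapping indices is the third bracket. For $s=2$ both summands $d^2P_u(\nabla u,\nabla u,\Delta u)$ and $d^2P_u(\nabla u,\nabla u,\nabla^2 u)$ yield the single bracket shape of $J_2$. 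For $s=3$ the unique quartic summand yields the bracket in $J_3$.

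The main (and essentially only) obstacle is the combinatorial bookkeeping: keeping track of the degree balance $\sum k_i + \sum m_k + j = l$ that translates into the two stated constraints $\sum k_i = l-m$ and $\sum_{k=1}^j m_k = m-j$, and verifying the range $j\in\{\min\{1,m\},\dots,m\}$, which handles the boundary case $m=0$ (no derivative falls on the coefficient, so no extra $\nabla^{m_k+1}u$ factor appears). All multinomial coefficients and numerical prefactors from \eqref{expansion} are absorbed into the $\star$-convention, and summing the contributions of all seven summands of $\mathcal{N}(u)$ gives the decomposition $\nabla^l\mathcal{N}(u)=J_1+J_2+J_3$ as claimed.
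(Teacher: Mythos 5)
Your proposal is correct and follows essentially the same route as the paper, which simply observes that the statement is a consequence of the Leibniz rule applied termwise to the expansion \eqref{expansion} together with Lemma \ref{Lemma-Leibniz} for $\nabla^m(d^sP_u)$; your bookkeeping of the constraints $\sum_i k_i=l-m$, $\sum_k m_k=m-j$ and of the boundary case $m=0$ (via the convention $j=\min\{1,m\},\dots,m$) fills in exactly the details the paper leaves implicit, with all constants absorbed into the $\star$-notation.
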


The following lemmata are used to prove the existence of a fixed point in Section \ref{ex} and the uniqueness result in Section \ref{uni}.
\begin{Lemma}\label{uniqueness-lemma1}
	Let $ m \in \N,$ $k \in \N_0,$  and $ w = u-v $. For $m \geq 2$ we have
	\begin{align}
	\nabla^m( d^kP_u - d^k P_v ) &= \sum_{j=1}^m \sum_{ m_1 + \dots + m_j = m-j} (d^{j+k}P_u -d^{j+k}P_v )( \nabla^{m_1 + 1}u, \dots, \nabla^{m_j +1}u)\\ \nonumber
	&~~ + \sum_{j=2}^m \sum_{ m_1 + \dots +  m_j = m-j} d^{j+k}P_v ( \nabla^{m_1 + 1}w, \nabla^{m_2 + 1}u,\dots, \nabla^{m_j +1}u)\\\nonumber
	&~~+ \sum_{j=2}^m \sum_{ m_1 + \dots +  m_j = m-j} d^{j+k}P_v ( \nabla^{m_1 + 1}v, \nabla^{m_2 + 1}w,\nabla^{m_3+1}u,\dots, \nabla^{m_j +1}u)\\\nonumber
	&\quad\colon~~\\\nonumber
	&~~+ \sum_{j=2}^m \sum_{ m_1 + \dots +  m_j = m-j} d^{j+k}P_v ( \nabla^{m_1 + 1}v,\dots, \nabla^{m_{j-1} +1}v,\nabla^{m_{j} +1}w ),
	\end{align}	
	and for $ m = 1$
	\begin{align}
	\nabla( d^kP_u - d^k P_v ) = (d^kP_u - d^kP_v)(\nabla u) + d^kP_v ( \nabla w).
	\end{align}	
\end{Lemma}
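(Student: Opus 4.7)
The plan is a bookkeeping argument combining Lemma \ref{Lemma-Leibniz} with a telescoping decomposition in the arguments of the multilinear forms. For $m \geq 2$, I would first apply Lemma \ref{Lemma-Leibniz} to $d^kP_u$ and $d^kP_v$ separately and subtract, obtaining
\begin{equation*}
\nabla^m(d^kP_u - d^kP_v) = \sum_{j=1}^m \sum_{\sum m_i = m-j} \left[ d^{j+k}P_u(\nabla^{m_1+1}u \star \cdots \star \nabla^{m_j+1}u) - d^{j+k}P_v(\nabla^{m_1+1}v \star \cdots \star \nabla^{m_j+1}v) \right].
\end{equation*}
Everything then reduces to splitting each inner difference into pieces of the required form.

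For each summand I would use the ``coefficient-plus-argument'' identity
\begin{align*}
d^{j+k}P_u(X_1, \ldots, X_j) - d^{j+k}P_v(Y_1, \ldots, Y_j) &= (d^{j+k}P_u - d^{j+k}P_v)(X_1, \ldots, X_j) \\
&\quad + d^{j+k}P_v(X_1, \ldots, X_j) - d^{j+k}P_v(Y_1, \ldots, Y_j),
\end{align*}
applied with $X_i = \nabla^{m_i+1}u$ and $Y_i = \nabla^{m_i+1}v$. The first piece yields the first of the four sums in the statement. For the remaining difference I would telescope slot by slot, using the multilinearity of $d^{j+k}P_v$ and the fact that $\nabla^{m_i+1}u - \nabla^{m_i+1}v = \nabla^{m_i+1}w$:
\begin{align*}
&d^{j+k}P_v(\nabla^{m_1+1}u, \ldots, \nabla^{m_j+1}u) - d^{j+k}P_v(\nabla^{m_1+1}v, \ldots, \nabla^{m_j+1}v) \\
&\quad = \sum_{i=1}^{j} d^{j+k}P_v(\nabla^{m_1+1}v, \ldots, \nabla^{m_{i-1}+1}v, \nabla^{m_i+1}w, \nabla^{m_{i+1}+1}u, \ldots, \nabla^{m_j+1}u).
\end{align*}
Sorting the resulting terms according to the position $i$ of the $w$-slot reproduces the second through last sums in the statement; these telescopes contain $j$ summands, so they only contribute when $j \geq 2$.

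For the case $m = 1$, the only admissible index pair in Lemma \ref{Lemma-Leibniz} is $(j, m_1) = (1, 0)$, whence $\nabla(d^kP_u) = d^{k+1}P_u(\nabla u)$, and the same coefficient-plus-argument splitting applied to the single slot produces the asserted one-dimensional identity directly. I expect the main obstacle to be purely combinatorial rather than analytic: once the inner telescope over $i$ has been absorbed into the outer summation over $(j, m_1, \ldots, m_j)$, one must verify that the admissible index ranges in each of the four groups match those written in the lemma. This verification is a straightforward reindexing since the constraint $\sum_{k=1}^j m_k = m-j$ is unaffected by the position of the $w$-slot, and no estimates or further structural facts about $P$ beyond multilinearity of $d^{j+k}P_v$ are required.
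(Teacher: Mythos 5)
Your route is the same as the paper's: the published proof is exactly "subtract the Leibniz expansions of $\nabla^m(d^kP_u)$ and $\nabla^m(d^kP_v)$ and then add and subtract intermediate terms," and your coefficient-plus-argument splitting followed by the slot-by-slot telescope is just that argument written out in detail. The multilinearity identity you use for the telescope is correct, and no further structure of $P$ is needed, so in substance the proof is fine.

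There is, however, one genuine flaw in your final bookkeeping. The claim that the telescoping sums "only contribute when $j \geq 2$" because they contain $j$ summands is a non sequitur: for $j=1$ the telescope consists of exactly one summand, $d^{1+k}P_v(\nabla^{m_1+1}w)$ with $m_1=m-1$, and this term does not vanish and is not absorbed anywhere else. A direct check at $m=2$, $k=0$ shows it is required: $\nabla^2(P_u-P_v)$ contains $dP_u(\nabla^2u)-dP_v(\nabla^2v)$, which equals $(dP_u-dP_v)(\nabla^2u)+dP_v(\nabla^2 w)$, so without the extra term $dP_v(\nabla^2w)$ the asserted identity fails. Your own treatment of the case $m=1$ already exhibits this one-slot telescope term $d P_v(\nabla w)$, so the statement for $m\ge 2$ must likewise include the $j=1$ contribution (equivalently, the second sum should run from $j=1$, with no trailing $u$-slots in that case); the printed lower limit $j=2$ should be read as a typographical slip rather than something your telescope can be made to reproduce. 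With that correction stated explicitly, your proof is complete and coincides with the paper's.
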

\begin{proof}
	The result follows from subtracting the expansion in Lemma \ref{Lemma-Leibniz} for $d^kP_v$  
	$$ \nabla^m ( d^kP_v) = \sum_{j=1}^m \sum_{ m_1 + \dots + m_j = m-j} d^{j+k}P_v(\nabla^{m_1 + 1}v \star \dots \star \nabla^{m_j +1}v),$$
	from the same expansion of $\nabla^m(d^kP_u)$. Then subsequently adding and subtracting the intermediate terms in the formula above gives the result.
\end{proof}

\begin{Corollary}\label{uniqueness-corollary1} 
Let $ m \in \N,$ $k \in \N_0,$  and $ w = u-v $. Then we have
	\begin{align*}
	\nabla^{m}&\left[( dP_u - dP_v)( u_t \cdot u_t + \nabla^2u \star \nabla^2u + \nabla^3 u \star \nabla u)\right]\\ \nonumber
	&=~\sum_{(*)} (d^{j+1}P_u -d^{j+1}P_v )( \nabla^{m_1 + 1}u, \dots, \nabla^{m_j +1}u)( \nabla^{k_1}u_t \star \nabla^{k_2} u_t\\
	&\hspace{4cm} + \nabla^{k_1+2}u \star \nabla^{k_2+2} u + \nabla^{k_1+3}u \star \nabla^{k_2} u)\\ \nonumber
	&~~+ \sum_{(**)} d^{j+1}P_v ( \nabla^{m_1 + 1}w, \nabla^{m_2 + 1}u,\dots, \nabla^{m_j +1}u)( \nabla^{k_1}u_t \star \nabla^{k_2} u_t\\
	&\hspace{4cm} + \nabla^{k_1+2}u \star \nabla^{k_2+2} u + \nabla^{k_1+3}u \star \nabla^{k_2} u)\\
	&~~\colon~~\\
	&~~+ \sum_{(**)} d^{j+1}P_v ( \nabla^{m_1 + 1}v,\dots, \nabla^{m_{j-1} +1}v, \nabla^{m_{j} +1}w)( \nabla^{k_1}u_t \star \nabla^{k_2} u_t\\
	&\hspace{4cm} + \nabla^{k_1+2}u \star \nabla^{k_2+2} u + \nabla^{k_1+3}u \star \nabla^{k_2} u),~
	\end{align*}
	where $ (*) : j = 1,  \dots ,m~\text{and}~ m_1 + \dots + m_j + k_1 + k_2 = m - j,$ and
	$(**) : j = 2,  \dots ,m~\text{and}~ m_1 + \dots + m_j + k_1 + k_2 = m - j$. Likewise we have
	\begin{align*}
	\nabla^{m}&\left[( d^2P_u - d^2P_v)( \nabla u \star \nabla u \star \nabla^2u)\right]\\ \nonumber
	&=~\sum_{(*)} (d^{j+2}P_u -d^{j+2}P_v )( \nabla^{m_1 + 1}u, \dots, \nabla^{m_j +1}u)( \nabla^{k_1+1}u \star \nabla^{k_2+1} u \star \nabla^{k_3+2}u)\\ \nonumber
	&~~+ \sum_{(**)} d^{j+2}P_v ( \nabla^{m_1 + 1}w, \nabla^{m_2 + 1}u,\dots, \nabla^{m_j +1}u)( \nabla^{k_1+1}u \star \nabla^{k_2+1} u \star \nabla^{k_3+2}u)\\
	&~~\colon~~\\
	&~~+ \sum_{(**)} d^{j+2}P_v ( \nabla^{m_1 + 1}v, \dots, \nabla^{m_{j-1} +1}v,\nabla^{m_{j} +1}w )( \nabla^{k_1+1}u \star \nabla^{k_2+1} u \star \nabla^{k_3+2}u)
	\end{align*}
	where $ (*) : j = 1,  \dots ,m~\text{and}~m_1 + \dots + m_j + k_1 + k_2 + k_3 = m - j,$ and $(**) : j = 2,  \dots ,m~\text{and}~ m_1 + \dots + m_j + k_1 + k_2 + k_3 = m - j$. Further
	\begin{align*}
	&\nabla^{m}\left[( d^3P_u - d^3P_v)( \nabla u \star \nabla u \star \nabla u\star \nabla u)\right]\\ \nonumber
	&=\sum_{(*)} (d^{j+3}P_u -d^{j+3}P_v )( \nabla^{m_1 + 1}u, \dots, \nabla^{m_j +1}u)( \nabla^{k_1 +1} u \star \nabla^{k_2 +1} u \star \nabla^{k_3 +1} u\star \nabla^{k_4 +1} u)\\ \nonumber
	&\quad+ \sum_{(**)} d^{j+3}P_v ( \nabla^{m_1 + 1}w, \nabla^{m_2 + 1}u,\dots, \nabla^{m_j +1}u)( \nabla^{k_1 +1} u \star \nabla^{k_2 +1} u \star \nabla^{k_3 +1} u\star \nabla^{k_4 +1} u)\\
	&\qquad \colon~~\\
	&\quad+ \sum_{(**)} d^{j+3}P_v ( \nabla^{m_1 + 1}v,\dots, \nabla^{m_{j-1} +1}v,\nabla^{m_{j} +1}w )( \nabla^{k_1 +1} u \star \nabla^{k_2 +1} u \star \nabla^{k_3 +1} u\star \nabla^{k_4 +1} u)
	\end{align*}
where we sum over $ (*) : j = 1,  \dots ,m~\text{and}~ m_1 + \dots + m_j + k_1 + k_2 + k_3 + k_4 = m - j,~ (**) : j = 2,  \dots ,m~\text{and}~ m_1 + \dots + m_j + k_1 + k_2 + k_3 + k_4 = m - j.$\\[5pt]
Also, the case $m = 1 $ is similar.
\end{Corollary}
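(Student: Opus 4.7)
The three identities have identical structure: each is the $m$-th derivative of a product of a coefficient difference $d^{k}P_u - d^{k}P_v$ (for $k=1,2,3$) and a multilinear expression $F(u)$ that depends only on $u$ (the bracket $\mathcal{F}_1 := u_t\cdot u_t + \nabla^2 u\star\nabla^2 u+\nabla^3 u\star\nabla u$, respectively $\mathcal{F}_2 := \nabla u\star\nabla u\star\nabla^2 u$ and $\mathcal{F}_3 := \nabla u\star\nabla u\star\nabla u\star\nabla u$). The plan is therefore to combine the Leibniz product rule with Lemma~\ref{uniqueness-lemma1}. I present the argument for the first identity; the other two are identical apart from the choice of bracket and the value of $k$.

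First I apply the Leibniz formula to the product $(dP_u-dP_v)\cdot \mathcal{F}_1$ and split the $m$ derivatives between the coefficient and the bracket:
\[
\nabla^m\!\left[(dP_u-dP_v)\,\mathcal{F}_1\right]
   \;=\; \sum_{m_0+m'=m}\binom{m}{m_0}\, \nabla^{m_0}(dP_u-dP_v)\,\star\,\nabla^{m'}\mathcal{F}_1.
\]
Next I expand $\nabla^{m'}\mathcal{F}_1$ by a second Leibniz expansion, which produces precisely the three bracket types displayed in the Corollary, namely $\nabla^{k_1}u_t\star\nabla^{k_2}u_t$, $\nabla^{k_1+2}u\star\nabla^{k_2+2}u$, and $\nabla^{k_1+3}u\star\nabla^{k_2+1}u$, with $k_1+k_2=m'$.

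The heart of the proof is the expansion of the coefficient factor $\nabla^{m_0}(dP_u-dP_v)$, which I read off from Lemma~\ref{uniqueness-lemma1} with $k=1$: each such term is either a genuine difference $(d^{j+1}P_u-d^{j+1}P_v)(\nabla^{m_1+1}u,\dots,\nabla^{m_j+1}u)$ with $m_1+\dots+m_j=m_0-j$, or a telescoping intermediate of the form $d^{j+1}P_v(\nabla^{m_1+1}v,\dots,\nabla^{m_{i-1}+1}v,\nabla^{m_i+1}w,\nabla^{m_{i+1}+1}u,\dots,\nabla^{m_j+1}u)$, ranging over all positions $i\in\{1,\dots,j\}$ at which $w=u-v$ appears. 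Substituting this into the Leibniz splitting above and setting $k_1+k_2=m-m_0 = m-j-(m_1+\dots+m_j)$ yields exactly the two index conditions $(*)$ and $(**)$ in the statement. The case $m=1$ is handled by the short explicit formula at the end of Lemma~\ref{uniqueness-lemma1}, which gives one genuine difference term plus one $w$-term. Carrying out exactly the same procedure with $\mathcal{F}_2$ (using $k=2$ in Lemma~\ref{uniqueness-lemma1}) and $\mathcal{F}_3$ (using $k=3$) yields the remaining two identities.

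The proof requires no new ideas beyond Lemma~\ref{uniqueness-lemma1} and the Leibniz rule; the only real work is the bookkeeping of the multi-indices, and in particular keeping the position $i$ of the $w$-slot separate from the multi-index $(m_1,\dots,m_j)$. The compact notation ``$\colon$'' in the statement precisely reflects summation over this position $i=1,\dots,j$, and the constraint $j\ge 2$ on the $w$-terms arises because $j=1$ in Lemma~\ref{uniqueness-lemma1} contributes only to the difference-type term. No estimate or analytic step is involved, so there is no genuine obstacle; the only risk is a mis-indexing when combining the two Leibniz expansions, which is avoided by first fixing $m_0$ and $j$ and only afterwards expanding the bracket.
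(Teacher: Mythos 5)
Your proposal is correct and takes exactly the paper's route: the paper's entire proof is the remark that the identities follow from the Leibniz rule combined with Lemma \ref{uniqueness-lemma1}, which is precisely the two-step expansion (split the $m$ derivatives between the coefficient difference and the bracket, then expand the differentiated coefficient by Lemma \ref{uniqueness-lemma1} with $k=1,2,3$) that you carry out. Your bookkeeping of the index constraints matches the statement, and the remaining remarks (position of the $w$-slot, the $m=1$ case) just mirror the conventions already fixed in Lemma \ref{uniqueness-lemma1}, so nothing more is required.
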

\begin{proof}
The assertions are consequences of the Leibniz rule and  Lemma \ref{uniqueness-lemma1}.
\end{proof}

\begin{Corollary}\label{uniqueness-corollary2}
	We have for $ m \in \N,~m \geq 2$ and $ w = u-v$ that
	$$ \nabla^m ( \mathcal{N}(u) - \mathcal{N}(v)) $$
	is a linear combination of terms of the form
	\begin{align*}
	&(d^{j+1}P_u -d^{j+1}P_v )( \nabla^{m_1 + 1}u, \dots, \nabla^{m_j +1}u)( \nabla^{k_1}u_t \star \nabla^{k_2} u_t\\
	&\hspace{6cm}+ \nabla^{k_1+2}u \star \nabla^{k_2+2} u + \nabla^{k_1+3}u \star \nabla^{k_2} u),\\[4pt] 
	&d^{j+1}P_v ( \nabla^{m_1 + 1}w, \nabla^{m_2 + 1}h_1,\dots, \nabla^{m_j +1}h_{j-1})( \nabla^{k_1}u_t \star \nabla^{k_2} u_t\\
	&\hspace{6cm} + \nabla^{k_1+2}u \star \nabla^{k_2+2} u+ \nabla^{k_1+3}u \star \nabla^{k_2} u),\\[10pt]
	&(d^{j+2}P_u -d^{j+2}P_v )( \nabla^{m_1 + 1}u, \dots, \nabla^{m_j +1}u)( \nabla^{k_1+1}u \star \nabla^{k_2+1} u \star \nabla^{k_3+2}u),\\[4pt] 
	&d^{j+2}P_v ( \nabla^{m_1 + 1}w, \nabla^{m_2 + 1}h_1,\dots, \nabla^{m_j +1}h_{j-1})( \nabla^{k_1+1}u \star \nabla^{k_2+1} u \star \nabla^{k_3+2}u),\\[10pt]
	&(d^{j+3}P_u -d^{j+3}P_v )( \nabla^{m_1 + 1}u, \dots, \nabla^{m_j +1}u)( \nabla^{k_1 +1} u \star \nabla^{k_2 +1} u \star \nabla^{k_3 +1} u\star \nabla^{k_4 +1} u),\\[4pt] 
	&d^{j+3}P_v ( \nabla^{m_1 + 1}w, \nabla^{m_2 + 1}h_1,\dots, \nabla^{m_j +1}h_{j-1})( \nabla^{k_1 +1} u \star \nabla^{k_2 +1} u \star \nabla^{k_3 +1} u\star \nabla^{k_4 +1} u),~~~\text{and}\\[14pt]
	&d^{j+1}P_v(\nabla^{m_1 + 1}v, \dots, \nabla^{m_j +1}v)(\nabla^{k_1}w_t \star \nabla^{k_2}h_t  + \nabla^{k_1+2}w \star \nabla^{k_2+2} h\\ 
	& ~~~ + \nabla^{k_1+3}w \star \nabla^{k_2} h + \nabla^{k_1+3}h \star \nabla^{k_2} w ),~~ h \in \{ u, v\},\\[10pt]
	&~~d^{j+2}P_v(\nabla^{m_1 + 1}v, \dots, \nabla^{m_j +1}v)(\nabla^{k_1+1}w \star \nabla^{k_2+1}h_1 \star \nabla^{k_3 +2}h_2\\
	& \hspace{6cm}  + \nabla^{k_1+1}h_1 \star \nabla^{k_2+1}h_2 \star \nabla^{k_3 +2}w),\\[10pt]
	&~~d^{j+3}P_v(\nabla^{m_1 + 1}v, \dots, \nabla^{m_j +1}v)(\nabla^{k_1+1}w \star \nabla^{k_2+1}h_1 \star \nabla^{k_3 +1}h_2 \star \nabla^{k_4 +1}h_3),
	\end{align*}
	where $ j, k_1, k_2, k_3, k_4 $,~$ m_1, \dots m_j$ and $h, h_1, \dots, h_{j-1} \in \{ u, v\} $ are as above in Corollary \ref{uniqueness-corollary1}. 
	Also, we have a similar (but simpler) statement for $ m = 1$.
\end{Corollary}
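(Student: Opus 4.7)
The plan is to expand $\mathcal{N}(u) - \mathcal{N}(v)$ summand by summand using the explicit formula \eqref{expansion} for $\mathcal{N}$, then to apply $\nabla^m$ and Leibniz, and finally to match every resulting term against the catalog stated in the Corollary. Throughout I write $w = u-v$.

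First, I would treat each of the seven summands of $\mathcal{N}(u)$ separately. Each such summand has the schematic form $d^jP_u(X_1^u, \dots, X_r^u)$ with $r \in \{2,3,4\}$ and $j \in \{1,2,3\}$, where every $X_\ell^u$ is one of $u_t, \nabla u, \nabla^2 u, \nabla^3 u$, or (in the case of $d^{j+1}P_u(u_t,u_t)$) a time derivative. For each such summand I apply the telescoping identity
\begin{align*}
d^jP_u(X_1^u, \dots, X_r^u) - d^jP_v(X_1^v, \dots, X_r^v)
&= (d^jP_u - d^jP_v)(X_1^u, \dots, X_r^u) \\
&\quad + \sum_{\ell=1}^r d^jP_v(X_1^v, \dots, X_{\ell-1}^v, X_\ell^u - X_\ell^v, X_{\ell+1}^u, \dots, X_r^u).
\end{align*}
Since the $X_\ell$ are partial derivatives of $u$ or of $v$, every difference $X_\ell^u - X_\ell^v$ is a partial derivative of $w$ (or of $w_t$).

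Next I would apply $\nabla^m$ to each of these terms. For the first group, with $d^jP_u - d^jP_v$ in front, Corollary \ref{uniqueness-corollary1} delivers exactly the two kinds of expressions that appear in the claim: either a $(d^{j+k}P_u - d^{j+k}P_v)$-factor against a string of $\nabla^{m_i+1}u$, or a $d^{j+k}P_v$-factor against a string in which precisely one $\nabla^{m_i+1}u$ has been replaced by $\nabla^{m_i+1}w$, with the remaining arguments pure in $v$ (before the colon) or pure in $u$ (after the colon). For the second group, the coefficient $d^jP_v$ carries no $u/v$-difference, so applying $\nabla^m$ via the Leibniz rule produces two types of contributions: derivatives falling on $d^jP_v$ give (by Lemma \ref{Lemma-Leibniz}) the factor $d^{j+k}P_v(\nabla^{m_1+1}v \star \dots \star \nabla^{m_k+1}v)$ that appears in the last three lines of the Corollary, while derivatives falling on the outer multilinear arguments distribute among them by Leibniz to yield the corresponding $\star$-products in which exactly one factor is a derivative of $w$ (or $w_t$).

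Finally I would collect the contributions from all seven summands of $\mathcal{N}$, verify the index constraints $\sum m_i + \sum k_\ell = m - j$ by counting derivatives, and observe that every produced expression is listed among the eight patterns of the statement. The case $m=1$ is simpler and uses only the one-line identity in Lemma \ref{uniqueness-lemma1}. The main obstacle is not conceptual but purely bookkeeping: keeping track, for all five outer-argument structures $u_t \star u_t$, $\nabla^2 u \star \nabla^2 u$, $\nabla^3 u \star \nabla u$, $\nabla u \star \nabla u \star \nabla^2 u$, and $\nabla u \star \nabla u \star \nabla u \star \nabla u$, of which factor the $w$-derivative lands on in the telescoping, and ensuring that no term escapes the listed patterns. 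No new idea beyond Lemma \ref{uniqueness-lemma1} and Corollary \ref{uniqueness-corollary1} is needed.
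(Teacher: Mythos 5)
Your proposal is correct and matches the paper's own argument: the paper likewise telescopes $\mathcal{N}(u)-\mathcal{N}(v)$ into coefficient differences $d^jP_u-d^jP_v$ with pure-$u$ arguments (then applies Corollary \ref{uniqueness-corollary1}) plus $d^jP_v$ acting on argument strings with exactly one factor replaced by a derivative of $w$ or $w_t$ (then applies Lemma \ref{Lemma-Leibniz} and the Leibniz rule). The only work remaining in both versions is the same bookkeeping of indices, so no gap.
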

\begin{proof}
	We write, according to the definition of $ \mathcal{N}(u) $ in \eqref{expansion},
	\begin{align*}
	\mathcal{N}(u) - \mathcal{N}(v)& = (dP_u - dP_v)(u_t \cdot u_ t + \nabla^2 u \star \nabla^2 u +  \nabla^3 u \star \nabla u)\\
	& + (d^2P_u - d^2 P_v)(\nabla u \star \nabla u \star \nabla^2u) +  (d^3P_u - d^3P_v) (\nabla u \star \nabla u \star \nabla u \star \nabla u)\\
	& + dP_v(w_t \cdot u_t + v_t \cdot w_t + \nabla w \star \nabla u + \nabla v \star \nabla w + \nabla^3 w \star \nabla u + \nabla^3 v \star \nabla w)\\
	& + d^2P_v(\nabla w \star \nabla u \star \nabla^2 u + \nabla v \star \nabla w \star \nabla^2 u + \nabla v \star \nabla v \star \nabla^2 w)\\
	& + d^3P_v(\nabla w \star \nabla u \star \nabla u \star \nabla u + \nabla v \star \nabla w \star \nabla u \star \nabla u\\
	&~~~~~ +  \nabla v \star \nabla v \star \nabla w \star \nabla u + \nabla v \star \nabla v \star \nabla v \star \nabla w).
	\end{align*}
	Then, we use Corollary \ref{uniqueness-corollary1} for the first three terms in the sum above. For the latter three, we use Lemma \ref{Lemma-Leibniz} and the Leibniz rule.
      \end{proof}
 
 Let $ \varepsilon \in  (0,1)$. We recall from \eqref{Reg-expansion}  the  definition
\begin{align*}
\mathcal{N}_{\varepsilon}(u) =\mathcal{N}(u) - \varepsilon d^2 P_u ( u_t, \nabla u, \nabla u) - \varepsilon 2 dP_u(\nabla u_t, \nabla u) - \varepsilon dP_u( u_t, \Delta u).
\end{align*} 
\begin{Lemma}\label{remark-parabolic-nonlin}
For $ m \in \N_0$ the derivative $ \nabla^m(\mathcal{N}_{\varepsilon}(u))$  compared to $ \nabla^m(\mathcal{N}(u)) $ contains the  additional terms
\begin{align*}
&d^{j+1}P_u (\nabla^{m_1+1} u \star \dots \star \nabla^{m_j +  1}u )( \nabla^{k_1 } u_t \star \nabla^{k_2 + 2} u 
  + \nabla^{k_1 + 1} u_t  \star \nabla^{k_2+1} u ),~~\text{and}\\
& d^{j+2}P_u (\nabla^{m_1+1} u \star \dots \star \nabla^{m_j +  1}u )(  \nabla^{k_1 } u_t \star \nabla^{k_2 + 1} u \star \nabla^{k_3 + 1} u ),
\end{align*} 
with $ j,m_1, \dots, m_j,k_1,k_2,k_3$ similarly to Lemma \ref{diff-Lemma}.

Further $ \nabla^m(\mathcal{N}_{\varepsilon}(u)) - \nabla^m(\mathcal{N}_{\varepsilon}(v))$ compared to $ \nabla^m(\mathcal{N}(u)) - \nabla^m(\mathcal{N}(v)) $ 
contains additional terms of the form
\begin{align*}
&(d^{j+1}P_u -d^{j+1}P_v )( \nabla^{m_1 + 1}u, \dots, \nabla^{m_j +1}u)( \nabla^{k_1}u_t \star \nabla^{k_2+2} u + \nabla^{k_1+1}u_t \star \nabla^{k_2+1} u),\\[4pt]
&d^{j+1}P_v ( \nabla^{m_1 + 1}w, \nabla^{m_2 + 1}h_1,\dots, \nabla^{m_j +1}h_{j-1})( \nabla^{k_1}u_t \star \nabla^{k_2+2} u + \nabla^{k_1+1}u_t \star \nabla^{k_2+1} u),\\[10pt]
&(d^{j+2}P_u -d^{j+2}P_v )( \nabla^{m_1 + 1}u, \dots, \nabla^{m_j +1}u)( \nabla^{k_1}u_t \star \nabla^{k_2+1} u \star \nabla^{k_3+1}u),\\[4pt]
&d^{j+2}P_v ( \nabla^{m_1 + 1}w, \nabla^{m_2 + 1}h_1,\dots, \nabla^{m_j +1}h_{j-1})( \nabla^{k_1}u_t \star \nabla^{k_2+1} u \star \nabla^{k_3+1}u),~~~\text{and}\\[10pt]
&d^{j+1}P_v(\nabla^{m_1 + 1}v, \dots, \nabla^{m_j +1}v)(\nabla^{k_1}w_t \star \nabla^{k_2+2}h  + \nabla^{k_1+1}w_t \star \nabla^{k_2+1} h\\
&~~~ + \nabla^{k_1}h \star \nabla^{k_2+2} w + \nabla^{k_1+1}h_t \star \nabla^{k_2+1} w ),~~ h \in \{ u, v\},\\[10pt]
&~~d^{j+2}P_v(\nabla^{m_1 + 1}v, \dots, \nabla^{m_j +1}v)(\nabla^{k_1}w_t \star \nabla^{k_2+1}h_1 \star \nabla^{k_3 +1}h_2\\
&\hspace{6cm}  + \nabla^{k_1}(h_1)_t \star \nabla^{k_2+1}h_2 \star \nabla^{k_3 +1}w),
\end{align*}
with  $ w = u-v $ and  $ j, m_1,\dots, m_j, k_1,k_2, k_3, h_1, \dots, h_{j-1} $  similarly to Corollary \ref{uniqueness-corollary2}.
\end{Lemma}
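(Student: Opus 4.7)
The proof is essentially a bookkeeping exercise based on the Leibniz rule together with Lemma \ref{Lemma-Leibniz}, Lemma \ref{uniqueness-lemma1} and Corollaries \ref{uniqueness-corollary1}, \ref{uniqueness-corollary2}; nothing conceptually new is required beyond what was done for $\mathcal{N}(u)$ in Lemma \ref{diff-Lemma} and for $\mathcal{N}(u)-\mathcal{N}(v)$ in Corollary \ref{uniqueness-corollary2}. The plan is to apply the same procedure to the three extra summands
\[
-\varepsilon d^2 P_u ( u_t, \nabla u, \nabla u),\qquad -2\varepsilon dP_u(\nabla u_t, \nabla u),\qquad -\varepsilon dP_u( u_t, \Delta u)
\]
that distinguish $\mathcal{N}_\varepsilon(u)$ from $\mathcal{N}(u)$, and match each resulting term with one of the two families in the statement.

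First I would compute $\nabla^m$ of each of the three extra summands. By the Leibniz rule, derivatives split between the coefficient $dP_u$ (or $d^2P_u$) and the remaining factors $u_t$, $\nabla u_t$, $\nabla u$, $\Delta u$. Applying Lemma \ref{Lemma-Leibniz} to the coefficient converts $\nabla^p(d^l P_u)$ into a sum of the shape $d^{j+l}P_u(\nabla^{m_1+1}u\star\dots\star \nabla^{m_j+1}u)$ with $\sum m_k=p-j$. Reading off the remaining partial derivatives gives the three signatures for the bracket: $\nabla^{k_1}u_t\star \nabla^{k_2+2}u$ from $dP_u(u_t,\Delta u)$, $\nabla^{k_1+1}u_t\star \nabla^{k_2+1}u$ from $dP_u(\nabla u_t,\nabla u)$, and $\nabla^{k_1}u_t\star \nabla^{k_2+1}u\star \nabla^{k_3+1}u$ from $d^2P_u(u_t,\nabla u,\nabla u)$. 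Combining the constraints $\sum m_k+\sum k_i=m-j$ with the usual range $j=\min\{1,m\},\dots,m$ yields exactly the first group of terms claimed in the lemma. The factor $\varepsilon$ is harmless (it appears as an overall constant), so it is absorbed into the $\star$-convention.

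For the second claim I would subtract the corresponding expressions at $u$ and $v$ and write each of the three differences in the telescoped form
\[
dP_u(X(u))-dP_v(X(v))=(dP_u-dP_v)(X(u))+dP_v(X(u)-X(v)),
\]
and analogously for $d^2P_u$; here $X$ stands for the relevant multilinear expression in $(u_t,\nabla u)$, $(\nabla u_t,\nabla u)$ or $(u_t,\nabla u,\nabla u)$. For the first summand I apply $\nabla^m$ and Lemma \ref{uniqueness-lemma1} to $\nabla^p(dP_u-dP_v)$, which reproduces the lines starting with $(d^{j+1}P_u-d^{j+1}P_v)$ and $d^{j+1}P_v(\nabla^{m_1+1}w,\nabla^{m_2+1}h_1,\dots)$ (and analogously with $d^{j+2}P$) exactly as in Corollary \ref{uniqueness-corollary2}. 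For the second summand I expand $X(u)-X(v)$ multilinearly, which produces the differences $w_t=u_t-v_t$ and $\nabla^2 w$ in the appropriate slots; applying $\nabla^m$ and Lemma \ref{Lemma-Leibniz} to $\nabla^p(dP_v)$ then gives precisely the last two families of terms in the statement, with $h,h_1,h_2\in\{u,v\}$.

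The only mildly delicate point is organisational: one has to make sure that all instances of $u_t$, $\nabla u_t$ and $w_t$ appear with the same conventions as in Corollary \ref{uniqueness-corollary2} so that the final list can be stated uniformly. This is just a matter of relabelling the indices $k_1,k_2,k_3$ in each of the three cases coming from $d^2P_u(u_t,\nabla u,\nabla u)$, $dP_u(\nabla u_t,\nabla u)$ and $dP_u(u_t,\Delta u)$; no estimate is involved. I do not foresee a real obstacle — the argument is parallel to the derivation of Lemma \ref{diff-Lemma} and Corollary \ref{uniqueness-corollary2}, and the extra $\varepsilon$-terms are of exactly the same type, differing only by the presence of one time derivative in place of one spatial derivative.
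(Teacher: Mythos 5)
Your proposal is correct and follows essentially the route the paper intends: the paper states this lemma without a separate proof, since it follows by exactly the bookkeeping you describe — Leibniz rule plus Lemma \ref{Lemma-Leibniz} for the three extra $\varepsilon$-terms, and for the differences the same telescoping $(dP_u-dP_v)(X(u))+dP_v(X(u)-X(v))$ combined with Lemma \ref{uniqueness-lemma1}, as in the proofs of Corollaries \ref{uniqueness-corollary1} and \ref{uniqueness-corollary2}. Your identification of the bracket signatures coming from $dP_u(u_t,\Delta u)$, $dP_u(\nabla u_t,\nabla u)$ and $d^2P_u(u_t,\nabla u,\nabla u)$ matches the statement.
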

The implicit constants may depend on $\varepsilon$ here.
\section{Approximation of the initial data}\label{section-appendix2}
In this section we construct certain approximations of initial data in order to conclude continuous dependence of the solution 
on the initial data. As in the previous sections, take functions $ u_0 , u_1 : \R^n \to \R^L$ with $ u_0 \in N$,  $u_1 \in T_{u_0}N $ a.e. on $ \R^n $, and
$$ (\nabla u_0 , u_1 ) \in H^{k-1}(\R^n) \times H^{k-2}(\R^n).$$
 for some $ k > \lfloor \f{n}{2} \rfloor +2$ with $ k \in \N $.
\begin{Lemma}\label{approximation-lemma}
	Let the functions $ (u_0, u_1) $ be as above. Then there is a number $\delta^*=\delta^*(N)>0$ such that for
	$ \delta \in( 0,\delta*] $ there exist maps $ u_0^{\delta}, u_1^{\delta} \in C^{\infty}( \R^n, \R^L) $ such that  $ \nabla u_0^{\delta}, u_1^{\delta} \in H^m$ for all $m\in\N$,
	$ u_0^{\delta} \in N$ and $u_1^{\delta} \in T_{u_0^{\delta}}N$ on $ \R^n$ which satisfy
	\begin{align}
	 u_0 - u_0^{\delta} \in L^2\quad \text{and}\quad \norm{u_0 - u_0^{\delta}}{L^2} &\le C_0 \delta,\label{nullapp}\\
	\norm{(\nabla u_0^{\delta}, u_1^{\delta})- (\nabla u_0, u_1)}{H^{k-2}\times H^{k-3}} &= o(\sqrt{\delta})\quad \text{as }~\delta \to 0^+, \label{einsapp}\\
	\norm{(\nabla u_0^{\delta}, u_1^{\delta})- (\nabla u_0, u_1)}{H^{k-1}\times H^{k-2}} &= o(1) \quad\text{as}~\delta \to 0^+, \label{zweiapp}\\
	\norm{(\nabla u_0^{\delta}, u_1^{\delta})}{H^{k}\times H^{k-1}} &\le C_0 \f{1}{\sqrt{\delta}}\label{dreiapp}
	\end{align}
	for a constant $ C_0 = C_0(\norm{P_p}{C^k_b}, \norm{\nabla u_0}{H^{k-1}}, \norm{u_1}{H^{k-2}}) \ge1$.
	Further let $ (v_0, v_1) $ be as above with $  u_0- v_0 \in H^k(\R^n)$ and
	$$ \norm{(u_0, u_1) - (v_0, v_1)}{H^{k}\times H^{k-2}}\leq R$$
	for some $R>0$. Then for $ \delta \in( 0,\delta^*] $ we have
	\begin{align}
	\norm{(\nabla v_0^{\delta}, v_1^{\delta})}{H^{k}\times H^{k-1}} &\leq C_0( 1 + R^k) \f{1}{\sqrt{\delta}},\label{vierapp}\\
	\norm{(u_0^{\delta}, u_1^{\delta}) -  (v_0^{\delta}, v_1^{\delta})}{H^k\times H^{k-2}} 
	&\leq C_0 (1 + R^k) \norm{(u_0, u_1) -  (v_0, v_1)}{H^k\times H^{k-2}}.\label{fünfapp}
	\end{align}
\end{Lemma}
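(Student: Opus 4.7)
The construction I would use is the mollify-then-project scheme adapted to the geometric setting. Let $\phi\in C_c^\infty(\R^n)$ be a nonnegative mollifier with $\int\phi=1$ and set $\phi_\epsilon(x)=\epsilon^{-n}\phi(x/\epsilon)$ with $\epsilon=\epsilon(\delta)$ a scale to be chosen. Define the raw smooth approximations $\tilde u_0^{\delta}:=\phi_\epsilon*u_0$ and $\tilde u_1^{\delta}:=\phi_\epsilon*u_1$. Since $\nabla u_0\in H^{k-1}\hookrightarrow L^\infty$ by $k-1>\lfloor n/2\rfloor+1$, the map $u_0$ is globally Lipschitz, so $\|\tilde u_0^{\delta}-u_0\|_{L^\infty}\le C_\phi\,\epsilon\,\|\nabla u_0\|_{L^\infty}<\delta_0/2$ whenever $\delta$ (and thus $\epsilon$) is below a threshold $\delta^*=\delta^*(N)$. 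Hence $\pi(\tilde u_0^{\delta})$ is well-defined and I set
\[ u_0^{\delta}:=\pi(\tilde u_0^{\delta}),\qquad u_1^{\delta}:=P_{u_0^{\delta}}(\tilde u_1^{\delta}). \]
By construction $u_0^{\delta}(x)\in N$ and $u_1^{\delta}(x)\in T_{u_0^{\delta}(x)}N$ for every $x$. The $C^\infty$-smoothness of $u_0^{\delta}$ and the $H^m$-regularity of $\nabla u_0^{\delta},u_1^{\delta}$ for every $m\in\N$ follow from the gain of regularity provided by $\phi_\epsilon$ together with the $C^\infty_b$-regularity of $\pi$ and $P$ on a neighbourhood of $N$.

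For the bounds \eqref{nullapp}--\eqref{zweiapp} the decisive point is the exact identity $\pi(u_0)=u_0$ a.e., which gives
\[ u_0-u_0^{\delta}=\pi(u_0)-\pi(\tilde u_0^{\delta})=\int_0^1 d\pi_{(1-s)u_0+s\tilde u_0^{\delta}}(u_0-\tilde u_0^{\delta})\,ds \]
pointwise. Combined with the boundedness of $d\pi$ on a $\delta_0$-tube of $N$ and the standard mollifier identity $\|f-\phi_\epsilon*f\|_{L^2}\le C\epsilon\|\nabla f\|_{L^2}$ applied to $f=u_0$, this yields \eqref{nullapp}. For the gradient-level estimates one differentiates and expands
\[ \nabla u_0^{\delta}-\nabla u_0=(d\pi_{\tilde u_0^{\delta}}-d\pi_{u_0})(\nabla u_0)+d\pi_{\tilde u_0^{\delta}}(\nabla\tilde u_0^{\delta}-\nabla u_0), \]
and iterates using Lemma \ref{Lemma-Leibniz}. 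The first kind of term is absorbed into a Moser estimate (Lemma \ref{Moser-estimate}) via $\|\tilde u_0^{\delta}-u_0\|_{L^\infty}\to 0$; the second kind is controlled by the two mollifier estimates $\|\phi_\epsilon*\nabla u_0-\nabla u_0\|_{H^{k-2}}\lesssim\epsilon\|\nabla u_0\|_{H^{k-1}}$ (from $|\hat\phi(\epsilon\xi)-1|\lesssim\epsilon|\xi|$) and $\|\phi_\epsilon*\nabla u_0-\nabla u_0\|_{H^{k-1}}=o(1)$ (from dominated convergence in Fourier space). For \eqref{dreiapp} one invokes the frequency-gain $\|\phi_\epsilon*f\|_{H^k}\lesssim\epsilon^{-1}\|f\|_{H^{k-1}}$ with $f=\nabla u_0$ and applies the same Moser/chain rule to $\nabla u_0^{\delta}=d\pi_{\tilde u_0^{\delta}}(\nabla\tilde u_0^{\delta})$. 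The corresponding estimates for $u_1^{\delta}$ are obtained in the same way after writing
\[ u_1-u_1^{\delta}=P_{u_0}(u_1)-P_{u_0^{\delta}}(\tilde u_1^{\delta})=(P_{u_0}-P_{u_0^{\delta}})(u_1)+P_{u_0^{\delta}}(u_1-\tilde u_1^{\delta}). \]

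The extensions \eqref{vierapp}--\eqref{fünfapp} to a second pair $(v_0,v_1)$ with $u_0-v_0\in H^k$ and $\|(u_0,u_1)-(v_0,v_1)\|_{H^k\times H^{k-2}}\le R$ follow the same pattern applied to differences. I would expand
\[ u_0^{\delta}-v_0^{\delta}=\int_0^1 d\pi_{(1-s)\tilde v_0^{\delta}+s\tilde u_0^{\delta}}(\tilde u_0^{\delta}-\tilde v_0^{\delta})\,ds, \]
differentiate, and apply Lemma \ref{Lemma-Leibniz} together with Lemma \ref{Moser-estimate}; the polynomial factor $1+R^k$ in \eqref{fünfapp} absorbs the powers of $\|\nabla u_0\|_{H^{k-1}}+\|\nabla v_0\|_{H^{k-1}}$ that appear from the Moser chain-rule. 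The principal technical difficulty is the Bona--Smith balance in the choice $\epsilon=\epsilon(\delta)$: one needs $\epsilon$ small enough for the low-regularity rates in \eqref{nullapp}--\eqref{einsapp} and large enough for the high-regularity bound \eqref{dreiapp}. A suitable power scaling $\epsilon=\epsilon(\delta)$, combined if necessary with a mollifier carrying additional vanishing moments to sharpen the $L^2$-rate, then delivers the exponents stated, in the spirit of the Bona--Smith schemes of \cite{Tzvetkov,erdogan_tzirakis}.
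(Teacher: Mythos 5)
Your construction is the same mollify-then-project scheme as the paper's: the paper takes the heat kernel $\eta_\delta$ (so spatial scale $\sqrt{\delta}$), sets $u_0^{\delta}=\pi(u_0*\eta_\delta)$, $u_1^{\delta}=P_{u_0*\eta_\delta}(u_1*\eta_\delta)$, and runs exactly the decompositions you propose (split off $(P_{u_0*\eta_\delta}-P_{u_0})$, Leibniz via Lemma \ref{Lemma-Leibniz}, Lemma \ref{uniqueness-lemma1}, Moser, interpolation), including the same treatment of the $v$-data and of differences for \eqref{vierapp} and the final Lipschitz-type bound. So structurally you are aligned with the paper.

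The gap is quantitative, and it sits at the heart of the lemma: the exponents in \eqref{nullapp}, \eqref{einsapp}, \eqref{dreiapp} require a specific balance that your stated estimates do not deliver. You invoke the first-order rate $\|u_0-\phi_\epsilon*u_0\|_{L^2}\lesssim\epsilon\|\nabla u_0\|_{L^2}$ for \eqref{nullapp}, which forces $\epsilon\lesssim\delta$; but then the smoothing bound $\|\phi_\epsilon*f\|_{H^{k}}\lesssim\epsilon^{-1}\|f\|_{H^{k-1}}$ gives $\delta^{-1}$, not the $\delta^{-1/2}$ of \eqref{dreiapp}. Conversely, $\epsilon\sim\sqrt{\delta}$ fixes \eqref{dreiapp} but your $L^2$ rate then only gives $\sqrt{\delta}$ instead of $C_0\delta$, and your $O(\epsilon)$ bound in $H^{k-2}$ gives $O(\sqrt{\delta})$, not the $o(\sqrt{\delta})$ of \eqref{einsapp}. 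The paper resolves this automatically through the semigroup identity $u_0*\eta_\delta-u_0=\int_0^{\delta}(\Delta u_0)*\eta_s\,ds$, which yields $\|u_0*\eta_\delta-u_0\|_{L^2}\lesssim\delta\|\Delta u_0\|_{L^2}$ (a second-order rate at spatial scale $\sqrt{\delta}$), together with $\|\delta^{-1}(u_0*\eta_\delta-u_0)\|_{L^\infty}\to\|\Delta u_0\|_{L^\infty}$-type bounds for the $(P_{u_0*\eta_\delta}-P_{u_0})$ terms, and a little-Hölder/interpolation argument (Lunardi) to upgrade $O(\sqrt{\delta})$ to $o(\sqrt{\delta})$ in $H^{k-2}$ for data in $H^{k-1}$. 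Your closing remark about "vanishing moments and a suitable power scaling" points at exactly this fix (take $\epsilon=\sqrt{\delta}$, a symmetric mollifier so that $\|f-\phi_\epsilon*f\|_{L^2}\lesssim\epsilon^2\|\nabla^2 f\|_{L^2}$, and a dominated-convergence refinement to get $o(\epsilon)$ in $H^{k-2}$, not only at the $H^{k-1}$ level where you invoke it), but as written this decisive step is deferred rather than proved, and without it the stated exponents — which are what the Bona--Smith argument in Section \ref{conti} actually consumes — are not established.
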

\begin{proof}
We choose the caloric extension for regularization, i.e., we consider $ \eta_{\delta} * u_0$ and $\eta_{\delta} * u_1 $ where
	$$ \eta_{\delta}(x) = (4\pi\delta)^{-\f{n}{2}} e^{- \f{|x|^2}{4\delta }},\quad \delta > 0,~~ x \in \R^n,$$
and  $T(\delta)f = \eta_{\delta}*f$	is the heat semigroup. Since $ u_1 \in C^0_b(\R^n)$ and $ u_0 \in C^2_b(\R^n) $ by assumption, the convolution 
is well defined for $ u_0 $ and $ u_1 $. Moreover, $ \eta_{\delta} * u_0$ tends to $u_0$ and $\eta_{\delta} * u_1$ to $u_1 $ uniformly as $ \delta \to 0^+$, as well as
$$  \nabla(\eta_{\delta} * u_0) \to \nabla u_0~~\text{ in}~ H^{k-1}(\R^n),\qquad \eta_{\delta} * u_1 \to u_1 ~~\text{in}~H^{k-2}(\R^n)\qquad\text{as}~\delta \to 0^+.$$
The uniform convergence yields
	\begin{align}
	\dist(u_0*\eta_{\delta}(x) , N) \leq  | u_0*\eta_{\delta}(x) - u_0(x) | \to 0\qquad \text{as }~\delta \to 0^+
	\end{align}
	uniformly in $ x \in \R^n$. Hence, if $\delta > 0 $ is small enough we can define 
	$$ u_0^{\delta} : = \pi(u_0 * \eta_{\delta})\quad\text{ and }\quad u_1^{\delta} : = P_{u_0* \eta_{\delta}}(u_1 * \eta_{\delta}).$$
	Recall that $ \pi$ is the nearest point map and that $ P_{u_0* \eta_{\delta}}(u_1 * \eta_{\delta}) \in T_{u_0^{\delta}}N $ by definition of the projector $P$ and $ u_0^{\delta}$.  Especially we have
	\begin{align*}
	| u_0^{\delta}(x) - u_0 * \eta_{\delta}(x) | &= \dist( u_0* \eta_{\delta}(x), N)\leq | u_0(x) - u_0 *\eta_{\delta}(x) |, \\ 
	 | u_0^{\delta}(x) - u_0(x) | &\leq 2| u_0(x) - u_0 *\eta_{\delta}(x) |
	 \end{align*}
for $x \in \R^n.$	We further note that $ u_0^{\delta}$ and $u_1^{\delta} $ are smooth maps and that we have the uniform convergence
	$$ u_0^{\delta} \to u_0, \qquad  u_1^{\delta} \to u_1$$
	as $\delta \to 0^+$ by construction of $ u_0^{\delta} $ (and the mean value theorem for $ u_1^{\delta}$).
	Assertion  \eqref{nullapp} follows from
	$$ \norm{\delta^{-1}( u_0 *\eta_{\delta} - u_0) }{L^2 } = \norm{\f{1}{\delta} \int_0^{\delta}(\Delta u_0) *\eta_{s}\,ds }{L^2} \lesssim \norm{\Delta u_0 }{L^2},$$
	by Young's inequality for the convolution. Since $ \nabla u_0^{\delta} = P_{u_0*\eta_{\delta}}((\nabla u_0) * \eta_{\delta})$, we further
	have to treat the  terms 
	\begin{align*}
	P_{u_0*\eta_{\delta}}((\nabla u_0) * \eta_{\delta}) - \nabla u_0 &= P_{u_0*\eta_{\delta}}((\nabla u_0) * \eta_{\delta} - \nabla u_0 ) 
	 + (P_{u_0*\eta_{\delta}} - P_{u_0}) \nabla u_0,\\
	P_{u_0*\eta_{\delta}}(u_1 * \eta_{\delta}) - u_1 &= P_{u_0*\eta_{\delta}}( u_1 * \eta_{\delta} - u_1 ) + (P_{u_0*\eta_{\delta}} - P_{u_0}) u_1.
	\end{align*}
	We start by estimating (by means of the mean value theorem for $P$)
	\begin{align*}
	\norm{P_{u_0*\eta_{\delta}}((\nabla u_0) * \eta_{\delta}) - \nabla u_0}{L^2} &\leq \norm{P_{u_0*\eta_{\delta}}((\nabla u_0) * \eta_{\delta} - \nabla u_0 )}{L^2} + \norm{(P_{u_0*\eta_{\delta}} - P_{u_0}) \nabla u_0}{L^2}\\
	&\lesssim  \delta\left( O(1) + \norm{\nabla u_0}{L^2} \norm{\f{1}{\delta}( u_0 * \eta_{\delta} - u_0)}{L^{\infty}}\right), 
	\end{align*}
	where $ \f{1}{\delta}( u_0 * \eta_{\delta} - u_0) \to \Delta u_0 $ uniformly as $ \delta \to 0^+$ since $ u_0 \in C^2_b(\R^n)$. 
	Similarly, employing Lemmas \ref{Lemma-Leibniz}, \ref{Moser-estimate} and \ref{uniqueness-lemma1} as before, we see
	\begin{align*}
	&\norm{\nabla^{k-2}(P_{u_0*\eta_{\delta}}((\nabla u_0) * \eta_{\delta}) - \nabla u_0)}{L^2}\\
	&\lesssim \sum_{l_1 + l_2 = k-2}\!\Big[\norm{\nabla^{l_1}(P_{u_0*\eta_{\delta}}) \cdot \nabla^{l_2}((\nabla u_0) * \eta_{\delta} - \nabla u_0 )}{L^2} 
	 + \norm{\nabla^{l_1}(P_{u_0*\eta_{\delta}} - P_{u_0}) \cdot \nabla^{l_2 + 1} u_0}{L^2}\Big]\\
	&\lesssim ( 1+ \norm{\nabla u_0}{H^{k-2}}^{k} + \norm{(\nabla u_0) * \eta_{\delta} }{H^{k-2}}^{k}) \norm{(\nabla u_0) * \eta_{\delta} - \nabla u_0 }{H^{k-2}}\\
	&\quad + \delta \norm{\nabla^{k-1}u_0}{L^2}^k\norm{\delta^{-1}( u_0*\eta_{\delta} - u_0)}{L^{\infty}}\\
	&\lesssim o(\sqrt{\delta})\qquad\text{as }~ \delta \to 0^+.
	\end{align*}
	Here we also use \cite[Prop. 2.2.4]{lunardisemigroup}. Interpolation and an analogous argument for $ u_1^{\delta}$ in $H^{k-3}$ then allows us  
	to conclude  \eqref{einsapp}. Assertion \eqref{zweiapp} is shown in the same way, with $ o(1)$ instead of  $ o(\sqrt{\delta}) $ in the upper bound.
	For \eqref{dreiapp}, we compute 
	\begin{align*}
	&\norm{\nabla^{k}(P_{u_0*\eta_{\delta}}((\nabla u_0) * \eta_{\delta}))}{L^2}\\ 
	&\quad\lesssim \sum_{\substack{l_1 + l_2 = k\\ l_1 > 0 }}\norm{\nabla^{l_1}(P_{u_0*\eta_{\delta}}) \cdot( \nabla^{l_2+1} u_0 * \eta_{\delta})}{L^2} 
	   + \norm{P_{u_0*\eta_{\delta}} \nabla ( \nabla^{k} u_0 * \eta_{\delta})}{L^2}\\
	&\quad \lesssim (1 + \norm{\nabla u_0}{H^{k-1}}^k)\norm{\nabla u_0}{H^{k-1}} + \norm{P_{u_0*\eta_{\delta}} \nabla ( \nabla^{k} u_0 * \eta_{\delta})}{L^2}
	\end{align*}
	as before. The last  term is bounded via
	$$ \norm{P_{u_0*\eta_{\delta}} \nabla ( \nabla^{k} u_0 * \eta_{\delta})}{L^2} 
	   \lesssim \norm{ (\nabla^{k} u_0) * \nabla(\eta_{\delta})}{L^2} \lesssim \f{1}{\sqrt{\delta}}\norm{\nabla u_0}{H^{k-1}}$$
	again by Young's inequality. Similarly, the term  $\nabla^{k-1}
        u_1^{\delta}$ is estimated in $L^2( \R^n)$. The above reasoning also shows \eqref{vierapp} 
	if we choose the constant $ C_0 > 0$ suitably. In order to prove \eqref{fünfapp}, similarly as above 
	we compute
	$$ \norm{u_0^ {\delta}- v_0^{\delta}}{L^2} \lesssim \norm{\eta_{\delta}*( u_0 - v_0)}{L^2} \lesssim \norm{u_0 - v_0}{L^2}.$$
	by the mean value theorem and Young's inequality. Writing
	\begin{align*}
	P_{u_0*\eta_{\delta}}((\nabla u_0)& * \eta_{\delta}) - P_{v_0*\eta_{\delta}}((\nabla v_0) * \eta_{\delta})\\
 & = P_{u_0*\eta_{\delta}}((\nabla u_0) * \eta_{\delta} - (\nabla v_0) * \eta_{\delta})  + (P_{u_0*\eta_{\delta}}- P_{v_0*\eta_{\delta}})((\nabla v_0) * \eta_{\delta}),
	\end{align*}
	we deduce
	\begin{align*}
	\|\nabla^{k-1}&(P_{u_0*\eta_{\delta}}((\nabla u_0) * \eta_{\delta}) - P_{v_0*\eta_{\delta}}((\nabla v_0) * \eta_{\delta}))\|_{L^2}\\
	&\lesssim \sum_{l_1 + l_2 = k-1}\norm{\nabla^{l_1}(P_{u_0*\eta_{\delta}}) \cdot \nabla^{l_2}((\nabla u_0) * \eta_{\delta} - (\nabla v_0)*\eta_{\delta} )}{L^2}\\
	&\quad+ \sum_{l_1 + l_2 = k-1}\norm{\nabla^{l_1}(P_{u_0*\eta_{\delta}}- P_{v_0*\eta_{\delta}}) \cdot (\nabla^{l_2 + 1} v_0) * \eta_{\delta}}{L^2}\\
	&\lesssim ( 1+ \norm{\nabla u_0}{H^{k-1}}^{k} + \norm{\nabla v_0  }{H^{k-1}}^{k}) \norm{\nabla u_0 - \nabla v_0 }{H^{k-1}} 
	+  \norm{\nabla^{k}v_0}{L^2}^k\norm{ u_0 - v_0}{L^{\infty}}\\
	&\lesssim ( 1 + \norm{\nabla u_0}{H^{k-1}}^{k} + R^k) \norm{\nabla u_0 - \nabla v_0 }{H^{k-1}} + \norm{\nabla^{k}v_0}{L^2}^k\norm{ u_0 - v_0}{H^{k}}\\
	&\lesssim ( 1 + \norm{\nabla u_0}{H^{k-1}}^{k} + R^k) \norm{\nabla u_0 - \nabla v_0 }{H^{k-1}},
	\end{align*}
	The claim \eqref{fünfapp} then follows by interpolation and a proper choice of $ C_0 > 0 $. Finally the estimate for
	$$ u_1^{\delta} - v_1^{\delta} = P_{u_0*\eta_{\delta}}(u_1 * \eta_{\delta} - v_1 * \eta_{\delta})  
	   + (P_{u_0*\eta_{\delta}}- P_{v_0*\eta_{\delta}})(v_1 * \eta_{\delta})$$
	works similarly.
\end{proof}

\section{Establishing the identity \eqref{identity-final}}\label{appendix-identity-final}
For $ f,g \in H^1(\R^n)$, $ h \in \R $ and $ i \in \{ 1, \dots , n\}$ we set
$$ D_h^if(x) = \f{1}{h}( f(x + e_ih ) - f(x)). $$
Observe that $ D_h^i(fg)(x) = (D_h^if)(x) g(x + e_ih) + f(x) (D_h^ig)(x)$. Since we only use the product rule integrated over 
$ x \in \R^n $ and $ g( \cdot + he_i) \to g $ strongly in $H^1$ as $h \to 0$, we drop the $ h$-dependence in $ g(\cdot + e_i h)$ in the following calculation.

Let $u$ be the solution of \eqref{expansion} obtained in Section~\ref{main-theorem-proof}. We compute
\begin{align*}
&\f{1}{2}\f{d}{dt}\left( \norm{D_h^i\nabla^{k-3} u_t}{L^2}^2 + \norm{D_h^i\nabla^{k-1} u}{L^2}^2\right)
  = \int_{\R^n} D_h^i\nabla^{k-3} \Big((I- P_u) \mathcal{N}(u) \Big)\cdot D_h^i\nabla^{k-3} u_t \,dx\\
&= \sum_{l=1}^{k-3}\int_{\R^n}D_h^i( \nabla^l(I-P_u) \star\nabla^{k-3-l}\mathcal{N}(u)) \cdot D_h^i\nabla^{k-3}u_t\,dx \\
 &\quad  + \int_{\R^n}D_h^i(I-P_u) \nabla^{k-3}\mathcal{N}(u) \cdot D_h^i\nabla^{k-3}u_t\,dx
  + \int_{\R^n} D_h^i\nabla^{k-3}\mathcal{N}(u) \cdot (I-P_u) D_h^i\nabla^{k-3}u_t\,dx\\
&= \sum_{l=1}^{k-3}\int_{\R^n}D_h^i( \nabla^l(I-P_u) \star\nabla^{k-3-l}\mathcal{N}(u)) \cdot D_h^i\nabla^{k-3}u_t\,dx \\
&\quad  + \int_{\R^n} \! D_h^i(I-P_u) \nabla^{k-3}\mathcal{N}(u) \cdot D_h^i\nabla^{k-3}u_t\,dx
 + \int_{\R^n} \! \! D_h^i(\nabla^{k-3}\mathcal{N}(u) \cdot (I-P_u) D_h^i\nabla^{k-3}u_t)\,dx \\
&\quad  + \int_{\R^n} \nabla^{k-3}\mathcal{N}(u) \cdot D_h^i(D_h^i(I-P_u) \nabla^{k-3}u_t)\,dx\\
&\quad + \sum_{l=1}^{k-3} \int_{\R^n} \nabla^{k-3}\mathcal{N}(u) \cdot (D_h^i)^2(\nabla^{l}(I-P_u)\star \nabla^{k-3-l}u_t)\,dx = : \int_{\R^n} T_h^i(u)\,dx,
\end{align*}
where the second identity follows from $ (I - P_u) u_t = 0$.
For a fixed time $ t \in [0, T_m)$, the regularity of $u$ yields the limit
\begin{align*}
\lim_{h \to 0} \int_{\R^n} T_h^i(u(t))\,dx 
&=  \sum_{l=1}^{k-3}\int_{\R^n}\partial_{x_i}( \nabla^l(I-P_u) \star \nabla^{k-3-l}\mathcal{N}(u)) \cdot \nabla^{k-3}\partial_{x_i}u_t\,dx\\ 
&\quad-\int_{\R^n}dP_u(\partial_{x_i}u,\nabla^{k-3}\mathcal{N}(u)) \cdot \nabla^{k-3}\partial_{x_i}u_t\,dx\\
&\quad - \int_{\R^n} \nabla^{k-3}\mathcal{N}(u) \cdot \partial_{x_i}\big(dP_u(\partial_{x_i}u,\nabla^{k-3}u_t)\big)\,dx\\
&\quad + \sum_{l=1}^{k-3} \int_{\R^n} \nabla^{k-3}\mathcal{N}(u) \cdot \partial_{x_i}^2(\nabla^{l}(I-P_u)\star \nabla^{k-3-l}u_t)\,dx\\
& =:  \int_{\R^n} T^i(u(t))\,dx.
\end{align*}
Here we also used that
$$  \int_{\R^n} D_h^i(\nabla^{k-3}\mathcal{N}(u) \cdot (I-P_u) D_h^i\nabla^{k-3}u_t)\,dx  \to 0 \quad \text{as}~~h \to 0 $$
by Gauss' Theorem. Estimating as in Section \ref{uniform-energy}, we derive 
\begin{align*}
\left| \int_{\R^n} T^i(u(t))\,dx\right|  \lesssim \sup_{s \in [0,T]}( 1 + \norm{\nabla u(s)}{H^{k-1}}^{2k} + \norm{u_t(s)}{H^{k-2}}^{2k}  )
    (\norm{\nabla u(s)}{H^{k-1}}^{2} + \norm{u_t(s)}{H^{k-2}}^{2}  ).
\end{align*}
for $ t \in [0,T]$ and $T<T_m$. In the limit $ h \to 0$ it follows 
\begin{align*}
\norm{\nabla^{k-3} \partial_{x_i}\!u_t}{L^2}^2 + \norm{\nabla^{k-1} \partial_{x_i}\!u}{L^2}^2 
   &=  2\! \!\int_0^t\!\!\int_{\R^n}\!\!\! T^i(u(s))\,dx\,ds + \norm{\nabla^{k-3} \partial_{x_i}\!u_1}{L^2}^2 + \norm{\nabla^{k-1} \partial_{x_i}\!u_0}{L^2}^2
\end{align*}
by dominated convergence. The right-hand side is continuous in $t$, and hence the highest derivatives $\nabla^{k} u_t, \nabla^{k-2}u:[0, T_m)\to L^2$ 
are continuous, since we already know their weak continuity.
Finally, summing over $i =1, \dots, n$ and estimating $ T^i(u) $ as in Section \ref{uniform-energy}, 
we conclude the blow-up criterion from \eqref{blowup} for the solution $u$.

\bibliographystyle{amsplain}

\begin{thebibliography}{1}

\bibitem{bejenaru_et_al}
Ioan Bejenaru, Alexandru D. Ionescu, Carlos E. Kenig and Daniel Tataru.
\newblock Global {S}chr\"{o}dinger maps in dimensions {$d\geq 2$}: small
data in the critical {S}obolev spaces.
\newblock {\em Ann. of Math. (2)}, 173(3): 1443--1506, 2011.

\bibitem{courant_hilbert}
Richard Courant and David Hilbert.
\newblock {\em Methods of mathematical physics. {V}ol. {I}.}
\newblock Interscience Publishers, Inc., New York, N.Y., 1953.



\bibitem{denkschnaubelt}
Robert Denk and Roland Schnaubelt.
\newblock A structurally damped plate equation with {D}irichlet-{N}eumann
boundary conditions.
\newblock {\em J. Differential Equations}, 259(4):1323--1353, 2015.

\bibitem{erdogan_tzirakis}
M.~Burak Erdogan and Nikolaos Tzirakis.
\newblock {\em Dispersive Partial Differential Equations: Wellposedness and
	Applications}.
\newblock Cambridge University Press, 2016.

\bibitem{fan2010regularity}
Jishan Fan and Tohru Ozawa.
\newblock On regularity criterion for the 2{D} wave maps and the 4{D}
biharmonic wave maps.
\newblock In {\em Current advances in nonlinear analysis and related topics},
volume~32 of {\em GAKUTO Internat. Ser. Math. Sci. Appl.}, pages 69--83.
Gakkotosho, Tokyo, 2010.

\bibitem{LammSchnaubeltHerr}
Sebastian {Herr}, Tobias {Lamm}, and Roland {Schnaubelt},
\newblock Biharmonic wave maps into spheres.
\newblock to appear in {\em Proc. Amer. Math. Soc.}


\bibitem{kenig2010cauchy}
Carlos Kenig, Tobias Lamm, Daniel Pollack, Gigliola Staffilani, and Tatiana
Toro.
\newblock The {C}auchy problem for {S}chr\"{o}dinger flows into {K}\"{a}hler
manifolds.
\newblock {\em Discrete Contin. Dyn. Syst.}, 27(2):389--439, 2010.

\bibitem{lasieckacontrol}
Irena Lasiecka and Roberto Triggiani.
\newblock {\em Control theory for partial differential equations: continuous
	and approximation theories. {I}}.
\newblock Cambridge University Press, Cambridge, 2000.

\bibitem{lunardisemigroup}
Alessandra Lunardi.
\newblock {\em Analytic semigroups and optimal regularity in parabolic}.
\newblock Birkh\"{a}user/ Springer Basel AG, Basel, 1995.

\bibitem{lunardiinterpolation}
Alessandra Lunardi.
\newblock {\em Interpolation theory}.
\newblock Edizioni della Normale, Pisa, 2018.
\newblock Third edition.

\bibitem{nahmod_et_al}
Andrea Nahmod, Atanas Stefanov and Karen Uhlenbeck.
\newblock On {S}chr\"{o}dinger maps.
\newblock {\em Comm. Pure Appl. Math.}, 56(1): 114--151, 2003.

\bibitem{pausader}
Beno\^{\i}t Pausader.
\newblock Scattering and the {L}evandosky-{S}trauss conjecture for
fourth-order nonlinear wave equations.
\newblock {\em J. Differential Equations}, 241(2): 237--278, 2007

\bibitem{schmid}
Tobias Schmid.
\newblock Energy bounds for biharmonic wave maps in low dimensions.
\newblock {\em CRC 1173-Preprint} 2018/51, Karlsruhe Institute of Technology, 2018.


\bibitem{shatahstruwe}
Jalal Shatah and Michael Struwe.
\newblock {\em Geometric wave equations}.
\newblock American Mathematical Society, Providence, RI, 1998.


\bibitem{sogge}
Christopher~D. Sogge.
\newblock {\em Lectures on non-linear wave equations}.
\newblock International Press, Boston, MA, second edition, 2008.


\bibitem{sterbenz-tataru}
Jacob Sterbenz and Daniel Tataru.
\newblock Regularity of wave-maps in dimension {$2+1$}.
\newblock {\em Comm. Math. Phys.}, 298(1): 231--264, 2010.

\bibitem{tao1}
Terence Tao.
\newblock Global regularity of wave maps. {I}. {S}mall critical
{S}obolev norm in high dimension.
\newblock {\em Internat. Math. Res. Notices}, No. 6: 299--328, 2001.


\bibitem{tao2}
Terence Tao.
\newblock Global regularity of wave maps. {II}. {S}mall energy in two
dimensions.
\newblock {\em Comm. Math. Phys.}, 224(2): 443--544, 2001.

\bibitem{tataru}
Daniel Tataru.
\newblock The wave maps equation.
\newblock {\em Bull. Amer. Math. Soc. (N.S.)}, 41(2): 185--204, 2004.

\bibitem{taylor3}
Michael~E. Taylor.
\newblock {\em Partial differential equations {III}. {N}onlinear equations}.
\newblock Springer, New York, second edition, 2011.

\bibitem{Tzvetkov}
Nikolay Tzvetkov.
\newblock Ill-posedness issues for nonlinear dispersive equations.
\newblock In {\em Lectures on nonlinear dispersive equations}, volume~27 of
{\em GAKUTO Internat. Ser. Math. Sci. Appl.}, pages 63--103. Gakkotosho,
Tokyo, 2006.

\end{thebibliography}

\end{document}